\tikzset{>=stealth}
\newtheorem{intro-thm}{Theorem}[]
\theoremstyle{plain}
\newtheorem{thm}{Theorem}[section]
\newtheorem{theorem}[thm]{Theorem}
\newtheorem{lemma}[thm]{Lemma}
\newtheorem{corollary}[thm]{Corollary}
\newtheorem{proposition}[thm]{Proposition}
\theoremstyle{definition}
\newtheorem{remark}[thm]{Remark}
\newtheorem{conjecture}[thm]{Conjecture}
\newtheorem{question}[thm]{Question}
\newtheorem{condition}[thm]{Condition}
\newtheorem{definition}[thm]{Definition}
\newtheorem{example}[thm]{Example}
\renewcommand{\tilde}{\widetilde}
\newcommand{\sX}{{\mathcal X}}
\title[$\mathbb{A}^1$-Connected components and characterisation of $\mathbb{A}^2$]{$\mathbb{A}^1$-Connected components and characterisation of $\mathbb{A}^2$}
\author{Utsav Choudhury}
\address{Indian Statistical Institute\\ Stat Math Unit\\203 B.T. Road \\ Kolkata - 700108\\ India}
\email{utsav@isical.ac.in}
\author{Biman Roy}
\address{Indian Statistical Institute\\ Stat Math Unit\\203 B.T. Road \\ Kolkata - 700108\\ India}
\email{bimanroy31@gmail.com}
\keywords{$\mathbb{A}^1$-homotopy theory, Affine Algebraic Geometry, Zariski Cancellation}
\subjclass[2010]{Primary 14F42}
\begin{document}
\maketitle
\begin{abstract}
In this article we prove that any $\mathbb{A}^1$-contractible smooth complex surface is isomorphic as a variety to $\mathbb{C}^2$. We show that the $\mathbb{A}^1$-connected component of a variety $X$ contains the information about $\mathbb{A}^1$-s in $X$.
\end{abstract}
\tableofcontents
\section{Introduction}
 A natural problem in affine algebraic geometry is to determine whether a given affine variety $X$ is isomorphic to the affine $n$-space $\mathbb{A}^n_k$ over a field $k$. Several invariants are used to answer this question. For instance, a complex affine variety isomorphic to $\mathbb{A}^n_{\mathbb{C}}$ is topologically contractible, has trivial Picard group and has trivial group of units. Ramanujam, in his fundamental work \cite{ra}, discovered that a non-singular complex algebraic surface is isomorphic to $\mathbb{A}^2_{\mathbb{C}}$ if and only if it is topologically contractible and simply connected at infinity. On the other hand, Miyanishi gave characterisation of $\mathbb{A}^2_{\mathbb{C}}$ using $\mathbb{G}_a$- action \cite[Theorem 1]{mi}. Thus, the invariants coming from the $\mathbb{G}_a$-actions, such as Makar-Limanov invariant as well as topological contractibility play an important role in algebraic characterisation of the affine spaces. This article arose from an attempt to relate these invariants.

We will use the bridge of $\mathbb{A}^1$-homotopy theory to establish a relation between algebra and topology. Let $k$ be a field and $Sm/k$ be the category of smooth finite type $k$-schemes. The unstable motivic homotopy category, denoted by $\mathbf{H}(k)$, is the category constructed by Morel-Voevodsky, where the homotopies are parametrized by the affine line $\mathbb{A}^1$ (see Appendix \ref{app} for the construction and relevant properties). Roughly speaking, $\mathbf{H}(k)$ is the most natural way of attaching simplicial homotopy and inverting the projection map  $pr : X \times \mathbb{A}^1 \to X$ for any smooth $k$-scheme $X$. Of course, during this process several other morphisms get inverted. A smooth $k$-scheme $X$ (more generally a space $\mathcal{X}$)  is called $\mathbb{A}^1$-contractible if the natural map $X \to Spec \ k$ (resp. $\mathcal{X} \to Spec \ k$) is an isomorphism in $\mathbf{H}(k)$. For example, $\mathbb{A}^n_k$-s are $\mathbb{A}^1$-contractible. Note that over $\mathbb{C}$, $\mathbb{A}^1$-contractibility of a smooth $\mathbb{C}$-scheme implies topological contractibility of the complex manifold $X(\mathbb{C})$. \cite[Lemma 2.5.]{ad}.\par

\subsection{$\mathbb{A}^1$-contractible smooth $k$-schemes}
Let us first consider the topological side. In dimension $1$ and $2$, the real line and the real plane are the only topologically contractible open manifolds. Whitehead first constructed topologically contractible open manifold of dimension $3$ which is not homeomorphic to $\mathbb{R}^3$ \cite{white}. In fact for every $n \geq 3$, there are infinitely many pairwise non-homeomorphic topologically contractible open manifolds of dimension $n$ (\cite{mcmillan}, \cite{mazur}, \cite{po}, \cite{curtis}, \cite{gl}).  

The base field $k$ is henceforth assumed to be of characteristic zero. In dimension $1$, $\mathbb{A}^1$ is the only $\mathbb{A}^1$-contractible smooth scheme \cite[Claim 5.7]{ad}. On the other hand, for every $n \geq 4$, Asok and Doran constructed infinitely many pairwise non-isomorphic strictly quasi-affine $\mathbb{A}^1$-contractible smooth schemes of dimension $n$, which are quotients of the affine spaces by a $\mathbb{G}_a$-action \cite[Theorem 5.1, 5.3]{ad}. In dimension $n \geq 6$, there are arbitrary dimensional moduli of pairwise non-isomorphic strictly quasi-affine $\mathbb{A}^1$-contractible smooth schemes of dimension $n$ arising from the quotient of an action of a unipotent group on an affine space \cite[Theorem 5.3]{ad}. However, they proved their method always produces affine space in dimension $1$ and $2$ \cite[Claim 5.7 and Claim 5.8]{ad}. Infact, for $n \geq 4$, no example of smooth affine $\mathbb{A}^1$-contractible complex variety non isomorphic to $\mathbb{A}^n_{\mathbb{C}}$ is known so far.

In dimension three, Koras-Russell threefolds of first kind (\cite{kalimakar}, \cite{korasrussell}) are $\mathbb{A}^1$-contractible \cite[Theorem 1.1]{df} \cite[Theorem 4.2]{hko} but not isomorphic to $\mathbb{A}^3_{\mathbb{C}}$ \cite[Theorem 9.9]{fr}. There are topologically contractible surfaces not isomorphic to $\mathbb{A}^2_{\mathbb{C}}$ \cite[\S 3]{ra} \cite[Theorem A]{dp}. It is therefore natural to ask whether $\mathbb{A}^2$ is the only $\mathbb{A}^1$-contractible variety of dimension $2$ (\cite[Conjecture 5.2.3]{as}). 
         In this article we will prove the following result:
\begin{theorem} \label{main theorem}
A smooth affine surface $X$ over a field $k$ of characteristic zero is isomorphic to $\mathbb{A}^2_{k}$ if and only if $X$ is $\mathbb{A}^1$-contractible.  
\end{theorem}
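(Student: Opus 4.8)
The plan is to prove the non-trivial implication, namely that an $\A^1$-contractible smooth affine surface $X$ over $k$ is isomorphic to $\A^2_k$; the converse is immediate, since every $\A^n_k$ is $\A^1$-contractible. First I would reduce to an algebraically closed base field. The base-change functor $\mathbf{H}(k) \to \mathbf{H}(\bar k)$ preserves the isomorphism $X \simeq \Spec k$, so $X_{\bar k}$ is again $\A^1$-contractible over $\bar k$; and since $X$ is of finite type it is already defined over a finitely generated subfield, so a spreading-out together with the Lefschetz principle lets me further assume $\bar k \subseteq \C$ whenever a topological input is needed. Once $X_{\bar k} \cong \A^2_{\bar k}$ is established, triviality of forms of $\A^2$ over a field (Kambayashi) descends the isomorphism back to $k$.

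Next I would read off the classical invariants directly from $\A^1$-contractibility. Since $\G_m$ and the classifying space $B\G_m$ are $\A^1$-local, the assignments $U \mapsto \mathcal{O}(U)^*$ and $U \mapsto \Pic(U)$ are $\A^1$-homotopy invariant, whence $\mathcal{O}(X)^* = [X,\G_m] = k^*$ and $\Pic(X) = \Pic(\Spec k) = 0$; as $X$ is smooth and affine, vanishing of $\Pic$ makes its coordinate ring a UFD, i.e. $X$ is factorial. Over $\C$, $\A^1$-contractibility forces $X(\C)$ to be topologically contractible by \cite[Lemma 2.5]{ad}, so $X$ is a $\Q$-homology plane and in particular rational.

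The heart of the argument is to show that the logarithmic Kodaira dimension satisfies $\bar\kappa(X) = -\infty$, and this is the step I expect to be the main obstacle. I would use the paper's structural principle — that $\pi_0^{\A^1}(X)$ records the $\A^1$'s in $X$ — in contrapositive form. If $\bar\kappa(X) \geq 0$, then by Miyanishi--Sugie the surface $X$ is not affine-ruled, so it contains no cylinder-like open set and the images of all non-constant morphisms $\A^1 \to X$ are confined to a proper closed subset; consequently a very general point, and the generic point, lies on no affine line. I would then argue that such a point cannot be joined to any other point by a chain of $\A^1$'s, and that this persists through the iterated chain-connection (``ghost homotopy'') process and Nisnevich sheafification that compute $\pi_0^{\A^1}(X)$. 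The delicate part is precisely this: one must show that confining the affine lines to a proper closed subset genuinely obstructs $\A^1$-connectivity at the level of the stabilised connected-components sheaf, controlling chains uniformly over all field extensions. Granting this, $\pi_0^{\A^1}(X)$ is a non-trivial sheaf, so $X$ is not $\A^1$-connected, contradicting $\A^1$-contractibility; hence $\bar\kappa(X) = -\infty$.

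Finally, with $\bar\kappa(X) = -\infty$ established, Miyanishi--Sugie/Fujita produces an $\A^1$-fibration $X \to B$ over a smooth affine curve. Triviality of $\mathcal{O}(X)^*$ and $\Pic(X)$ forces $B \cong \A^1$ and upgrades the fibration to a non-trivial $\G_a$-action on $X$, so Miyanishi's characterisation \cite[Theorem 1]{mi} — a smooth affine factorial surface with only constant units carrying a non-trivial $\G_a$-action is $\A^2$ — gives $X_{\bar k} \cong \A^2_{\bar k}$, completing the proof after descent. Alternatively, topological contractibility together with $\bar\kappa(X) = -\infty$ yields $X \cong \A^2$ directly through the Gurjar--Miyanishi classification of $\Q$-homology planes.
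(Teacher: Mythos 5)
Your architecture is the same as the paper's --- trivialise units and $\Pic$, prove $\bar\kappa(X)=-\infty$ from $\A^1$-connectedness, feed this into the Miyanishi--Sugie characterisation over a suitable algebraically closed overfield, and descend to $k$ by Kambayashi's theorem on forms of $\A^2$ \cite{kamb}. But the step you explicitly grant (``Granting this, $\pi_0^{\A^1}(X)$ is a non-trivial sheaf'') is not a deferrable technicality: it is the central result of the paper (Theorem \ref{main}, proved via Proposition \ref{track}), and your sketch contains no argument for it. The reason it is genuinely hard is that sections of $\pi_0^{\A^1}(X)$ over fields are computed (Corollary \ref{comparison}) through $\mathcal{L}(X)=\varinjlim_n \mathcal{S}^n(X)$, where each stage $\mathcal{S}^n$ involves a Nisnevich sheafification, and an $\A^1$-homotopy at level $n$ is a section of $\mathcal{S}^{n-1}(X)$ over $\A^1_W$: such a section need not lift to a morphism $\A^1_W\to X$, it lifts only after a Nisnevich cover, with gluing data given by further lower-level homotopies (Remark \ref{useful remark}). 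Consequently, the fact that two $k$-points get identified in $\pi_0^{\A^1}(X)$ does not by itself produce a single actual affine line in $X$, and the implication ``all lines lie in a proper closed subset, hence the sheaf is non-trivial'' is precisely what must be proved, not an observation that ``persists through sheafification''. The paper's Proposition \ref{track} is the missing engine: a non-constant level-$n$ homotopy satisfying Condition \ref{SIGMA} for $(W,\alpha)$ descends to one at a strictly lower level still keeping $\alpha$ in the closure of the image of its $0$-section, so that induction produces an honest non-constant morphism $H'\colon \A^1_Y\to X$ with $\alpha\in\overline{Im(H'(0))}$. Note also that even this machinery does not give a line through $\alpha$; the conclusion is only the dichotomy of Theorem \ref{main} (a line through every point, or a non-constant homotopy whose image closure has dimension at least $2$), and it is only because $X$ is a surface that the second alternative forces $H'$ to be dominant and yields domination by images of $\A^1$.

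Two smaller corrections. First, replace your reduction to $\bar k\subseteq\C$ by an embedding of $k$ into an arbitrary uncountable algebraically closed field $L$, as the paper does: what is needed is uncountability --- both for ``very general point'' arguments and for the Keel--McKernan equivalence \cite{km} between being dominated by images of $\A^1$ and $\bar\kappa<0$, which is what Corollary \ref{A2} invokes --- and $\C$ is unavailable when $k$ has cardinality exceeding the continuum; topological contractibility plays no role in the paper's proof. Second, your endgame (an $\A^1$-fibration, then a $\G_a$-action, then Miyanishi's theorem \cite{mi}) is correct but a detour: the Miyanishi--Sugie characterisation quoted in the paper (\cite[Section 4.1]{miyanishi}: factorial, trivial units, $\bar\kappa<0$ imply $\A^2$) applies immediately once negativity of $\bar\kappa$ is known.
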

The idea of the proof is to use the following algebraic characterisation of the affine plane due to Miyanishi-Sugie (\cite[Section 4.1]{miyanishi}).
\begin{theorem}
A non-singular affine surface $X = Spec(A)$ over an algebraically closed field $k$ of characteristic zero is isomorphic to $\mathbb{A}^2_k$ if and only if $A$ is a unique factorisation domain  with $A^*= k^*$ and the (log) Kodaira dimension $\bar{\kappa}(X)$ of $X$ is negative.

\end{theorem}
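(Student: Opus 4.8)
The plan is to establish the nontrivial (``if'') direction; the converse is immediate, since $k[x,y]$ is a unique factorisation domain with unit group $k^{*}$, and it is classical that $\bar\kappa(\mathbb{A}^2_{k}) = -\infty$ (for instance $\mathbb{A}^2 = \mathbb{P}^2 \setminus \ell$ and no multiple of $K_{\mathbb{P}^2} + \ell = -2H$ is effective). So I assume $A$ is a UFD with $A^{*} = k^{*}$ and $\bar\kappa(X) = -\infty$, and aim to construct an isomorphism $X \cong \mathbb{A}^2_{k}$. Since $A$ is regular, factoriality is equivalent to $\Pic X = 0$, and I will use the two hypotheses in the combined form ``$\Pic X = 0$ and $\mathcal{O}(X)^{*} = k^{*}$''.

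The central input I would invoke is the structure theory of smooth affine surfaces of negative logarithmic Kodaira dimension: by the work underlying the theorem (Miyanishi--Sugie, with contributions of Kawamata and Fujita), $\bar\kappa(X) = -\infty$ forces $X$ to carry an $\mathbb{A}^1$-fibration $p \colon X \to B$ onto a smooth curve, that is, a dominant morphism whose general fibre is isomorphic to $\mathbb{A}^1$. As $X$ is affine I may take $B$ to be a smooth affine curve, replacing $p$ by its Stein factorisation (the general fibre $\mathbb{A}^1$ being connected). This reduces the problem to two tasks: identifying the base $B$, and controlling the degenerate fibres, in both cases via the hypotheses $\Pic X = 0$ and $\mathcal{O}(X)^{*} = k^{*}$.

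Next I would pin down the base. The pullback embeds $\mathcal{O}(B)^{*}$ into $\mathcal{O}(X)^{*} = k^{*}$, so $B$ carries only constant units. Writing $B = \bar B \setminus \{q_1, \dots, q_r\}$ with $\bar B$ smooth projective of genus $g$, the unit group $\mathcal{O}(B)^{*}/k^{*}$ is the kernel of a map $\mathbb{Z}^{r-1} \to \Pic^{0}(\bar B)$; triviality of units therefore forces $r = 1$ as soon as $g = 0$. To exclude $g \geq 1$ I would use factoriality: a positive-genus Jacobian cannot be generated by finitely many point classes, so the factoriality of $X$ (which propagates to a rationality/Picard constraint on $B$ through the $\mathbb{A}^1$-fibration, the generic fibre having trivial Picard group) rules out $\Pic B \neq 0$ and hence $g \geq 1$. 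Combining, $B \cong \mathbb{A}^1$, with triviality of units excluding extra punctures and factoriality excluding positive genus.

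Finally I would trivialise the fibration over $B \cong \mathbb{A}^1$. The generic fibre is a form of the affine line over $k(B)$, hence trivial in characteristic zero, so $p$ is generically a trivial $\mathbb{A}^1$-bundle and $X$ contains a cylinder. It remains to show there are no degenerate fibres: a multiple fibre would contribute positively and contradict $\bar\kappa(X) = -\infty$, while a reducible or disconnected fibre would, through $\Pic X = 0$ and $\mathcal{O}(X)^{*} = k^{*}$, force its components to be simultaneously principal divisors and factors of the pulled-back coordinate, which the classification of fibres of an $\mathbb{A}^1$-fibration excludes. Thus $p$ is an everywhere-$\mathbb{A}^1$ fibration over $\mathbb{A}^1$; such a bundle is Zariski-locally trivial, hence trivial since $\Pic(\mathbb{A}^1)=0$, giving $X \cong \mathbb{A}^1 \times \mathbb{A}^1 = \mathbb{A}^2_{k}$. \emph{The main obstacle is the very first step}: extracting the $\mathbb{A}^1$-fibration from the purely numerical condition $\bar\kappa(X) = -\infty$ is exactly where the deep classification machinery for open surfaces (the logarithmic minimal model programme, peeling of boundary divisors, almost-minimal models) is required, and it constitutes the genuine content of the theorem; the elimination of degenerate fibres in the last step is the second most delicate point, and is precisely where both hypotheses are indispensable.
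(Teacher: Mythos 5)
You should know at the outset that the paper never proves this statement: it is the classical Miyanishi--Sugie characterisation of the affine plane, quoted from Miyanishi's book (\cite[Section 4.1]{miyanishi}) and used as a black box to deduce the paper's main theorem. So the only meaningful comparison is with the classical proof, and your outline does reproduce its architecture correctly: the deep step is indeed the extraction of an $\mathbb{A}^1$-fibration $p \colon X \to B$ from $\bar{\kappa}(X) = -\infty$, the base is identified as $\mathbb{A}^1$ via triviality of units and factoriality, and the fibration is then shown to have no degenerate fibres and trivialised. Your identification of where the genuine depth lies is accurate.

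There is, however, a real error in your elimination of degenerate fibres: the claim that ``a multiple fibre would contribute positively and contradict $\bar{\kappa}(X) = -\infty$'' is false. The mere existence of an $\mathbb{A}^1$-fibration already forces $\bar{\kappa}(X) = -\infty$ by Iitaka's easy addition inequality $\bar{\kappa}(X) \le \bar{\kappa}(\mathbb{A}^1) + \dim B$, irrespective of what the degenerate fibres look like; and there are smooth affine surfaces with $\bar{\kappa} = -\infty$, trivial units, and an $\mathbb{A}^1$-fibration carrying a multiple fibre. Concretely, for a smooth conic $C \subset \mathbb{P}^2$ with tangent line $L$ at $q \in C$, the pencil spanned by $C$ and $2L$ restricts on $X' = \mathbb{P}^2 \setminus C$ to an $\mathbb{A}^1$-fibration over $\mathbb{A}^1$ whose special fibre is $2(L \setminus \{q\})$, of multiplicity two; here $\mathrm{Pic}(X') \cong \mathbb{Z}/2$, which is exactly the hypothesis that fails and the reason $X' \not\cong \mathbb{A}^2_k$. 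So multiple fibres must be excluded by factoriality and triviality of units, not by negativity of $\bar{\kappa}$. The repair runs parallel to your own treatment of reducible fibres: if $p^*(b) = mC$ with $m \ge 2$, factoriality gives $C = \mathrm{div}(f)$, triviality of units gives $p^*(t-b) = c f^m$ with $c \in k^*$; restricting to the generic fibre $\mathbb{A}^1_{k(B)} = \mathrm{Spec}\, k(B)[u]$, the image of $f$ is a polynomial whose $m$-th power lies in $k(B)$, hence lies in $k(B)$ itself, so $f$ descends to a rational function $g$ on $B$; but then $\mathrm{div}(f) = p^*\mathrm{div}(g)$ is an integral combination of full scheme-theoretic fibres, whose coefficient on $C$ is a multiple of $m$, and this cannot equal the reduced irreducible divisor $C$ when $m \ge 2$. (The same device --- a function constant along the fibres descends to $B$ --- is also what makes precise your loosely stated claim that factoriality of $X$ forces $\mathrm{Pic}(B) = 0$ in the base-identification step.) With this repair your sketch agrees with the standard proof.
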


 The main ingredient comes from Theorem \ref{main}, Section 4, which implies that  if $X$ is $\mathbb{A}^1$-connected affine complex surface then it has negative logarithmic Kodaira dimension. This is the main result in this paper. 
The above characterisation also establishes that $\mathbb{A}^1$-contractibility is indeed a stronger notion than topological contractibility as Ramanujam surface \cite[\S 3]{ra} is not $\mathbb{A}^1$-contractible but topologically contractible, so are the topologically contractible tom Dieck-Petrie surfaces. In particular, we get the the following corollary which answers \cite[Question 6.4]{ad}.

\begin{corollary}
There exists topologically contractible smooth algebraic surfaces which are not $\mathbb{A}^1$-contractible. 

\end{corollary}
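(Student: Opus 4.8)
The plan is to read this off as a direct consequence of Theorem \ref{main theorem}, specialized to $k = \mathbb{C}$. The only external input I need is the classical fact, already alluded to in the paragraph preceding the statement, that the family of smooth affine complex surfaces which are topologically contractible is strictly larger than $\{\mathbb{A}^2_{\mathbb{C}}\}$. So the first step is simply to exhibit a witness: I would take the Ramanujam surface constructed in \cite[\S 3]{ra}, which is a smooth affine complex surface that is topologically contractible (indeed contractible and homeomorphic to $\mathbb{R}^4$ after the relevant analytic identification) yet is \emph{not} isomorphic as a variety to $\mathbb{A}^2_{\mathbb{C}}$. The tom Dieck–Petrie surfaces provide an infinite family of further witnesses.

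The second step is the logical core, which is just the contrapositive of the ``only if'' direction of Theorem \ref{main theorem}. Let $X$ be any such surface. Theorem \ref{main theorem} asserts that a smooth affine surface over a characteristic-zero field is $\mathbb{A}^1$-contractible if and only if it is isomorphic to $\mathbb{A}^2$ over that field. Applying this with $k = \mathbb{C}$, since $X \not\cong \mathbb{A}^2_{\mathbb{C}}$ we conclude that $X$ is \emph{not} $\mathbb{A}^1$-contractible. Combining the two steps: $X$ is a smooth algebraic surface that is topologically contractible but not $\mathbb{A}^1$-contractible, which is exactly what the corollary asserts. This simultaneously recovers the remark that $\mathbb{A}^1$-contractibility is strictly stronger than topological contractibility and answers \cite[Question 6.4]{ad}.

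There is no genuine obstacle in the corollary itself; all the mathematical weight has already been discharged in Theorem \ref{main theorem} (via the Miyanishi–Sugie characterisation and the Kodaira-dimension input of Theorem \ref{main}). The only points requiring care are bookkeeping ones: one must confirm that the cited surfaces are genuinely \emph{smooth} and \emph{affine}, so that Theorem \ref{main theorem} applies verbatim, and that their topological contractibility (over $\mathbb{C}$) is on record in \cite{ra} and \cite{dp}. Both are classical, so the proof reduces to citing the witness and invoking the main theorem.
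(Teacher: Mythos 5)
Your proof is correct and is essentially the paper's own argument: the paper likewise exhibits the Ramanujam surface (and the tom Dieck--Petrie surfaces) as topologically contractible smooth affine complex surfaces not isomorphic to $\mathbb{A}^2_{\mathbb{C}}$, and concludes non-$\mathbb{A}^1$-contractibility by the contrapositive of Theorem \ref{main theorem}. One correction to your parenthetical aside: the Ramanujam surface is \emph{not} homeomorphic to $\mathbb{R}^4$ --- it fails to be simply connected at infinity, which by Ramanujam's own criterion in \cite{ra} is exactly why it is not isomorphic to $\mathbb{A}^2_{\mathbb{C}}$ --- but this slip is harmless, since your argument uses only contractibility and non-isomorphism with $\mathbb{A}^2_{\mathbb{C}}$.
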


 
\par
\subsection{Organization of the article}  
To prove our main result, we had to analyse $\pi_0^{\mathbb{A}^1}$ of a variety. Section 2, Section 3 and Section 6 grew out of this analysis. 
      In Section 2, we first recall the definition of $\mathbb{A}^1$-connected component sheaf and its related $\mathbb{A}^1$-invariant sheaf $\mathcal{L}(X)$ \cite[Definition 2.9]{bhs}. The main result here is Corollary \ref{comparison}, which says that for a space $\mathcal{X}$, the sheaves $\pi_0^{\mathbb{A}^1}(\mathcal{X})$ and $\mathcal{L}(\pi_0^{\mathbb{A}^1}(\mathcal{X}))$ have same sections over any finitely generated separable field extension $F/k$ (see also  \cite[Theorem 2.2]{cba}). \par
    Asok and Morel constructed the birational $\mathbb{A}^1$-invariant sheaf, denoted by $\pi_0^{b\mathbb{A}^1}(X)$, for a smooth proper scheme $X$ \cite[Section 6]{am}. The sections over any finitely generated separable field extension $L/k$ of $\pi_0^{b\mathbb{A}^1}(X)$ are the same as the $\mathbb{A}^1$-equivalence classes of $L$-points of $X$ (\cite[Theorem 6.2.1]{am}). In Section 3, we prove that the sheaf $\pi_0^{b\mathbb{A}^1}(X)$ is the connected component sheaf of $X$ in the birational model structure on the category of spaces (see Theorem \ref{birational}) which answers the question raised in \cite[Theorem 4]{ks}.   
In Section 5, we prove Theorem \ref{main theorem} together with the characterisations of $\mathbb{A}^3_{\mathbb{C}}$ and $\mathbb{A}^4_{\mathbb{C}}$.
    In Section 6, we introduce a new invariant $\mathcal{O}_{ch}(X)$ for an affine variety $X$ which is a subring of the Makar-Limanov invariant of $X$. Unlike Makar-Limanov invariant, $\mathcal{O}_{ch}(-)$ is functorial and homotopy invariant but it is not representable in $\mathbf{H}(k)$ (see Lemma \ref{not representable}). We prove that it is the ring of regular functions on the $\mathbb{A}^1$-chain connected component sheaf of $X$ (see Proposition \ref{reg}) i.e.
  $$\mathcal{O}_{ch}(X) \cong Hom_{Shv(Sm/k)}(\mathcal{S}(X), \mathbb{A}^1). $$ 
Theorem \ref{ochtrivial} provides evidence that the ring $\mathcal{O}_{ch}(X)$ detects $\mathbb{A}^1$-s in an affine variety $X$. From Section 4 onwards, we assume $k$ to be an algebraically closed field unless otherwise mentioned. \\

\noindent\textbf{Acknowledgements:} The authors would like to thank Aravind Asok, Adrien Dubouloz, R. V. Gurjar, A. J. Parameswaran, Chetan Balwe, Amit Hogadi, Amartya Kumar Dutta, Neena Gupta, V. Srinivas and Mikhail Zaidenberg for their helpful comments and suggestions. We thank the anonymous referee for insightful suggestions which helped us to improve the exposition. This is a part of the Ph.D thesis of the second author and the second author would like to thank Indian Statistical Institute for providing all the resources during this work.

\section{First properties of $\pi_0^{\mathbb{A}^1}$}



The goal of this section is to show that the field valued points of the $\mathbb{A}^1$-connected component of a space can be computed using an $\mathbb{A}^1$- invariant sheaf associated with the space (see Definition \ref{def: hom inv sheaf}, Definition \ref{def : connected component invariant}, Corollary \ref{comparison}). 
 For any space $\mathcal{X}$, 
define $\pi_0^{\mathbb{A}^1} (\mathcal{X})$ to be the Nisnevich sheaf associated to the presheaf 
$$U \in Sm/k \mapsto Hom_{\mathbf{H}(k)}(U, \mathcal{X}).$$ This is called the $\mathbb{A}^1$-connected component sheaf of $\mathcal{X}$. A space $\mathcal{X}$ is called $\mathbb{A}^1$-connected if $\pi_0^{\mathbb{A}^1}(\mathcal{X}) \cong Spec \ k$ as sheaves.

\begin{definition} (\cite[Example 2.4]{mv}, \cite[Definition 7]{mor}) \label{def: hom inv sheaf}
A presheaf of sets $\mathcal{F}$ on $Sm/k$ (resp. a $k$-scheme $X$) is said to be $\mathbb{A}^1$-invariant (resp. $\mathbb{A}^1$-rigid) if for each smooth $k$-scheme $U$, the natural map $\mathcal{F}(U) \to \mathcal{F}(\mathbb{A}^1_U)$ (resp. $X(U) \rightarrow X(\mathbb{A}^1_U))$ induced by the projection map $\mathbb{A}^{1}_U \rightarrow U$ is a bijection.
\end{definition}

\begin{remark} \label{Open-Closed}
\begin{enumerate}
\item The $\mathbb{A}^1$-rigid $k$-schemes are examples of $\mathbb{A}^1$-fibrant spaces. These $\mathbb{A}^1$-fibrant objects have trivial $\mathbb{A}^1$-homotopy sheaf of groups \cite[Example 2.4]{mv}.
\item Two $\mathbb{A}^1$-rigid $k$-schemes are isomorphic in $\mathbf{H}(k)$ if and only if they are isomorphic as $k$-schemes \cite[Lemma 2.1.9]{am}.
\item Abelian varieties, $\mathbb{G}_m$, any smooth projective curve of positive genus are the examples of $\mathbb{A}^1$-rigid $k$-schemes \cite[Example 2.1.10]{am}.
\item  For an $\mathbb{A}^1$-rigid scheme $X$, $\pi_0^{\mathbb{A}^1}(X)$ is isomorphic to $X$ \cite[Lemma 2.1.9]{am}. 
\item Any open or closed subscheme of an $\mathbb{A}^{1}$-rigid $k$-scheme is $\mathbb{A}^{1}$-rigid.
\item Finite product of $\mathbb{A}^{1}$-rigid $k$-schemes is $\mathbb{A}^{1}$-rigid.
\end{enumerate}
\end{remark}



The next lemma shows that $\mathbb{A}^1$-homotopy theory of smooth schemes has as building blocks $\mathbb{A}^1$-rigid smooth $k$-schemes. These building blocks have no higher homotopies by Remark \ref{Open-Closed}. This is different from the local nature of \'etale homotopy theory 
and also different from the usual homotopy theory of manifolds.
\begin{lemma} [Local nature] \label{Local Nature}
$X$ is a locally finite type $k$-scheme, then $X$ has a local base of $\mathbb{A}^{1}$-rigid $k$-schemes at each of its points.
\end{lemma}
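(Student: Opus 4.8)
The plan is to exhibit, around each point $x \in X$ and inside each open neighbourhood $V$ of $x$, an $\mathbb{A}^1$-rigid open subscheme containing $x$; this is exactly what it means for the $\mathbb{A}^1$-rigid opens to form a local base at $x$. The key observation is that, although $\mathbb{A}^1$ itself is far from being $\mathbb{A}^1$-rigid, the torus $\mathbb{G}_m^m = (\mathbb{A}^1 \setminus \{0\})^m$ is $\mathbb{A}^1$-rigid for every $m$, being a finite product of copies of the $\mathbb{A}^1$-rigid scheme $\mathbb{G}_m$ (Remark \ref{Open-Closed}(3),(6)); and by Remark \ref{Open-Closed}(5) every locally closed subscheme of $\mathbb{G}_m^m$ is again $\mathbb{A}^1$-rigid. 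So it suffices to realise suitably small neighbourhoods of $x$ as locally closed subschemes of tori.

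Concretely, I would first shrink $V$ to an affine open $W = \Spec A$ with $x \in W \subseteq V$, where $A$ is a finite type $k$-algebra. Choosing generators $g_1, \ldots, g_m$ of $A$ as a $k$-algebra, these define a closed immersion $W \hookrightarrow \mathbb{A}^m_k$. The point $x$ need not land in the open torus $\mathbb{G}_m^m \subset \mathbb{A}^m$ for these coordinates, so I would correct the coordinates by constants: since $k$ is infinite (being of characteristic zero), one can pick $c_1, \ldots, c_m \in k$ so that each $g_i + c_i$ is nonvanishing at $x$, i.e. $g_i + c_i \notin \mathfrak{p}_x$. The shifted functions $\tilde g_i := g_i + c_i$ still generate $A$, hence still define a closed immersion $W \hookrightarrow \mathbb{A}^m_k$, now carrying $x$ into $\mathbb{G}_m^m$.

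Then I would set $U := \{ w \in W : \tilde g_i(w) \neq 0 \text{ for all } i \}$. This is an open subscheme of $W$ containing $x$, and under the closed immersion above it is precisely the preimage of $\mathbb{G}_m^m$; since closed immersions are stable under base change, $U$ is a closed subscheme of the open subscheme $\mathbb{G}_m^m \subset \mathbb{A}^m$. Thus $U$ is a locally closed subscheme of $\mathbb{G}_m^m$, and therefore $\mathbb{A}^1$-rigid by the observation above, while $x \in U \subseteq W \subseteq V$. As $V$ was an arbitrary neighbourhood of $x$, this produces the required local base, and nothing beyond $X$ being locally of finite type (so that affine opens have finitely many algebra generators) is used.

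The only point requiring care — and the one genuine obstacle — is the coordinate shift: one must arrange the finitely many generators to be simultaneously nonvanishing at $x$ without destroying the immersion. This is where the infinitude of $k$, equivalently the characteristic zero hypothesis, enters, and it is what forces us to pass from the closed immersion into $\mathbb{A}^m$ (which is not $\mathbb{A}^1$-rigid) to a locally closed immersion into the rigid torus $\mathbb{G}_m^m$.
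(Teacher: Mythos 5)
Your proposal is correct and takes essentially the same route as the paper: embed an affine neighbourhood in $\mathbb{A}^m_k$, translate coordinates so the chosen point lands in the torus $\mathbb{G}_m^m$ (the paper does this by picking the basic open $D((x_1-\alpha_1)\cdots(x_m-\alpha_m))$), and conclude via the $\mathbb{A}^1$-rigidity of $\mathbb{G}_m^m$ together with stability of rigidity under open/closed subschemes. One small correction: the coordinate shift requires neither infinitude of $k$ nor characteristic zero, since for each $i$ at most one constant $c$ can satisfy $g_i + c \in \mathfrak{p}_x$, so any field has enough constants — which is why the paper's proof invokes no such hypothesis at this step.
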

\begin{proof}
Since $X$ is of locally finite type, $X$ has an open covering by closed subschemes of $\mathbb{A}_k^n$. So it is enough to prove the theorem for $\mathbb{A}_k^n$ by Remark \ref{Open-Closed}. For any point $P \in \mathbb{A}_k^n$, $P$ is in some basic open set $D((x_{1} - \alpha_{1})(x_{2} - \alpha_{2})..(x_{n} - \alpha_{n}))$. This basic open set is a finite product of $\mathbb{G}_m$-s, so it is $\mathbb{A}^1$-rigid by Remark \ref{Open-Closed}. Thus all open subsets of this basic open set form a local base at $P$ of $\mathbb{A}^1$-rigid $k$-schemes by Remark \ref{Open-Closed}.
\end{proof}
Inspired by the discreteness of topological connected components, Morel conjectured the following about $\pi_0^{\mathbb{A}^1}(\mathcal{X})$ \cite[Conjecture 1.12]{mor}.
\begin{conjecture}[Morel]
$\pi_0^{\mathbb{A}^1}(\mathcal{X})$ is $\mathbb{A}^1$-invariant for any space $\mathcal{X}$.
\end{conjecture}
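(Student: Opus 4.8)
The plan is to reduce the conjecture to a comparison between $\pi_0^{\mathbb{A}^1}(\mathcal{X})$ and the $\mathbb{A}^1$-invariant sheaf $\mathcal{L}(\mathcal{X})$ manufactured out of the naive chain-connected-component sheaf. Recall that $\mathcal{S}(\mathcal{X})$ is the Nisnevich sheafification of the presheaf whose sections over $U$ are $\mathcal{X}(U)$ modulo the equivalence relation generated by elementary $\mathbb{A}^1$-homotopies $h \in \mathcal{X}(\mathbb{A}^1_U)$; this sheaf fails to be $\mathbb{A}^1$-invariant in general, so one iterates, $\mathcal{S}^{(n+1)} = \mathcal{S}(\mathcal{S}^{(n)})$, and sets $\mathcal{L}(\mathcal{X}) = \mathrm{colim}_n \mathcal{S}^{(n)}(\mathcal{X})$, which is $\mathbb{A}^1$-invariant by construction. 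The canonical epimorphism $\mathcal{X} \twoheadrightarrow \pi_0^{\mathbb{A}^1}(\mathcal{X})$ factors through each $\mathcal{S}^{(n)}(\mathcal{X})$ and hence yields a comparison map $\mathcal{L}(\mathcal{X}) \to \pi_0^{\mathbb{A}^1}(\mathcal{X})$. Since the source is $\mathbb{A}^1$-invariant, the conjecture is equivalent to showing that this comparison map is an isomorphism of Nisnevich sheaves.

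The second step is to match the two sides through Morel's explicit model for $\mathbb{A}^1$-localization, $L_{\mathbb{A}^1} \simeq (\mathrm{Ex}_{\mathrm{Nis}} \circ \mathrm{Sing}^{\mathbb{A}^1})^{\infty}$, obtained by transfinitely iterating the singular functor together with a Nisnevich-local fibrant replacement, so that $\pi_0^{\mathbb{A}^1}(\mathcal{X}) = \pi_0\bigl(L_{\mathbb{A}^1}\mathcal{X}\bigr)$ is computed stage by stage. I would first check that $\pi_0$ of $\mathrm{Sing}^{\mathbb{A}^1}(\mathcal{X})$, after Nisnevich sheafification, is exactly $\mathcal{S}(\mathcal{X})$, since the $1$-simplices of the singular complex are precisely the elementary $\mathbb{A}^1$-homotopies. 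Using that $\pi_0$ commutes with filtered colimits and that it intertwines Nisnevich-local replacement with sheafification, i.e. $\pi_0 \circ \mathrm{Ex}_{\mathrm{Nis}} \cong a_{\mathrm{Nis}} \circ \pi_0$, one would propagate this identification through the tower to obtain $\pi_0^{\mathbb{A}^1}(\mathcal{X}) \cong \mathrm{colim}_n \mathcal{S}^{(n)}(\mathcal{X}) = \mathcal{L}(\mathcal{X})$, provided the iteration can be suitably controlled.

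A complementary reduction would exploit Lemma \ref{Local Nature}: every point of a smooth scheme has a cofinal system of $\mathbb{A}^1$-rigid neighbourhoods, and on an $\mathbb{A}^1$-rigid $U$ the conjecture is automatic, since $\pi_0^{\mathbb{A}^1}(U) \cong U$ is already $\mathbb{A}^1$-invariant by Remark \ref{Open-Closed}(4). One could therefore hope to verify $\mathbb{A}^1$-invariance sectionwise against such local test objects and then glue. The sections over a finitely generated separable field extension are already settled by Corollary \ref{comparison}, which gives $\pi_0^{\mathbb{A}^1}(\mathcal{X})(F) \cong \mathcal{L}\bigl(\pi_0^{\mathbb{A}^1}(\mathcal{X})\bigr)(F)$, so that the comparison map is a bijection on all field-valued points.

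The main obstacle is precisely the passage from field-valued points to arbitrary smooth bases $U$, in particular higher-dimensional and henselian local ones. The subtlety is that the Nisnevich sheafification used in building $\mathcal{S}$ can create identifications over such $U$ that are visible to the full localization $L_{\mathbb{A}^1}$ yet are not realized by any honest finite chain of $\mathbb{A}^1$-homotopies; dually, the transfinite iteration $\mathcal{S}^{(n)}$ need not stabilize at a finite stage, and controlling its colimit uniformly in $U$ is exactly what a proof must supply. Because a genuine $\mathbb{A}^1$-chain connecting two sections over $U$ need not exist even when their images agree after localization, gluing the sectionwise statement into an isomorphism of sheaves is \emph{not} formal. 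This gap between chain-level and localization-level connectivity is the reason the statement is recorded here as a conjecture rather than a theorem, with Corollary \ref{comparison} providing the strongest currently available evidence.
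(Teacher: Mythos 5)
You have correctly identified the essential fact: this statement is recorded in the paper as Morel's \emph{conjecture}. The paper offers no proof of it; it only lists the known cases in Remark \ref{properties} ($\mathbb{A}^1$-connected spaces, motivic $H$-groups and their homogeneous spaces, smooth projective surfaces and smooth toric varieties) and states explicitly that the conjecture remains open otherwise. Your proposal likewise ends without a proof, and your closing paragraph says so honestly; on the status of the statement you and the paper agree, and there is no proof to compare against.

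One step that you present as a formal recollection is, however, essentially the whole problem, and you should withdraw it. You claim that the canonical epimorphism $\mathcal{X} \twoheadrightarrow \pi_0^{\mathbb{A}^1}(\mathcal{X})$ factors through each $\mathcal{S}^n(\mathcal{X})$ and hence yields a map $\mathcal{L}(\mathcal{X}) \to \pi_0^{\mathbb{A}^1}(\mathcal{X})$. This is true for $n=1$: a genuine homotopy $h \in \mathcal{X}(\mathbb{A}^1_U)$ has equal endpoints in $\mathrm{Hom}_{\mathbf{H}(k)}(U,\mathcal{X})$ because $\mathbb{A}^1_U \to U$ is an $\mathbb{A}^1$-weak equivalence, which gives the epimorphism $\mathcal{S}(\mathcal{X}) \to \pi_0^{\mathbb{A}^1}(\mathcal{X})$ of Remark \ref{propertiesS(X)}. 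But already for $n=2$ an element of $\mathcal{S}(\mathcal{X})(\mathbb{A}^1_U)$ is a section of a Nisnevich sheafification: it lifts to sections of $\mathcal{X}$ only on a cover of $\mathbb{A}^1_U$, glued on overlaps merely up to chains of homotopies (the ``ghost homotopies'' of \cite{bhs}, cf.\ Remark \ref{useful remark}). To conclude that its endpoints agree in $\pi_0^{\mathbb{A}^1}(\mathcal{X})$ one must assemble this homotopy-coherent descent datum into an actual morphism to the $\mathbb{A}^1$-localization, and that is precisely what is not known. Indeed, if your factorization existed, the conjecture would follow immediately: Remark \ref{propertiesS(X)}(4) supplies the canonical map $f\colon \pi_0^{\mathbb{A}^1}(\mathcal{X}) \to \mathcal{L}(\mathcal{X})$ (note its direction is opposite to yours), both $\mathcal{X} \to \mathcal{L}(\mathcal{X})$ and $\mathcal{X} \to \pi_0^{\mathbb{A}^1}(\mathcal{X})$ are epimorphisms, and compatibility over $\mathcal{X}$ would force $f$ and your map $g$ to be mutually inverse, making $\pi_0^{\mathbb{A}^1}(\mathcal{X})$ isomorphic to the $\mathbb{A}^1$-invariant sheaf $\mathcal{L}(\mathcal{X})$. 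The same conflation undermines your tower argument: the sheaf $a_{\mathrm{Nis}}\pi_0$ is invariant under Nisnevich-local weak equivalence, but the \emph{presheaf} $\pi_0$ of a Nisnevich-fibrant replacement is not computed section-wise by sheafification, so identifying the stages of the $(\mathrm{Ex}_{\mathrm{Nis}}\circ \mathrm{Sing}^{\mathbb{A}^1})$-tower with the iterates $\mathcal{S}^n$ breaks at exactly the same point. Your appeal to Corollary \ref{comparison} for field-valued sections is correct, and your final diagnosis of the obstruction is accurate; the flaw is only that your opening reduction quietly assumes what is to be proved.
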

\begin{remark} \label{properties}
$\pi_0^{\mathbb{A}^1}(\mathcal{X})$ is $\mathbb{A}^1$-invariant for the following $\mathcal{X}$:
\begin{enumerate}
\item $\mathcal{X}$ is an $\mathbb{A}^1$-connected space. 
\item $\mathcal{X}$ is a motivic $H$-group or  
a homogeneous space for motivic $H$-group \cite[Theorem 4.18]{c} over an infinite perfect field.  
\item $\mathcal{X}$ is a smooth projective surface \cite[Corollary 3.15, over any field in case of non-uniruled surface] {bhs} \cite[Theorem 1.2, over an algebraically closed field of characteristic zero  in case of birationally ruled surface]{ca} or a smooth toric variety \cite[Lemma 4.2, Lemma 4.4]{w}.
\end{enumerate}
Other than these cases, the conjecture remains open.
\end{remark}
\begin{definition} \cite[Definition 2.9]{bhs} \label{def : connected component invariant}
Let $\mathcal{F}$ be a presheaf of sets on $Sm/k$, $\mathcal{S}(\mathcal{F})$ is defined as the Nisnevich sheaf associated to the presheaf $\mathcal{S}^{pre}(\mathcal{F})$ given by
$$\mathcal{S}^{pre}(\mathcal{F})(U) := \mathcal{F}(U)/\sim$$ 
for $U \in Sm/k$, where $\mathcal{F}(U)/\sim$ is the quotient of $\mathcal{F}(U)$ by the equivalence relation generated by $\sigma_0(z) \sim \sigma_1(z)$, $\forall z \in \mathcal{F}(\mathbb{A}^1_U)$ and $\sigma_0, \sigma_1: \mathcal{F}(\mathbb{A}^1_U) \to \mathcal{F}(U)$ are induced by the $0$-section and the $1$-section $U \to \mathbb{A}^1_U$ respectively. For any $n > 1$, $\mathcal{S}^n(\mathcal{F})$ is defined inductively as the sheaves
$$\mathcal{S}^n(\mathcal{F}) := \mathcal{S}(\mathcal{S}^{n-1}(\mathcal{F})).$$
For any sheaf $\mathcal{F}$, there is a canonical epimorphism $\mathcal{F} \to \mathcal{S}(\mathcal{F})$. The sheaf $\mathcal{L}(\mathcal{F})$ is defined as 
$$\mathcal{L}(\mathcal{F}) :=  \underset{n}{\varinjlim} \; \mathcal{S}^n(\mathcal{F}).$$ Therefore there is an induced epimorphism $\mathcal{F} \to \mathcal{L}(\mathcal{F})$.  
\end{definition}

\begin{remark}
\begin{enumerate} \label{propertiesS(X)}
\item For $X \in Sm/k$, $\mathcal{S}(X)$ is the $\mathbb{A}^1$-chain connected component sheaf $\pi_0^{ch}(X)$ of $X$ which is the Nisnevich sheaf on $Sm/k$ associated to the presheaf $\pi_0(Sing_*^{\mathbb{A}^1}(X))$ \cite[Remark 2.10]{bhs}. So $\mathcal{S}(X)$ is the coequalizer in $Shv(Sm/k)$ of 
$$\begin{tikzcd}
\underline{Hom}(\mathbb{A}^1_k, X) \ar[r, shift left=.75ex, "\theta_0"] \ar[r, shift right=.75ex,swap, "\theta_1"] 
& X
\end{tikzcd}$$
where $\theta_0$ and $\theta_1$ are induced by the $0$-section and the $1$-section $Spec \ k \to \mathbb{A}^1_k$ respectively.
\item  There is a natural map $$\mathcal{S}(X) \to \pi_0^{\mathbb{A}^1}(X)$$ which is an epimorphism.
\item For a presheaf $\mathcal{F}$ on $Sm/k$, $\mathcal{L}(\mathcal{F})$ is an $\mathbb{A}^1$-invariant sheaf \cite[Theorem 2.13]{bhs}.
\item The canonical epimorphism $\mathcal{F} \to \mathcal{L}(\mathcal{F})$ uniquely factors through $\mathcal{F} \to \pi_0^{\mathbb{A}^1}(\mathcal{F})$ \cite[Remark 2.15]{bhs}. The morphism $\pi_0^{\mathbb{A}^1}(\mathcal{F}) \to \mathcal{L}(\mathcal{F})$ is an isomorphism if and only if  $\pi_0^{\mathbb{A}^1}(\mathcal{F})$ is $\mathbb{A}^1$-invariant \cite[Corollary 2.18]{bhs}.
\end{enumerate}
\end{remark}

\begin{definition}
Suppose, $\mathcal{G} \in PSh(Sm/k)$. $\mathcal{G}$ is called homotopy invariant in one variable if for each finitely generated separable field extension $F$ of $k$ the map $\mathcal{G}(F) \to \mathcal{G}(\mathbb{A}^1_F)$ induced by projection is a bijection.
\end{definition}
\begin{example}
\begin{enumerate}
\item  Any $\mathbb{A}^1$-invariant presheaf is homotopy invariant in one variable.
\item $\pi_0^{\mathbb{A}^1}(\mathcal{X})$ is homotopy invariant in one variable for any space $\mathcal{X}$ \cite[Corollary 3.2]{c}.
\end{enumerate}
\end{example}
The following results give a method of comparing $\pi_0^{\mathbb{A}^1}(\mathcal{F})$ and $\mathcal{L}(\mathcal{F})$. Corollary \ref{comparison} was already proved in \cite[Theorem 2.2]{cba}. However our proof works in a more general setting.
\begin{lemma} \label{separable extension}
Suppose $\mathcal{G}$ is a Nisnevich sheaf of sets on $Sm/k$ which is homotopy invariant in one variable and $F/k$ is a finitely generated separable field extension. Then the map $\mathcal{G}(F) \to \mathcal{S}(\mathcal{G})(F)$ is a bijection.
\end{lemma}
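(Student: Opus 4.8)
The plan is to factor the natural map through the presheaf-level quotient, i.e. to write it as
$$\mathcal{G}(F) \longrightarrow \mathcal{S}^{pre}(\mathcal{G})(F) \longrightarrow \mathcal{S}(\mathcal{G})(F),$$
and to show that each arrow is a bijection. Throughout I read the $F$-sections of a (pre)sheaf $\mathcal{F}$ as the filtered colimit $\mathcal{F}(F) = \varinjlim_U \mathcal{F}(U)$ over smooth models $U$ of $F$; these models exist because $F/k$ is finitely generated and separable, hence separably generated, and this colimit is precisely the stalk of $\mathcal{F}$ at the generic point $\Spec F$. Under this convention $\mathbb{A}^1_F = \Spec F[t] = \varprojlim_U \mathbb{A}^1_U$, so that $\mathcal{G}(\mathbb{A}^1_F) = \varinjlim_U \mathcal{G}(\mathbb{A}^1_U)$.

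First I would treat the left-hand arrow, which is the heart of the argument and where homotopy invariance in one variable enters. Since filtered colimits commute with quotients by a generated equivalence relation, $\mathcal{S}^{pre}(\mathcal{G})(F)$ is $\mathcal{G}(F)$ modulo the relation generated by $\sigma_0(z) \sim \sigma_1(z)$ for $z \in \mathcal{G}(\mathbb{A}^1_F)$. The quotient map is automatically surjective, so it only remains to see that this relation is trivial. Here $\sigma_0, \sigma_1 \colon \mathcal{G}(\mathbb{A}^1_F) \to \mathcal{G}(F)$ are the restrictions $s_0^*, s_1^*$ along the $0$- and $1$-sections $s_0, s_1$ of the projection $p \colon \mathbb{A}^1_F \to \Spec F$. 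Because $p \circ s_0 = p \circ s_1 = \mathrm{id}$, both $\sigma_0$ and $\sigma_1$ are left inverses of $p^*$; and homotopy invariance in one variable says exactly that $p^* \colon \mathcal{G}(F) \to \mathcal{G}(\mathbb{A}^1_F)$ is a bijection. Hence $\sigma_0 = \sigma_1 = (p^*)^{-1}$ as maps, so $\sigma_0(z) = \sigma_1(z)$ for every $z$, the generating relation is an equality, and the left arrow is a bijection.

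Next I would argue that the right-hand arrow, induced by Nisnevich sheafification $\mathcal{S}(\mathcal{G}) = a_{Nis}\mathcal{S}^{pre}(\mathcal{G})$, is a bijection. The point is that evaluation at a finitely generated separable field extension is the stalk functor of the Nisnevich topos at the corresponding field point, and sheafification is an isomorphism on stalks. Concretely, any Nisnevich cover of a model $U$ of $F$ admits, at the generic point, a component with trivial residue field extension, so it is refined by an open, hence birational, neighborhood that is again a model of $F$; therefore refining by Nisnevich covers does not change the colimit $\varinjlim_U (-)(U)$, and one obtains $\mathcal{S}^{pre}(\mathcal{G})(F) \xrightarrow{\ \sim\ } \mathcal{S}(\mathcal{G})(F)$.

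Composing the two bijections yields the claim. I expect the main obstacle to be making the second step rigorous: one must pin down the definition of $F$-sections as a stalk and justify that Nisnevich sheafification preserves it, i.e. that the field points form a conservative family for which sheafification is stalkwise invertible and the stalk is the naive colimit over smooth models. The first step, by contrast, is a short and direct consequence of homotopy invariance in one variable, and it is where the hypothesis on $\mathcal{G}$ is really used.
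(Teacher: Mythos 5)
Your proof is correct and takes essentially the same route as the paper's: the paper proves surjectivity from the epimorphism $\mathcal{G} \to \mathcal{S}(\mathcal{G})$ and injectivity by noting that any chain of $\mathbb{A}^1_F$-homotopies joining two elements of $\mathcal{G}(F)$ must be constant, since every $H \in \mathcal{G}(\mathbb{A}^1_F)$ factors through $\mathcal{G}(F)$ --- which is precisely your observation that $\sigma_0 = \sigma_1 = (p^*)^{-1}$. Your explicit factorization through $\mathcal{S}^{pre}(\mathcal{G})(F)$ and the stalk-theoretic argument that Nisnevich sheafification does not change sections over $F$ merely spell out what the paper leaves implicit when it passes from equality in $\mathcal{S}(\mathcal{G})(F)$ to a chain of $\mathbb{A}^1_F$-homotopies in $\mathcal{G}$.
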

\begin{proof}
Surjectivity follows because of the epimorphism $\mathcal{G} \to \mathcal{S}(\mathcal{G})$. For injectivity, suppose $a, b \in \mathcal{G}(F)$ such that $a$ and $b$ map to the same element of $\mathcal{S}(\mathcal{G})(F)$. Then there are chain of $\mathbb{A}^1_F$-s in $\mathcal{G}$ joining $a$ and $b$. 
But any $H \in \mathcal{G}(\mathbb{A}^1_F)$ factors through $\mathcal{G}(F)$. Therefore $a = b$ in $\mathcal{G}(F)$.
\end{proof}

\begin{lemma} \label{DVR}
Suppose $\mathcal{G}$ is a Nisnevich sheaf of sets on $Sm/k$ which is homotopy invariant in one variable and $X = Spec \ R$, spectrum of an essentially smooth discrete valuation ring. Then the map $\mathcal{G}(X) \to \mathcal{S}(\mathcal{G})(X)$ is surjective. 
\end{lemma}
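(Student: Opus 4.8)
The plan is to realise a section of the sheafification $\mathcal{S}(\mathcal{G})$ over $X$ by gluing a lift over the generic point to a lift over a suitable étale neighbourhood of the closed point, using Nisnevich descent along a distinguished square. Write $F$ for the fraction field of $R$ and $x$ for the closed point, so that $U := \Spec F \hookrightarrow X$ is the open immersion complementary to $x$ (here we use that $R$ is a DVR, so the generic point is open). Fix $s \in \mathcal{S}(\mathcal{G})(X)$, which we must lift to $\mathcal{G}(X)$.

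First I produce the two local lifts. Since $R$ is essentially smooth, $F$ is a finitely generated separable extension of $k$, so Lemma \ref{separable extension} gives a unique $a \in \mathcal{G}(F)$ lifting $s|_F$. For the closed point, pass to the henselisation $R^h$: it is henselian local, so it admits no nontrivial Nisnevich covers, whence $\mathcal{S}(\mathcal{G})(R^h) = \mathcal{S}^{pre}(\mathcal{G})(R^h) = \mathcal{G}(R^h)/\!\sim$ and $s|_{R^h}$ lifts to some $b \in \mathcal{G}(R^h)$. By continuity of the Nisnevich sheaf $\mathcal{G}$ along the cofiltered system of pointed étale neighbourhoods, $b$ is the image of some $c \in \mathcal{G}(V)$, where $p : V \to X$ is étale with a unique point $u_0$ over $x$ satisfying $\kappa(u_0) = \kappa(x)$; after shrinking $V$ we may further assume that $c$ maps to $s|_V$ in $\mathcal{S}(\mathcal{G})(V)$. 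For this $V$ the square with vertices $V_\eta := V \times_X U$, $V$, $U$, $X$ is a distinguished Nisnevich square, since $p^{-1}(x)_{\mathrm{red}} = \Spec \kappa(x) \to \{x\}$ is an isomorphism.

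Next I check that $a$ and $c$ agree on the overlap, which is the heart of the argument. The generic fibre $V_\eta$ is a finite disjoint union $\bigsqcup_j \Spec E_j$ with each $E_j/F$ finite separable, hence each $E_j$ finitely generated separable over $k$. Both $a|_{V_\eta}$ and $c|_{V_\eta}$ map to $s|_{V_\eta}$ under the epimorphism $\mathcal{G} \to \mathcal{S}(\mathcal{G})$; applying Lemma \ref{separable extension} to each $E_j$, the map $\mathcal{G}(V_\eta) = \prod_j \mathcal{G}(E_j) \to \prod_j \mathcal{S}(\mathcal{G})(E_j) = \mathcal{S}(\mathcal{G})(V_\eta)$ is a bijection, so $a|_{V_\eta} = c|_{V_\eta}$ in $\mathcal{G}(V_\eta)$. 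This is exactly where the hypothesis of homotopy invariance in one variable enters (through Lemma \ref{separable extension}): it upgrades mere agreement in $\mathcal{S}(\mathcal{G})$ to genuine agreement in $\mathcal{G}$, which is what a sheaf gluing for $\mathcal{G}$ requires.

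Finally I glue and verify. Nisnevich descent for $\mathcal{G}$ along the distinguished square gives $\mathcal{G}(X) = \mathcal{G}(U) \times_{\mathcal{G}(V_\eta)} \mathcal{G}(V)$, so the matching pair $(a,c)$ descends to a unique $\tilde s \in \mathcal{G}(X)$. Applying the same descent to the Nisnevich sheaf $\mathcal{S}(\mathcal{G})$, the images of $\tilde s$ and of $s$ in $\mathcal{S}(\mathcal{G})(X) = \mathcal{S}(\mathcal{G})(U) \times_{\mathcal{S}(\mathcal{G})(V_\eta)} \mathcal{S}(\mathcal{G})(V)$ both restrict to $(s|_U, s|_V)$; since a fibre product injects into the corresponding product, $\tilde s$ maps to $s$, proving surjectivity. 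I expect the genuine obstacles to be technical rather than conceptual: justifying the continuity statement for $\mathcal{G}$ over the essentially smooth local schemes involved, and ensuring the étale neighbourhood $V$ can be chosen with a single point over $x$ so that the square is genuinely distinguished.
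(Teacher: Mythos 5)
Your proof is correct and follows essentially the same route as the paper's: lift over the generic point via Lemma \ref{separable extension}, lift over the henselization and spread out to a Nisnevich neighbourhood of the closed point, match the two lifts on the overlap using the bijection of $\mathcal{G} \to \mathcal{S}(\mathcal{G})$ over fields, and glue along the resulting elementary distinguished square. Your write-up is in fact slightly more careful than the paper's (treating the generic fibre $V_\eta$ as a disjoint union of field spectra rather than a single $\Spec L$, shrinking $V$ so the square is genuinely distinguished, and verifying that the glued section maps to $s$), but these are refinements of the same argument, not a different one.
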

\begin{proof}
Let $\alpha$ be an element of $\mathcal{S}(\mathcal{G})(X)$. The element $\alpha$ gives an element of $\mathcal{S}(\mathcal{G})(Spec \ R^h)$ ($R^h$ is the Henselization of $R$). The map $\mathcal{G}(Spec \ R^h) \to \mathcal{S}(\mathcal{G})(Spec \ R^h)$ is surjective. So there is a Nisnevich neighbourhood $W \to X$ of the closed point of $X$ and $\alpha^{\prime} \in \mathcal{G}(W)$ such that $\alpha^{\prime}$ maps to $\alpha|_W$. Suppose, $F = Frac(R)$ and $L = K(W)$. Since over $F$ we have bijection $\mathcal{G}(F) \to \mathcal{S}(\mathcal{G})(F)$, there is $\beta \in \mathcal{G}(F)$ such that $\beta$ maps to $\alpha|_F$. The following square is an elementary distinguished square in the Nisnevich topology (Definition \ref{cd}):
$$\begin{tikzcd}
& Spec \ L \arrow[r] \arrow[d]
& W \arrow[d] \\
& Spec \ F \arrow[r]
& X
\end{tikzcd}$$
Since the morphism $\mathcal{G} \to \mathcal{S}(\mathcal{G})$ is bijection for sections over fields, we have $\beta|_L = \alpha^{\prime}|_L$. As $\mathcal{G}$ is a sheaf, $\beta$ and $\alpha^{\prime}$ lift to an element $\tilde{\alpha} \in \mathcal{G}(X)$. This $\tilde{\alpha}$ maps to $\alpha$.
\end{proof}
\begin{theorem}\label{sur}
Let $\mathcal{G}$ be a Nisnevich sheaf of sets on $Sm/k$ which is homotopy invariant in one variable. Then for each $X \in Sm/k$ with $dim(X) \leq 1$, the map $\mathcal{G}(X) \to \mathcal{S}(\mathcal{G})(X)$ is surjective.
\end{theorem}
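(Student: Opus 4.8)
The plan is to reduce to the case of a smooth connected curve and then lift a section of $\mathcal{S}(\mathcal{G})$ one closed point at a time, using Lemma \ref{DVR} to produce local lifts and the \emph{uniqueness} half of Lemma \ref{separable extension} to force those lifts to be compatible at the generic point. First I would reduce to $X$ connected: a Nisnevich sheaf of sets sends a finite disjoint union to the product of its values, and an $X$ of dimension $\le 1$ is a finite disjoint union of smooth connected schemes of dimension $0$ or $1$, so surjectivity over $X$ follows from surjectivity over each component. If $\dim X = 0$ then $X = Spec\ F$ with $F/k$ finite separable, and Lemma \ref{separable extension} already gives a bijection $\mathcal{G}(F) \to \mathcal{S}(\mathcal{G})(F)$. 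Hence I may assume $X$ is a smooth connected curve with function field $F = k(X)$, a finitely generated separable extension of $k$.

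Fix $\alpha \in \mathcal{S}(\mathcal{G})(X)$. By Lemma \ref{separable extension} there is a \emph{unique} $\beta \in \mathcal{G}(F)$ lifting $\alpha|_F$. Spreading $\beta$ out, using that $\mathcal{G}$ is finitary so $\mathcal{G}(F) = \varinjlim_U \mathcal{G}(U)$ over dense opens $U$ (exactly as in the proof of Lemma \ref{DVR}), and shrinking so that the image in $\mathcal{S}(\mathcal{G})$ matches $\alpha$, I obtain a dense open $U_0 \subseteq X$ and $\beta_0 \in \mathcal{G}(U_0)$ with $\beta_0|_F = \beta$ and $\beta_0 \mapsto \alpha|_{U_0}$. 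The complement $Z = X \setminus U_0$ is a finite set of closed points, and I would induct on $|Z|$: it suffices to enlarge $U_0$ by a single point $x \in Z$, producing a lift over $U_1 := U_0 \cup \{x\}$ that still restricts to $\beta$ over $F$.

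For the inductive step, apply Lemma \ref{DVR} to the essentially smooth DVR $\mathcal{O}_{X,x}$ to get $\gamma \in \mathcal{G}(Spec\ \mathcal{O}_{X,x})$ lifting $\alpha|_{Spec\ \mathcal{O}_{X,x}}$; by uniqueness of the generic lift, $\gamma|_F = \beta = \beta_0|_F$. Spreading $\gamma$ out to $\gamma' \in \mathcal{G}(W)$ on some open $W \ni x$ with $W \subseteq U_1$ (so $W \setminus \{x\} \subseteq U_0$) and $\gamma' \mapsto \alpha|_W$, I have $\gamma'|_F = \beta_0|_F$ in $\mathcal{G}(F)$, so the two sections agree on some dense open of $W \setminus \{x\}$. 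Since the locus of disagreement is a finite set of closed points all distinct from $x$, I may delete them from $W$ to get a smaller open $W'' \ni x$ on which $\gamma'|_{W'' \setminus \{x\}} = \beta_0|_{W'' \setminus \{x\}}$. As $\{U_0, W''\}$ is a Zariski cover of $U_1$ and the two sections agree on the overlap $W'' \setminus \{x\}$, the sheaf condition glues them to $\beta_1 \in \mathcal{G}(U_1)$; checking on each member of the cover shows $\beta_1 \mapsto \alpha|_{U_1}$ and $\beta_1|_F = \beta$. Iterating exhausts $Z$ and yields the desired lift over $X$.

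The main obstacle is exactly this gluing: a priori, two sections of $\mathcal{G}$ over a curve that agree at the generic point need not agree on any punctured neighborhood, since $\mathcal{G}$ is only homotopy invariant in one variable and carries no injectivity (Gersten-type) property. The device that overcomes this is to add one point at a time and to use finiteness of the disagreement locus to shrink the new chart until agreement holds on the \emph{entire} overlap, so that ordinary Zariski gluing applies; the uniqueness in Lemma \ref{separable extension} is what forces the local and generic lifts to coincide over $F$ in the first place. Throughout, as in Lemma \ref{DVR}, I use that $\mathcal{G}$ and $\mathcal{S}(\mathcal{G})$ commute with the cofiltered limits defining local rings and generic points.
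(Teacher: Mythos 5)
Your proof is correct and takes essentially the same route as the paper: the paper's own proof consists of invoking Lemma \ref{DVR} together with the Zariski descent argument of \cite[Theorem 3.1]{c}, which is exactly the generic-point/DVR lifting and point-by-point Zariski gluing you carry out (with the uniqueness half of Lemma \ref{separable extension} forcing compatibility over the function field). In effect, you have written out in full the gluing details that the paper delegates to the cited reference.
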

\begin{proof}
The proof of the theorem follows from Lemma \ref{DVR} and the Zariski descent argument in the proof in \cite[Theorem 3.1]{c}.
\end{proof}
\begin{corollary} \label{S}
Suppose $\mathcal{G}$ is a Nisnevich sheaf of sets on $Sm/k$ which is homotopy invariant in one variable. Then $\mathcal{S}(\mathcal{G})$ is also homotopy invariant in one variable. 
\end{corollary}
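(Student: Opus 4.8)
The plan is to fix a finitely generated separable field extension $F/k$ and to prove that the projection-induced map $p^{*}\colon \mathcal{S}(\mathcal{G})(F)\to \mathcal{S}(\mathcal{G})(\mathbb{A}^1_F)$ is a bijection. I would handle injectivity and surjectivity by entirely different mechanisms. Injectivity is formal and uses nothing about $\mathcal{G}$: the zero-section $s\colon Spec\ F\to \mathbb{A}^1_F$ satisfies $p\circ s=\mathrm{id}$, so $s^{*}\circ p^{*}=\mathrm{id}$ on $\mathcal{S}(\mathcal{G})(F)$ and hence $p^{*}$ is split injective. This works for any presheaf, in particular for $\mathcal{S}(\mathcal{G})$.

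For surjectivity I would exploit the commutative square relating the epimorphism $\mathcal{G}\to\mathcal{S}(\mathcal{G})$ to the projection:
$$\begin{tikzcd}
\mathcal{G}(F) \ar[r, "p^{*}"] \ar[d] & \mathcal{G}(\mathbb{A}^1_F) \ar[d] \\
\mathcal{S}(\mathcal{G})(F) \ar[r, "p^{*}"'] & \mathcal{S}(\mathcal{G})(\mathbb{A}^1_F)
\end{tikzcd}$$
By Lemma \ref{separable extension} the left vertical arrow is a bijection, and since $\mathcal{G}$ is homotopy invariant in one variable the top arrow is a bijection. A short diagram chase then shows that the image of the bottom map coincides with the image of the right vertical arrow $\mathcal{G}(\mathbb{A}^1_F)\to\mathcal{S}(\mathcal{G})(\mathbb{A}^1_F)$; thus $p^{*}$ is surjective as soon as this right-hand map is. Equivalently and more concretely: given $\delta\in\mathcal{S}(\mathcal{G})(\mathbb{A}^1_F)$ I would lift it to some $g\in\mathcal{G}(\mathbb{A}^1_F)$, use the bijection $\mathcal{G}(F)\xrightarrow{\sim}\mathcal{G}(\mathbb{A}^1_F)$ to write $g=p^{*}w$, and conclude that $\delta$ is the image of the class of $w$ in $\mathcal{S}(\mathcal{G})(F)$.

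The only genuine content, and the step I expect to be the main obstacle, is the section-wise surjectivity of $\mathcal{G}\to\mathcal{S}(\mathcal{G})$ over $\mathbb{A}^1_F$. This does not follow from Theorem \ref{sur} verbatim, because $\mathbb{A}^1_F$ is not of finite type over $k$ and its dimension over $k$ may be arbitrarily large. The key observation is that, as an abstract scheme, $\mathbb{A}^1_F=Spec\ F[t]$ is a regular scheme of Krull dimension one which is essentially smooth over $k$: its generic point is $Spec\ F(t)$ with $F(t)/k$ again finitely generated and separable, and its local rings at closed points are essentially smooth discrete valuation rings (in characteristic zero the residue fields are automatically separable over $k$). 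Hence Lemma \ref{separable extension} supplies a lift of $\delta$ over the generic point, and therefore over a dense open subscheme of $\mathbb{A}^1_F$, while Lemma \ref{DVR} supplies lifts in Nisnevich neighbourhoods of the finitely many remaining closed points.

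To finish I would patch these local lifts together exactly as in the Zariski-descent argument used in the proof of Theorem \ref{sur} (following \cite[Theorem 3.1]{c}), obtaining a global lift $g\in\mathcal{G}(\mathbb{A}^1_F)$, which by the discussion above yields surjectivity of $p^{*}$. The delicate point to verify is precisely that this gluing, originally phrased for finite-type smooth curves, carries over to the essentially smooth one-dimensional scheme $\mathbb{A}^1_F$; once that is checked, combining injectivity and surjectivity gives the bijection $\mathcal{S}(\mathcal{G})(F)\cong\mathcal{S}(\mathcal{G})(\mathbb{A}^1_F)$, i.e.\ $\mathcal{S}(\mathcal{G})$ is homotopy invariant in one variable.
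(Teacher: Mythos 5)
Your proof is correct and is essentially the paper's own argument: the paper's one-line proof of Corollary \ref{S} is precisely the combination of Lemma \ref{separable extension} (bijectivity of $\mathcal{G}\to\mathcal{S}(\mathcal{G})$ over fields, giving both the splitting/injectivity step and the descent step) with Theorem \ref{sur} (surjectivity of $\mathcal{G}\to\mathcal{S}(\mathcal{G})$ in dimension $\leq 1$, whose proof via Lemma \ref{DVR} and Zariski descent is exactly what you invoke over $\mathbb{A}^1_F$). Your explicit treatment of the point that $\mathbb{A}^1_F$ is only essentially smooth of Krull dimension one over $k$ — so that Theorem \ref{sur} must be applied through its proof rather than its literal statement — fills in a detail the paper leaves implicit, but it is the same route, not a different one.
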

\begin{proof}
Using Lemma \ref{separable extension} and Theorem \ref{sur}, we get the proof.
\end{proof}
\begin{corollary} \label{inf}
Suppose $\mathcal{G}$ is a Nisnevich sheaf of sets on $Sm/k$ which is homotopy invariant in one variable and $F/k$ is a finitely generated separable field extension. Then the maps $\mathcal{G}(F) \to \mathcal{L}(\mathcal{G})(F)$ and $\mathcal{G}(\mathbb{A}^1_F) \to \mathcal{L}(\mathcal{G})(\mathbb{A}^1_F)$ are bijections.
\end{corollary}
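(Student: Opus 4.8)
The plan is to treat the two maps separately, bootstrapping entirely from Lemma~\ref{separable extension} and Corollary~\ref{S}. Together these say that the operation $\mathcal{S}(-)$ preserves the class of Nisnevich sheaves that are homotopy invariant in one variable, and that on such a sheaf it induces a bijection on $F$-sections. The strategy is to iterate this along the tower $\mathcal{G} = \mathcal{S}^0(\mathcal{G}) \to \mathcal{S}^1(\mathcal{G}) \to \cdots$ whose colimit is $\mathcal{L}(\mathcal{G})$, and then to deduce the $\mathbb{A}^1_F$ statement from the $F$ statement by a diagram chase using the $\mathbb{A}^1$-invariance of $\mathcal{L}(\mathcal{G})$.

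First I would handle $\mathcal{G}(F) \to \mathcal{L}(\mathcal{G})(F)$. Starting from $\mathcal{S}^0(\mathcal{G}) = \mathcal{G}$, an induction using Corollary~\ref{S} shows that every $\mathcal{S}^n(\mathcal{G})$ is again a Nisnevich sheaf that is homotopy invariant in one variable. Hence Lemma~\ref{separable extension}, applied to $\mathcal{S}^n(\mathcal{G})$ in place of $\mathcal{G}$, makes each transition map $\mathcal{S}^n(\mathcal{G})(F) \to \mathcal{S}^{n+1}(\mathcal{G})(F)$ a bijection. Since $\mathcal{L}(\mathcal{G}) = \varinjlim_n \mathcal{S}^n(\mathcal{G})$, it then remains to identify $\mathcal{L}(\mathcal{G})(F)$ with $\varinjlim_n \mathcal{S}^n(\mathcal{G})(F)$; granting this, $\mathcal{L}(\mathcal{G})(F)$ is the filtered colimit of a sequence of bijections beginning at $\mathcal{G}(F)$, so the structure map $\mathcal{G}(F) \to \mathcal{L}(\mathcal{G})(F)$ is a bijection.

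For the second map I would run a diagram chase on the commutative square whose vertical arrows are the canonical maps $\mathcal{G}(-) \to \mathcal{L}(\mathcal{G})(-)$ and whose horizontal arrows are induced by the projection $\mathbb{A}^1_F \to \Spec F$:
$$
\begin{tikzcd}
\mathcal{G}(F) \arrow[r] \arrow[d] & \mathcal{G}(\mathbb{A}^1_F) \arrow[d] \\
\mathcal{L}(\mathcal{G})(F) \arrow[r] & \mathcal{L}(\mathcal{G})(\mathbb{A}^1_F)
\end{tikzcd}
$$
The top horizontal map is a bijection because $\mathcal{G}$ is homotopy invariant in one variable; the left vertical map is the bijection just established; and the bottom horizontal map is a bijection because $\mathcal{L}(\mathcal{G})$ is $\mathbb{A}^1$-invariant (Remark~\ref{propertiesS(X)}(3)), hence homotopy invariant in one variable. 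Commutativity then forces the remaining vertical map $\mathcal{G}(\mathbb{A}^1_F) \to \mathcal{L}(\mathcal{G})(\mathbb{A}^1_F)$ to be a bijection as well.

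The induction and the diagram chase are routine; the one step that deserves genuine care, and which I expect to be the main obstacle, is the identification $\mathcal{L}(\mathcal{G})(F) = \varinjlim_n \mathcal{S}^n(\mathcal{G})(F)$ — that is, that forming the sheaf-theoretic colimit $\mathcal{L}$ does not disturb sections over a field. I would justify this by recalling that, for the Nisnevich topology, the value of a (pre)sheaf on a finitely generated separable extension $F/k$ is computed as a filtered colimit over smooth models of $F$ and is therefore the stalk at the corresponding point of the site; such a stalk functor is insensitive to Nisnevich sheafification and commutes with filtered colimits, so it commutes in particular with the sequential colimit defining $\mathcal{L}$.
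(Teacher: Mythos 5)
Your proposal is correct and follows essentially the same route as the paper's proof: iterate Lemma~\ref{separable extension} and Corollary~\ref{S} to make each map $\mathcal{S}^n(\mathcal{G})(F) \to \mathcal{S}^{n+1}(\mathcal{G})(F)$ a bijection, pass to the colimit defining $\mathcal{L}(\mathcal{G})$, and then deduce the $\mathbb{A}^1_F$-statement from the $\mathbb{A}^1$-invariance of $\mathcal{L}(\mathcal{G})$ together with the homotopy invariance in one variable of $\mathcal{G}$. Your only addition is the explicit justification that field-valued sections (being a Nisnevich stalk at a generic point) commute with the filtered colimit and with sheafification, a point the paper leaves implicit.
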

\begin{proof}
The map $\mathcal{G}(F) \to \mathcal{S}(\mathcal{G})(F)$ is a bijection by Lemma \ref{separable extension}. Since $\mathcal{S}(\mathcal{G})$  is homotopy invariant in one variable by Corollary \ref{S}, the map $\mathcal{S}(\mathcal{G})(F) \to \mathcal{S}^2(\mathcal{G})(F)$ is a bijection. By induction we get, the map $$\mathcal{S}^n(\mathcal{G})(F) \to \mathcal{S}^{n+1}(\mathcal{G})(F)$$ is a bijection $\forall \ n$. Since $\mathcal{L}(\mathcal{G})$ is the colimit of $\mathcal{S}^n(\mathcal{G})$, the map $\mathcal{G}(F) \to \mathcal{L}(\mathcal{G})(F)$ is a bijection. As $\mathcal{L}(\mathcal{G})$ is an $\mathbb{A}^1$-invariant sheaf (\cite[ Theorem 2.13]{bhs}) and $\mathcal{G}$ satisfies $\mathcal{G}(F) \cong \mathcal{G}(\mathbb{A}^1_F)$, the map $\mathcal{G}(\mathbb{A}^1_F) \to \mathcal{L}(\mathcal{G})(\mathbb{A}^1_F)$ is a bijection.
\end{proof}
\begin{corollary} \label{comparison}
Suppose $\mathcal{X} \in \Delta^{op}PSh(Sm/k)$. The maps $\pi_0^{\mathbb{A}^1}(\mathcal{X})(F) \to \mathcal{L}(\pi_0^{\mathbb{A}^1}(\mathcal{X}))(F)$ and $\pi_0^{\mathbb{A}^1}(\mathcal{X})(\mathbb{A}^1_F) \to \mathcal{L}(\pi_0^{\mathbb{A}^1}(\mathcal{X}))(\mathbb{A}^1_F)$ are bijections for any finitely generated separable field extension $F/k$.
\end{corollary}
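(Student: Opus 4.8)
The plan is to read this off as an immediate application of Corollary \ref{inf} to the particular sheaf $\mathcal{G} := \pi_0^{\mathbb{A}^1}(\mathcal{X})$. Corollary \ref{inf} requires two hypotheses on $\mathcal{G}$: that it be a Nisnevich sheaf of sets, and that it be homotopy invariant in one variable. So I would first verify that both hold for $\pi_0^{\mathbb{A}^1}(\mathcal{X})$.

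The first is built into the definition: $\pi_0^{\mathbb{A}^1}(\mathcal{X})$ is by construction the Nisnevich sheaf associated to the presheaf $U \mapsto Hom_{\mathbf{H}(k)}(U, \mathcal{X})$, so it is a Nisnevich sheaf of sets. The second is exactly Example (2) above, namely \cite[Corollary 3.2]{c}: for any space $\mathcal{X}$ the projection induces a bijection $\pi_0^{\mathbb{A}^1}(\mathcal{X})(F) \to \pi_0^{\mathbb{A}^1}(\mathcal{X})(\mathbb{A}^1_F)$ for every finitely generated separable extension $F/k$. With these two facts recorded, I would simply invoke Corollary \ref{inf} with $\mathcal{G} = \pi_0^{\mathbb{A}^1}(\mathcal{X})$ to conclude that $\mathcal{G}(F) \to \mathcal{L}(\mathcal{G})(F)$ and $\mathcal{G}(\mathbb{A}^1_F) \to \mathcal{L}(\mathcal{G})(\mathbb{A}^1_F)$ are bijections, which is precisely the assertion.

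I do not expect a genuine obstacle at this step; the content of the corollary is entirely front-loaded into the earlier results. The real engine is the surjectivity of $\mathcal{G}(X) \to \mathcal{S}(\mathcal{G})(X)$ for $\dim X \le 1$ (Theorem \ref{sur}), resting on the discrete valuation ring case of Lemma \ref{DVR} together with a Zariski descent argument; this feeds into Corollary \ref{S}, showing that $\mathcal{S}$ preserves homotopy invariance in one variable, and hence into Corollary \ref{inf} by passing to the colimit defining $\mathcal{L}$. The only point requiring care is to confirm that the hypotheses of Corollary \ref{inf}, stated there for an arbitrary sheaf $\mathcal{G}$, are literally met by the specific sheaf $\pi_0^{\mathbb{A}^1}(\mathcal{X})$; since both are, no argument beyond the citation of \cite[Corollary 3.2]{c} is needed.
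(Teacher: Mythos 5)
Your proposal is correct and follows essentially the same route as the paper: the paper's own proof also reduces to Corollary \ref{inf} after establishing that $\pi_0^{\mathbb{A}^1}(\mathcal{X})$ is a Nisnevich sheaf that is homotopy invariant in one variable, the latter via \cite[Corollary 3.2]{c} (the paper merely spells out the intermediate step that the presheaf $U \mapsto Hom_{\mathbf{H}(k)}(U,\mathcal{X})$ is homotopy invariant in one variable by definition and agrees with its sheafification on sections over $F$ and $\mathbb{A}^1_F$). No gap; the citation of Example (2) carries exactly the content the paper itself re-derives.
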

\begin{proof}
The canonical morphism 
$$\Tilde{\pi}_0^{\mathbb{A}^1}(\sX)(\mathbb{A}^1_F) \to  \pi_0^{\mathbb{A}^1}(\sX)(\mathbb{A}^1_F)$$
is a bijection for any finitely generated separable field extension $F/k$ by \cite[Corollary 3.2]{c} (here $\Tilde{\pi}_0^{\mathbb{A}^1}(\sX)$ is the presheaf on $Sm/k$ defined as $U \in Sm/k \mapsto Hom_{\mathbf{H}(k)}(U, \mathcal{X})$). By definition, the presheaf $\Tilde{\pi}_0^{\mathbb{A}^1}(\sX)$ is homotopy invariant in one variable. Therefore, the sheaf $\pi_0^{\mathbb{A}^1}(\mathcal{X})$ is homotopy invariant in one variable.
Hence the corollary follows from Corollary \ref{inf}. 
\end{proof}
\begin{question}
For a proper scheme $X \in Sm/k$, we have $\mathcal{S}(X)(F) \cong \mathcal{S}^2(X)(F)$ for each finitely generated separable extension $F$ of $k$ \cite[Theorem 3.9]{bhs}. Thus $\mathcal{S}(X)$ is homotopy invariant in one variable. On the other hand, the real sphere $T$ in $\mathbb{A}^3_{\mathbb{R}}$ contains no non-constant $\mathbb{A}^1_{\mathbb{R}}$ but $\mathcal{S}^2(T)(\mathbb{R})$ is point (\cite[Theorem 4.3.4]{sawant}). Therefore it is natural to ask whether $\mathcal{S}(X)$ is homotopy invariant in one variable for any scheme $X \in Sm/k$ with $k = \bar{k}$.
\end{question}
\section{Birational Connected Components}
The first example of an $\mathbb{A}^1$-invariant sheaf associated to the $\mathbb{A}^1$-connected components of a scheme was constructed by Asok and Morel.
\begin{definition} \cite[Definition 6.1.1]{am}
A presheaf of sets $\mathcal{F}$ on $Sm/k$ is called birational if it satisfies the following properties:
\begin{enumerate}
 \item For $X \in Sm/k$ with irreducible components $X_1, X_2, \dots, X_n$ the canonical map
            $$\mathcal{F}(X) \to \prod_{i=1}^n \mathcal{F}(X_i)$$
is a bijection.
 \item For an open dense subscheme $U$ of $X$, the canonical morphism $\mathcal{F}(X) \to \mathcal{F}(U)$ is a bijection.
\end{enumerate}
\end{definition}
A birational presheaf of sets is always a Nisnevich sheaf \cite[Lemma 6.1.2]{am}.
 Let $X \in Sm/k$ be a proper scheme. There is a birational (thus homotopy invariant \cite[Theorem 6.1.7]{am}) sheaf $\pi_0^{b\mathbb{A}^1}(X)$ \cite[Section 6.2]{am} such that its sections over any finitely generated separable field extension $L$ of $k$ is the $\mathbb{A}^1$-chain connected component of $L$-rational points, i.e. $\mathcal{S}(X)(L) = \pi_0^{b\mathbb{A}^1}(X)(L)$ (\cite[Theorem 6.2.1]{am}).
\begin{remark}
\begin{enumerate}
\item For a proper scheme $X \in Sm/k$, there is a canonical morphism $\pi_0^{\mathbb{A}^1}(X) \to \pi_0^{b\mathbb{A}^1}(X)$ which is a bijection on sections over any finitely generated separable field extensions of $k$ \cite[Proposition 6.2.6]{am}, \cite[Corollary 3.10]{bhs} . 
\item $\pi_0^{b\mathbb{A}^1}$ is a birational invariant of smooth proper schemes \cite[Theorem 1]{ks}. However $\pi_0^{\mathbb{A}^1}$ is a not birational invariant sheaf of smooth proper schemes \cite[Example 4.8]{bhs}.
\end{enumerate}
\end{remark}
In this section in Theorem \ref{birational}, we will prove that $\pi_0^{b\mathbb{A}^1}(X)$ is isomorphic to the connected component sheaf of $X$ in the birational model structure (Proposition \ref{model}). This gives a proof of \cite[Theorem 4]{ks}. Same line of arguement is used in \cite[Proposition 1.9]{ck}. In \cite[Definition 2.6]{pp} Pablo also constructed birational unstable motivic homotopy category (equivalent construction by Theorem \ref{equivalent})
\begin{proposition} \label{model}
The left Bousfield localisation of the projective model structure on $\bigtriangleup^{op}PSh(Sm/k)$ with respect to the following class of maps  
$$\{U \xrightarrow{i} X \in Sm/k \text{ }| \text{ } i \text{ is an open immersion with dense image} \}$$ exists. It gives a model structure on $\bigtriangleup^{op}PSh(Sm/k)$ called the unstable birational model structure.
\end{proposition}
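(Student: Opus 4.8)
The plan is to realise this localisation as an instance of the general existence theorem for left Bousfield localisations. Recall that such a localisation of a model category $\mathcal{M}$ with respect to a class of morphisms $S$ is guaranteed to exist whenever $\mathcal{M}$ is left proper and either cellular (Hirschhorn) or combinatorial (Smith), provided that $S$ is a \emph{set} of morphisms; the localised structure then has the same cofibrations as $\mathcal{M}$, its fibrant objects are precisely the $S$-local objects, and its weak equivalences are the $S$-local equivalences. Thus it suffices to check that the projective model structure on $\Delta^{op}PSh(Sm/k)$ meets these hypotheses and that our class of open dense immersions can be replaced by a set.

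First I would record the relevant formal properties of the projective model structure. Since weak equivalences, fibrations, pushouts and pullbacks are all computed sectionwise in terms of the standard model structure on simplicial sets, and projective cofibrations are in particular sectionwise monomorphisms, left properness of $\Delta^{op}PSh(Sm/k)$ is inherited from the left properness of simplicial sets. Moreover, as a category of simplicial presheaves indexed by the (essentially) small category $Sm/k$ with values in the combinatorial model category of simplicial sets, the projective structure is itself combinatorial, equivalently cellular; it is cofibrantly generated, the generating (trivial) cofibrations being obtained by tensoring the generating (trivial) cofibrations of simplicial sets with the representable presheaves. It is also a simplicial model category, which will be convenient for identifying the local objects later.

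Next I would verify the set-theoretic condition. Because $Sm/k$ is essentially small, the isomorphism classes of open immersions $i\colon U \hookrightarrow X$ with dense image between objects of $Sm/k$ form a set; choosing one representative in each class and passing to the corresponding morphisms of representable simplicial presheaves under Yoneda yields a set $S$ of morphisms in $\Delta^{op}PSh(Sm/k)$. With $S$ a set and the projective model structure left proper and combinatorial (cellular), the existence theorem of Smith (respectively Hirschhorn) applies and produces the left Bousfield localisation $L_S\,\Delta^{op}PSh(Sm/k)$, which we name the unstable birational model structure.

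The only genuinely delicate point is the reduction of the previous paragraph: one must ensure that localising at the \emph{set} $S$ of representatives yields the same structure as localising at the full \emph{class} of open dense immersions. This is harmless because every open dense immersion of smooth $k$-schemes is, up to isomorphism of arrows, one of the chosen representatives, so the two classes determine the same local objects and hence the same local equivalences. Everything else is a direct appeal to the general machinery cited above, so I expect no further obstruction; the main care is simply in assembling these standard facts about $\Delta^{op}PSh(Sm/k)$ in the precise form the existence theorem requires.
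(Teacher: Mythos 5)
Your proposal is correct and follows essentially the same route as the paper, whose entire proof is a citation of Hirschhorn's existence theorem for left Bousfield localisations (\cite[Theorem 4.1.1]{hir}). You have simply made explicit the hypothesis-checking that the paper leaves implicit --- left properness and cellularity (equivalently, combinatoriality) of the projective model structure on $\Delta^{op}PSh(Sm/k)$, and the reduction of the class of dense open immersions to a set via essential smallness of $Sm/k$ --- all of which is accurate.
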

\begin{proof}
Existence of the left Bousfield localisation is proved in \cite[Theorem 4.1.1]{hir}.
\end{proof}
The resulting homotopy category associated to the birational model structure will be denoted by $\mathbf{H}_b(k)$.
\begin{definition}
For any space $\mathcal{X}$, the connected component presheaf associated to the birational model structure is defined as $U \mapsto Hom_{\mathbf{H}_b(k)}(U, \mathcal{X})$, for $U \in Sm/k$. It will be denoted by $\pi_0^b(\mathcal{X})$. 
\end{definition}
 The aim of this section is to prove the following result:
\begin{theorem} \label{birational}
There is an  isomorphism of presheaves: $\pi_0^{b\mathbb{A}^1}(X) \cong \pi_0^b(X)$, for $X \in Sm/k$ a proper scheme.
\end{theorem}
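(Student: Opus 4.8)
The plan is to establish the isomorphism $\pi_0^{b\mathbb{A}^1}(X) \cong \pi_0^b(X)$ by comparing sections over arbitrary smooth schemes and reducing to the known behaviour of both sheaves on field-valued points. Since both are presheaves on $Sm/k$, it suffices to produce a natural transformation and check it is a bijection on each $U \in Sm/k$. First I would recall that $\pi_0^{b\mathbb{A}^1}(X)$ is a birational sheaf (\cite[Section 6.2]{am}), so by the birational property its value on any irreducible $U$ is determined by its value on the generic point, i.e. $\pi_0^{b\mathbb{A}^1}(X)(U) \cong \pi_0^{b\mathbb{A}^1}(X)(\Spec K(U))$. The strategy is therefore to show that $\pi_0^b(X)$ enjoys the same reduction to generic points, and that on field extensions the two agree.

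The key technical step is to understand the fibrant replacement in the birational model structure of Proposition \ref{model}. I would argue that an object is birationally fibrant precisely when it is projectively fibrant and local with respect to the dense open immersions $U \xrightarrow{i} X$; concretely, a simplicially fibrant presheaf $\mathcal{Z}$ is birationally local if and only if for every dense open immersion $i : U \hookrightarrow X$ the induced map $\mathcal{Z}(X) \to \mathcal{Z}(U)$ is a weak equivalence of simplicial sets. The heart of the matter is to check that the birational sheaf $\pi_0^{b\mathbb{A}^1}(X)$, viewed as a discrete simplicial presheaf, is already birationally fibrant: it is a Nisnevich sheaf (so projectively fibrant up to the local model structure), it is homotopy invariant (\cite[Theorem 6.1.7]{am}), and its defining birational property is exactly the statement that $\mathcal{F}(X) \to \mathcal{F}(U)$ is a bijection for dense open $U$, which for a discrete object is precisely the required weak equivalence. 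Hence mapping into it computes $\mathbf{H}_b(k)$-morphisms.

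Granting this, for $U \in Sm/k$ one has
$$
\pi_0^b(X)(U) = \pi_0\,\mathrm{Map}_{\mathbf{H}_b(k)}(U, X) \cong \pi_0\,\mathrm{Map}(U, R_b X),
$$
where $R_b X$ is a birationally fibrant replacement of $X$. The plan is to identify $R_b X$ with (a fibrant model of) $\pi_0^{b\mathbb{A}^1}(X)$: the canonical map $X \to \pi_0^{b\mathbb{A}^1}(X)$ is a birational weak equivalence because it induces a bijection on sections over all fields (\cite[Theorem 6.2.1]{am}), and birational weak equivalences are detected on generic points via the dense-open locality just established. Evaluating both sides over $U$ then reduces, by the birational property, to sections over $K(U)$, where $\pi_0^b(X)(K(U)) = \pi_0^{b\mathbb{A}^1}(X)(K(U))$ by \cite[Theorem 6.2.1]{am}. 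Naturality in $U$ gives the isomorphism of presheaves. I expect the main obstacle to be the precise verification that $\pi_0^{b\mathbb{A}^1}(X)$ is birationally fibrant as a simplicial object, and in particular that detecting birational weak equivalences on function fields is legitimate in this localised model structure; this requires care because Bousfield localisation only guarantees a characterisation of local objects via mapping spaces, so one must translate the abstract locality condition into the concrete generic-point statement, presumably invoking the fact that the dense open immersions generate the same localisation as the restriction-to-generic-point maps.
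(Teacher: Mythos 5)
Your proposal contains a genuine gap at its central step. You assert that the canonical map $X \to \pi_0^{b\mathbb{A}^1}(X)$ is a birational weak equivalence \emph{because} it induces a bijection on sections over all fields, citing \cite[Theorem 6.2.1]{am}. That theorem says the opposite of what you need: it identifies $\pi_0^{b\mathbb{A}^1}(X)(L)$ with $\mathcal{S}(X)(L)$, the set of $\mathbb{A}^1$-chain equivalence classes of $L$-points, which is a (usually very proper) quotient of $X(L)$. For $X = \mathbb{P}^1$ the map $\mathbb{P}^1(L) \to \pi_0^{b\mathbb{A}^1}(\mathbb{P}^1)(L)$ collapses everything to a point, so it is nowhere near a bijection. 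Worse, even the weaker statement that $\mathbb{A}^1$-chain-equivalent points become equal in $\mathbf{H}_b(k)$ is not free: the localisation of Proposition \ref{model} inverts only dense open immersions, so one must first prove that the projections $\mathbb{A}^1_Y \to Y$ are birational weak equivalences. This is exactly Lemma \ref{p} of the paper (proved via homotopy invariance of birational sheaves, i.e. $\mathbb{P}^1$-invariance from \cite[Appendix A]{ks} composed with the dense open immersion $\mathbb{A}^1 \hookrightarrow \mathbb{P}^1$), and nothing in your outline supplies it. Finally, your principle that birational weak equivalences are ``detected on generic points'' is unsupported: Bousfield localisation characterises local \emph{objects} via mapping spaces, not weak equivalences between arbitrary objects, and identifying the fibrant replacement $R_bX$ with the discrete object $\pi_0^{b\mathbb{A}^1}(X)$ amounts to claiming $X$ is birationally $0$-truncated --- a strictly stronger statement than the theorem, which concerns only $\pi_0$. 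Your last reduction also quietly assumes $Hom_{\mathbf{H}_b(k)}(U,X) \cong \mathcal{S}(X)(K(U))$, which is essentially the statement being proved.

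The one ingredient you share with the paper is the fibrancy observation: a birational Nisnevich sheaf, viewed as a discrete simplicial presheaf, is fibrant in the birational model structure, and the paper uses precisely this to obtain a map $\theta : \pi_0^b(X) \to \pi_0^{b\mathbb{A}^1}(X)$ by applying $\pi_0^b$ to $\alpha : X \to \pi_0^{b\mathbb{A}^1}(X)$. But the paper never claims $\alpha$ is a birational weak equivalence. Instead it constructs a map in the \emph{other} direction categorically: by \cite[Theorem 6.6.3]{ks} one has $Hom_{S_b^{-1}Sm_{/k}}(U,X) \cong \pi_0^{b\mathbb{A}^1}(X)(U)$, and the universal property of the localisation $S_b^{-1}Sm_{/k}$ yields $\eta : \pi_0^{b\mathbb{A}^1}(X) \to \pi_0^b(X)$. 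Then $\eta \circ \theta = \mathrm{id}$ because $\pi_0^b(X)$ is birationally local, so $\theta$ is a monomorphism; and $\theta$ is an epimorphism because $\alpha$ is sectionwise surjective (each section over irreducible $U$ is an $\mathbb{A}^1$-class of a $k(U)$-point of the proper scheme $X$). This two-sided argument at the level of $\pi_0$ sidesteps any need for weak-equivalence detection or truncatedness. To repair your route you would need to import both the Kahn--Sujatha identification and Lemma \ref{p} (or Theorem \ref{equivalent}), at which point you have essentially reconstructed the paper's proof.
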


Let $f : U \to X \in Sm/k$ be a Nisnevich covering and  $U_{\bullet}$ be the corresponding \v{C}ech simplicial scheme. Here $U_n$ is the smooth scheme given by $U \times_X U \times_X \dots \times_X U$ (the product is taken $n+1$ times). Let $f : U_{\bullet} \to X$ be the corresponding map of simplicial schemes. We show that inverting the birational morphisms in $\mathbb{A}^1$-homotopy category is equivalent to only inverting the birational morphisms in the global projective model structure (Theorem \ref{equivalent}).

\begin{lemma} \label{cover}
The map $f : U_{\bullet} \to X$ is a birational weak equivalence (i.e., it is an isomorphism in $\mathbf{H}_{b}(k)$). 
\end{lemma}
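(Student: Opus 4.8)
The plan is to exhibit a dense open subscheme of $X$ over which the covering $f$ admits a section, and then to deduce the statement from the two-out-of-three property of weak equivalences in the localised model structure of Proposition \ref{model}. The crucial input is that \emph{every Nisnevich covering is birationally split}. Indeed, assume first that $X$ is irreducible with generic point $\eta$. The defining property of a Nisnevich covering furnishes a point $u \in U$ lying over $\eta$ with $\kappa(\eta) \xrightarrow{\sim} \kappa(u)$, that is, a section of $f$ over $\eta$; since $f$ is \'etale, hence of finite presentation, this rational section spreads out to an honest section $s : V \to U$ of $f$ over some dense open $V \subseteq X$. For a general $X \in Sm/k$ one performs this on each connected component and takes $V$ to be the disjoint union of the resulting dense opens.

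Next I would restrict the entire \v{C}ech diagram along the open immersion $V \hookrightarrow X$. As each $f_n : U_n \to X$ is \'etale, it is flat and open, so $U_n|_V := U_n \times_X V$ is open in $U_n$; and since the generic points of $U_n$ lie over generic points of $X$, all of which belong to $V$, each $U_n|_V \hookrightarrow U_n$ is a \emph{dense} open immersion of smooth schemes, hence lies in the class of maps we have localised at. Moreover $U_\bullet|_V$ is canonically identified with the \v{C}ech nerve of the covering $U|_V \to V$, which carries the section $s|_V$. A split augmented simplicial object admits an extra degeneracy, so the augmentation $U_\bullet|_V \to V$ is a simplicial homotopy equivalence, in particular a weak equivalence in the projective, and therefore in the birational, model structure.

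Finally I would assemble the commutative square with vertices $U_\bullet|_V,\ U_\bullet,\ V,\ X$, whose bottom edge $V \hookrightarrow X$ is a birational weak equivalence by the very definition of the localisation, whose left edge $U_\bullet|_V \to V$ is a weak equivalence by the previous step, and whose top edge $U_\bullet|_V \to U_\bullet$ is a degreewise dense open immersion. The one point requiring care is that a degreewise birational weak equivalence of simplicial schemes induces a birational weak equivalence of the associated simplicial presheaves: this holds because $U_\bullet$ is the homotopy colimit over $\Delta^{op}$ of the diagram $[n] \mapsto U_n$ in $PSh(Sm/k)$, and homotopy colimits preserve objectwise weak equivalences in the left proper combinatorial model category of Proposition \ref{model}. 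Granting this, the top edge is a birational weak equivalence, and then two-out-of-three applied to $f \circ (\text{top}) = (\text{bottom}) \circ (\text{left})$ forces $f : U_\bullet \to X$ to be one as well. The main obstacle is the birational splitting established in the first paragraph; everything afterwards is formal.
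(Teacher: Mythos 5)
Your proof is correct and follows essentially the same route as the paper: a section of the Nisnevich covering over a dense open $V \subseteq X$, restriction of the \v{C}ech nerve to $V$ (split, hence weakly equivalent to $V$), levelwise dense open immersions $U_n|_V \hookrightarrow U_n$, and two-out-of-three. The only cosmetic difference is that where you justify the levelwise-to-global step via the homotopy-colimit description of the diagonal, the paper simply cites \cite[Proposition 2.14]{mv}; the two justifications amount to the same standard fact.
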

\begin{proof}
Any Nisnevich covering has a section over a dense open set. Therefore there is an open dense set $V \subset X$ such that the restriction $f^{-1}(V) \to V$ has a section. We have the following commutative diagram in $\Delta^{op}PSh(Sm/k)$:
$$\begin{tikzcd}
& f^{-1}(V)_{\bullet} \arrow[r] \arrow[d]
& U_{\bullet} \arrow[d, "f"] \\
& V \arrow[r]
& X
\end{tikzcd}$$
where the left vertical map is induced by the restriction and the upper horizontal map is induced by the inclusion. The left vertical map is a sectionwise weak equivalence, since there is a section. The map $V \to X$ is an inclusion of dense open set, so it is a birational weak equivalence. As the map $f: U \to X$ is an \'{e}tale map, for each $n$, the $(n+1)$-fold product $f^{-1}(V) \times_V f^{-1}(V)...\times_V f^{-1}(V)$ is open and dense in $U \times_X U..\times_X U$ fitting in the pullback square:
$$\begin{tikzcd}
& f^{-1}(V) \times_V..\times_Vf^{-1}(V) \arrow[r] \arrow[d]
& U \times_X U..\times_X U \arrow[d] \\
& V \arrow[r]
& X
\end{tikzcd}$$
Therefore the morphism $f^{-1}(V)_{\bullet} \to U_{\bullet}$ is a birational weak equivalence \cite[Proposition 2.14]{mv}. Hence $f: U_{\bullet} \to X$ is a birational weak equivalence.  
\end{proof}
\begin{corollary}
Any Nisnevich weak equivalence is a birational weak equivalence.
\end{corollary}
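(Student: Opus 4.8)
The plan is to realize both the Nisnevich-local and the birational model structures as left Bousfield localizations of one common ambient model structure — the global projective model structure on $\Delta^{op}PSh(Sm/k)$ — and then to compare the two classes of maps being inverted. First I would recall the standard fact that the Nisnevich-local projective model structure is itself the left Bousfield localization of the global projective model structure at the set
$$S = \{\, U_\bullet \to X \mid f : U \to X \text{ a Nisnevich covering} \,\}$$
of \v{C}ech nerves of Nisnevich covers. This uses that the Nisnevich topology is generated by a complete, bounded and regular cd-structure, so that \v{C}ech descent for covers already forces full Nisnevich descent and no general hypercovers are needed; with this identification the Nisnevich weak equivalences are precisely the $S$-local equivalences, which I denote $W_{\mathrm{Nis}} = W_S$.

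Next I would invoke Lemma \ref{cover}: every map in $S$ is a birational weak equivalence, so $S \subseteq W_b$, where $W_b$ is the class of weak equivalences of the birational model structure of Proposition \ref{model}, the left Bousfield localization at the class $B$ of dense open immersions. The formal heart of the argument is then the following elementary property of left Bousfield localizations. A $B$-local (fibrant) object $W$ sends every $B$-local equivalence to a weak equivalence of homotopy mapping spaces; since $S \subseteq W_b$, such a $W$ in particular inverts every map of $S$, hence is automatically $S$-local and therefore $(S\cup B)$-local. Conversely an $(S\cup B)$-local object is trivially $B$-local, so the two classes of local objects coincide. As a left Bousfield localization is determined by its class of local objects, this yields the equality of model structures $L_{S\cup B} = L_B$, and in particular $W_{S\cup B} = W_b$. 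Since trivially $W_S \subseteq W_{S\cup B}$, I conclude
$$W_{\mathrm{Nis}} = W_S \subseteq W_{S\cup B} = W_b,$$
which is exactly the assertion that every Nisnevich weak equivalence is a birational weak equivalence.

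I expect essentially all the genuine content to reside in the first step, namely the identification of the Nisnevich-local structure with the \v{C}ech localization at $S$, which rests on the cd-structure formalism (and on the fact that the birational localization already contains the global projective weak equivalences, so that $W_b$ is a reasonable target). Once that input is granted, the remainder is a purely formal manipulation of Bousfield localizations combined with Lemma \ref{cover}, with no further obstacle.
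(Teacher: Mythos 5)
Your proof is correct and follows essentially the same route as the paper: both identify the Nisnevich-local projective model structure as the left Bousfield localization of the global projective structure at the \v{C}ech nerves of Nisnevich coverings (via the cited result of Dugger--Hollander--Isaksen) and then apply Lemma \ref{cover}. The only difference is that you spell out the formal Bousfield-localization step ($S \subseteq W_b$ implies $W_S \subseteq W_b$, via comparing classes of local objects), which the paper leaves implicit in ``the result follows.''
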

\begin{proof}
The local projective model structure on $\Delta^{op}PSh(Sm/k)$ is the left Bousfield localisation of the projective model structure at the class of \v{C}ech hypercovers (\cite[Theorem 6.2, Example A.11]{dhi}),
\[
\{U_{\bullet} \to X  \mid U \to X \text{ is a Nisnevich covering}\}
\]
Since the map $U_{\bullet} \to X$ is a birational weak equivalence by Lemma \ref{cover}, the result follows.
\end{proof}
\begin{lemma} \label{p}
For every $X \in Sm/k$, the projection map $X \times \mathbb{A}^1 \to X$ is a birational weak equivalence.
\end{lemma}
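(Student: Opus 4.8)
The plan is to detect birational weak equivalences by mapping into fibrant objects and then to invoke the Asok--Morel theorem that birational sheaves are $\mathbb{A}^1$-invariant. Recall that in the left Bousfield localisation of Proposition \ref{model}, a map between (projectively) cofibrant objects is a birational weak equivalence precisely when it induces a weak equivalence of mapping spaces into every fibrant object $\mathcal{Z}$ of the birational model structure (\cite{hir}). The representable presheaves $X$ and $X\times\mathbb{A}^1$ are projectively cofibrant, and for $\mathcal{Z}$ fibrant one has $\mathrm{Map}(X,\mathcal{Z})\simeq\mathcal{Z}(X)$ and $\mathrm{Map}(X\times\mathbb{A}^1,\mathcal{Z})\simeq\mathcal{Z}(X\times\mathbb{A}^1)$, the map induced by the projection being $pr^{*}\colon\mathcal{Z}(X)\to\mathcal{Z}(X\times\mathbb{A}^1)$. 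Thus it suffices to show that $pr^{*}$ is a weak equivalence of simplicial sets for every fibrant $\mathcal{Z}$.

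First I would identify the homotopy presheaves of such a $\mathcal{Z}$ as birational sheaves. By definition of the localisation, $\mathcal{Z}(Y)\to\mathcal{Z}(U)$ is a weak equivalence for every dense open immersion $U\hookrightarrow Y$, so each $\underline{\pi}_n(\mathcal{Z})$ satisfies the dense-open invariance (property (2) of \cite[Definition 6.1.1]{am}). Moreover, by the corollary preceding this lemma every Nisnevich weak equivalence is a birational weak equivalence, so $\mathcal{Z}$ is in particular Nisnevich-local; hence its homotopy presheaves are Nisnevich sheaves and take disjoint unions to products, which is property (1) (recall that a smooth scheme is the disjoint union of its irreducible components). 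Therefore each $\underline{\pi}_n(\mathcal{Z})$ is a birational sheaf of sets (for $n=0$) or of groups (for $n\ge 1$).

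The key input is then \cite[Theorem 6.1.7]{am}: every birational sheaf is $\mathbb{A}^1$-invariant. Applying this to the homotopy sheaves shows that $\underline{\pi}_n(\mathcal{Z})(X)\to\underline{\pi}_n(\mathcal{Z})(X\times\mathbb{A}^1)$ is a bijection for all $n$. For $n=0$ this says $pr^{*}$ is bijective on $\pi_0$, so every component of $\mathcal{Z}(X\times\mathbb{A}^1)$ is hit by a basepoint coming from $\mathcal{Z}(X)$; for such basepoints the statement for $n\ge 1$ gives isomorphisms on all higher homotopy groups. Hence $pr^{*}\colon\mathcal{Z}(X)\to\mathcal{Z}(X\times\mathbb{A}^1)$ is a weak equivalence, and $pr$ is a birational weak equivalence.

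The substantive ingredient, and the only genuine obstacle, is the $\mathbb{A}^1$-invariance of birational sheaves \cite[Theorem 6.1.7]{am}, whose proof rests on a specialisation argument at generic points; here it is used as a black box. Everything else is bookkeeping: the identification of mapping spaces with sections, the fact that a localised fibrant object inherits Nisnevich descent from the already-established comparison of weak equivalences, and the handling of basepoints, which causes no difficulty once the $\pi_0$-statement is known. One could alternatively first use the dense open immersion $X\times\mathbb{A}^1\hookrightarrow X\times\mathbb{P}^1$ (a birational weak equivalence by definition of the localising class) and reduce by two-out-of-three to the projection $X\times\mathbb{P}^1\to X$, but this reformulation still requires the dimension-dropping supplied by \cite[Theorem 6.1.7]{am}, so it does not simplify the essential point.
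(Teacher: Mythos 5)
Your argument is correct in substance, but it follows a genuinely different route from the paper's, even though both rest on the same substantive input---the invariance of birational sheaves. The paper never touches mapping spaces, fibrant objects, or homotopy presheaves: for an arbitrary space $\mathcal{X}$ it forms the presheaf of sets $\mathcal{F}_{\mathcal{X},X} \colon Y \mapsto \mathrm{Hom}_{\mathbf{H}_b(k)}(Y \times X, \mathcal{X})$, observes that this is a birational sheaf essentially by definition of the localising class, applies the Kahn--Sujatha result \cite[Appendix A]{ks} to get a bijection $\mathcal{F}_{\mathcal{X},X}(\mathrm{Spec}\,k) \to \mathcal{F}_{\mathcal{X},X}(\mathbb{P}^1_k)$, concludes by Yoneda (naturality in $\mathcal{X}$) that the projection $X \times \mathbb{P}^1_k \to X$ is an isomorphism in $\mathbf{H}_b(k)$, and finishes by two-out-of-three with the dense open immersion $X \times \mathbb{A}^1_k \hookrightarrow X \times \mathbb{P}^1_k$, which is a birational weak equivalence by definition. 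So the $\mathbb{P}^1$-detour that you mention and set aside in your final paragraph is exactly the paper's route, with \cite[Appendix A]{ks} playing the role that \cite[Theorem 6.1.7]{am} plays for you. What the paper's Yoneda trick buys is exemption from all the model-categorical bookkeeping your proof must carry out (localisation theory of \cite{hir}, identification of homotopy presheaves, basepoints); what your route buys is a statement proved directly for $\mathbb{A}^1$ at the level of mapping spaces, with no appeal to properness of $\mathbb{P}^1$ beyond what is hidden in the black box.

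Two details in your write-up deserve correction, though neither is a fatal gap. First, the claim that the homotopy presheaves of a Nisnevich-local object are Nisnevich sheaves is false in general (Eilenberg--MacLane objects on a sheaf with nonvanishing higher cohomology give counterexamples); fortunately it is also unnecessary, since all you need for property (1) is that finite disjoint unions go to products, and that does follow from Nisnevich descent---use the elementary distinguished square splitting a clopen decomposition together with $\mathcal{Z}(\emptyset) \simeq \ast$---after which sheafhood is automatic from \cite[Lemma 6.1.2]{am}. Second, for $n \geq 1$ the presheaf $\underline{\pi}_n(\mathcal{Z})$ only makes sense relative to a basepoint, whereas \cite[Theorem 6.1.7]{am} concerns presheaves on $Sm/k$; the clean fix is to apply that theorem to the presheaves $W \mapsto \pi_n\bigl(\mathcal{Z}(X \times W), pr^{*}z\bigr)$ for each vertex $z$ of $\mathcal{Z}(X)$, which are birational by the same reasoning. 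Note that this ``map out of $X \times (-)$'' device is precisely the construction the paper's proof is built on.
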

\begin{proof}
Suppose $\mathcal{X} \in \Delta^{op}PSh(Sm/k)$ and $X \in Sm/k$. Consider the presheaf of sets $\mathcal{F}_{\mathcal{X}, X}$ on $Sm/k$ defined as $Y \mapsto Hom_{\mathbf{H}_b(k)}(Y \times X, \mathcal{X})$. Then $\mathcal{F}_{\mathcal{X},X}$ is a birational sheaf on $Sm/k$. Therefore we have a bijection $ \mathcal{F}_{\mathcal{X}, X}(Spec \ k) \to \mathcal{F}_{\mathcal{X}, X}(\mathbb{P}^1_k)$ \cite[Appendix A]{ks}. This implies the projection map $\mathbb{P}^1_k \times X \to X$ is an isomorphism in $\mathbf{H}_b(k)$. Composing it with the birational map $\mathbb{A}^1_k \times X \hookrightarrow \mathbb{P}^1_k \times X$ (induced by the natural open immersion $\mathbb{A}^1_k \hookrightarrow \mathbb{P}^1_k$), we get that the projection map $X \times \mathbb{A}^1_k \to X$ is an isomorphism in $\mathbf{H}_b(k)$.
\end{proof}
\begin{theorem} \label{equivalent}
Any $\mathbb{A}^1$-weak equivalence is a birational weak equivalence. Therefore the unstable birational model structure is equivalent to the motivic unstable birational model structure in \cite[Definition 2.6]{pp}.
\end{theorem}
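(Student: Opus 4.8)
The plan is to observe that all the model structures in play are left Bousfield localisations of the projective model structure on $\Delta^{op}PSh(Sm/k)$ and to compare them through their generating classes of maps. Recall that the motivic unstable model structure $\mathbf{H}(k)$ is the left Bousfield localisation, call it $L_S$, of the projective model structure at the class
$$S = \{\,U_\bullet \to X \mid U \to X \text{ a Nisnevich covering}\,\} \cup \{\,X \times \mathbb{A}^1_k \to X \mid X \in Sm/k\,\},$$
the \v{C}ech hypercovers producing the Nisnevich-local structure and the projections imposing $\mathbb{A}^1$-invariance. The birational model structure of Proposition \ref{model} is the localisation $L_T$ at the class $T$ of open dense immersions.

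First I would record that every generator in $S$ is already a birational weak equivalence: the \v{C}ech hypercovers by the Corollary to Lemma \ref{cover} (any Nisnevich weak equivalence is a birational weak equivalence), and the projections $X \times \mathbb{A}^1_k \to X$ by Lemma \ref{p}. I would then invoke the standard comparison principle for two left Bousfield localisations $L_S$ and $L_T$ of one (left proper, cellular) model category, namely that the following are equivalent: every $S$-local weak equivalence is a $T$-local weak equivalence; every map in $S$ is a $T$-local weak equivalence; and every $T$-local object is $S$-local (this rests on the same machinery of \cite{hir} already used in Proposition \ref{model}). Having checked the middle condition, the first one follows, and it says precisely that every $\mathbb{A}^1$-weak equivalence is a birational weak equivalence. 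This is the first assertion of the theorem.

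For the equivalence of model structures I would use that localising twice amounts to localising at the union: Pablo's motivic unstable birational structure $L_T(\mathbf{H}(k))$ of \cite[Definition 2.6]{pp} equals $L_{S\cup T}$ applied to the projective structure. Since every map of $S$ is a $T$-local weak equivalence, each $T$-local object already sees the maps of $S$ as weak equivalences on mapping spaces, so the $T$-local objects coincide with the $(S\cup T)$-local objects. As $L_T$ and $L_{S\cup T}$ have the same cofibrations (both being localisations of the projective structure) and now the same local objects, they have the same weak equivalences; hence $L_{S\cup T} = L_T$, i.e. the motivic birational model structure coincides with the birational model structure of Proposition \ref{model}.

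The conceptual content is entirely carried by Lemma \ref{cover}, its corollary, and Lemma \ref{p}; the theorem is the formal statement that inverting a sub-class of the birational weak equivalences adds nothing new. The only delicate point I anticipate is the bookkeeping of the generating classes together with the left-properness and cellularity hypotheses needed to run the Bousfield localisation machinery, but these are exactly the hypotheses under which $\Delta^{op}PSh(Sm/k)$ and its localisations were set up, so no new input is required.
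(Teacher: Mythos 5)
Your proposal is correct and follows essentially the same route as the paper: both identify the $\mathbb{A}^1$-model structure as the Bousfield localisation of the projective structure at \v{C}ech hypercovers and projections (the paper cites \cite[Proposition 8.1]{du} for this), check via Lemma \ref{cover} and Lemma \ref{p} that these generators are birational weak equivalences, and conclude by the formal localisation comparison. Your write-up merely makes explicit the bookkeeping (the equivalence of ``generators are $T$-local equivalences'' with ``$S$-local equivalences are $T$-local equivalences,'' and the identification of Pablo's structure with $L_{S\cup T}$) that the paper compresses into ``Hence the result follows.''
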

\begin{proof}
The left Bousfield localisation of the projective model structure (universal model structure) on $\Delta^{op}PSh(Sm/k)$ at the class of the \v{C}ech hypercovers and the projection maps $\mathbb{A}^1_X \to X \in Sm/k$ gives the $\mathbb{A}^1$-model structure \cite[Proposition 8.1]{du}. Therefore, the $\mathbb{A}^1$-weak equivalences are generated by the \v{C}ech hypercovers and the projection maps $\mathbb{A}^1_X \to X$. Both are birational weak equivalences [Lemma \ref{cover} and Lemma \ref{p}]. Hence the result follows.
\end{proof}
\subsection{Proof of the Theorem \ref{birational}}

\begin{proof}
Suppose $U \in Sm/k$ is irreducible. Then,  $\pi_0^{b\mathbb{A}^1}(X)(U)= \mathcal{S}(X)(k(U))$ by \cite[Definition 6.2.5]{am}. Recall the fine birational category of smooth $k$-schemes $S_b^{-1}Sm_{/k}$ \cite[Section 1.7]{ks} which is defined as the localisation of $Sm/k$ with respect to the class of birational morphisms $S_b$. By \cite[Theorem 6.6.3]{ks}, we have the following natural bijection $$Hom_{S_b^{-1}Sm_{/k}}(U, X) \cong \pi_0^{b\mathbb{A}^1}(X)(U),$$ for each $U \in Sm/k$. The Yoneda embedding of $Sm/k$ in $\Delta^{op}PSh(Sm/k)$ as representable constant simplicial presheaf induces a functor $\eta : S_b^{-1}Sm_{/k} \to \mathbf{H}_b(k)$ because of the universal property of localisation \cite[Section 1]{gz}). The functor $\eta$ is universal and it factors the functor $\pi : Sm/k \to \mathbf{H}_b(k)$. This gives the map $$Hom_{S_b^{-1}Sm_{/k}}(U,X) \to Hom_{\mathbf{H}_b(k)}(U, X).$$ Thus we have a morphism $\eta: \pi_0^{b\mathbb{A}^1}(X) \to \pi_0^b(X)$.
 \par
Consider the following commutative diagram of presheaves on $Sm/k$,
 $$\begin{tikzcd}
& X \ar[r, "\alpha"] \ar[d, "\pi" left]
& \pi_0^{b\mathbb{A}^1}(X) \ar[d , "\cong"] \\
& \pi_0^b(X) \ar[r] \ar[ur, dotted, "\theta"]
& \pi_0^b(\pi_0^{b\mathbb{A}^1}(X))
\end{tikzcd}$$
 induced by the natural transformation $Id \to \pi_0^b()$.  The top horizontal morphism of presheaves $\alpha : X \to \pi_0^{b\mathbb{A}^1}(X)$ is induced by the canonical functor $\alpha : Sm/k \to S_b^{-1}Sm_{/k}$. The right most vertical map is an isomorphism, since $\pi_0^{b\mathbb{A}^1}(X)$ is a fibrant object in the birational model structure. This gives a morphism $\theta : \pi_0^b(X) \to \pi_0^{b\mathbb{A}^1}(X)$. 
The morphism $\eta \circ \theta \circ \pi$ is same as $\eta \circ \alpha$ and $\eta \circ \alpha$ is the natural morphism $\pi$.  The morphism $\pi: X \to \pi_0^b(X)$ induces a bijection $$Hom_{PSh(Sm/k)}(\pi_0^b(X), \pi_0^b(X)) \cong Hom_{PSh(Sm/k)}(X, \pi_0^b(X)),$$ since $\pi_0^b(X)$ is birational local. This gives $\eta \circ \theta$ is the identity morphism. So $\theta$ is a monomorphism. On the other hand, the morphism $\alpha$ factors through $\theta$ and the morphism $\alpha$ is sectionwise surjective, since $\pi_0^{b\mathbb{A}^1}(X)$ is a birational sheaf and its section over $U$ is the $\mathbb{A}^1$-equivalence classes of $k(U)$-rational points of $X$. Hence $\theta$ is an epimorphism and consequently it is an isomorphism. 
\end{proof}
\section{Existence of $\mathbb{A}^1$ and $\mathbb{A}^1$-connectedness}
The aim of this section is to relate $\mathbb{A}^1$-homotopy theory with the existence of affine lines in a variety. The main result in this section is Theorem \ref{main} where we establish this. By the phrase ``there is an $\mathbb{A}^1$ in $X$'', we mean the existence of a non-constant morphism from $\mathbb{A}^1_k$ to $X$. First we recall three important notions of being dominated by the images of affine lines. For this section, we will assume our base field $k$ is an algebraically closed field. 
\begin{definition} \label{several A1}
Suppose $X$ is a $k$-variety, where $k$ is a field of characteristic zero.
\begin{enumerate}
\item $X$ is said to be \textbf{dominated by images of $\mathbb{A}^1$} if there is an open dense subset $U$ of $X$ such that for every $p \in U(k)$, there is an $\mathbb{A}^1$ in $X$ through $p$ \cite[\S 1]{km}. 
 \item $X$ is said to be \textbf{$\mathbb{A}^1$-uniruled or log-uniruled} if there is a dominant generically finite morphism $H: \mathbb{A}^1_k \times_k Y \to X$ for some $k$-variety $Y$. 
\item $X$ is said to be \textbf{$\mathbb{A}^1$-ruled} if there is a Zariski open dense subset $U$ of $X$ such that $U$ is isomorphic to $\mathbb{A}^1_k \times_k Z$ for some $k$-variety $Z$ \cite[Definition 1]{at}.
\end{enumerate}
\end{definition}

\begin{remark} \label{dominated and uniruled}
Here we will describe few important relations between the above notions. By definition, the $\mathbb{A}^1$-ruled varieties are $\mathbb{A}^1$-uniruled and $\mathbb{A}^1$-uniruled varieties are dominated by images of $\mathbb{A}^1$. Suppose the field $k$ is uncountable, then the smooth varieties dominated by images of $\mathbb{A}^1$ are $\mathbb{A}^1$-uniruled. This is essentially similar to the fact that a smooth projective variety dominated by images of $\mathbb{P}^1$ is uniruled \cite[Chapter IV, Proposition 1.3]{kollar}. Indeed, given a smooth $k$-variety $X$ with fixed smooth completion $\bar{X}$ with boundary $D = \bar{X} \setminus X$, non-constant morphisms from $\mathbb{A}^1_k$ to $X$ are parametrized by a certain subscheme $Mor((\mathbb{P}^1_k, \infty), (\bar{X}, D))$ of the hom scheme $Mor_k(\mathbb{P}^1_k, \bar{X})$ parametrizing morphisms $f: \mathbb{P}^1_k \to \bar{X}$ such that $f^{-1}(D) = \{\infty\}$. There is a canonical evaluation morphism $ev: Mor((\mathbb{P}^1_k, \infty), (\bar{X}, D)) \times_k (\mathbb{P}^1_k \setminus \{\infty\}) \to X$ which is dominant, as $X$ is dominated by images of $\mathbb{A}^1$. Since $k$ is uncountable and $Mor((\mathbb{P}^1_k, \infty), (\bar{X}, D))$ has only countably many irreducible components and there is a dense open subset $U$ of $X$ which is contained in the image of $ev$, we get an irreducible component $Y$ of $Mor((\mathbb{P}^1_k, \infty), (\bar{X}, D))$ such that the restriction to $Y$ of $ev$ 
is a dominant morphism $\mathbb{A}^1_k \times_k Y \to X$. In case of $k$ is countable field, the equivalence of $(1)$ and $(2)$ is not known. \par
 The varieties which are $\mathbb{A}^1$-uniruled have negative logarithmic Kodaira dimension \cite[Proposition 1]{iit}. If $X$ is a surface and $k$ is uncountable, $(1)$ is equivalent to the negativity of logarithmic Kodaira dimension $\bar{\kappa}(X)$ \cite[Theorem 1.1]{km}. In case of smooth affine $k$-surface, if $\bar{\kappa}(X) = -\infty$, $X$ is $\mathbb{A}^1$-ruled \cite[\S 4, \S 5]{misu}. It is not known in general whether varieties dominanted by images of $\mathbb{A}^1$ have negative logarithmic Kodaira dimension if $k$ is countable. 

\it{Thus for a smooth affine $k$-surface $X$ over an uncountable algebraically closed field $k$ of characteristic zero, we have $(1), (2), (3)$ are equivalent and these are equivalent to the negativity of logarithmic Kodaira dimension}.  However in higher dimensions, being $\mathbb{A}^1$-ruled is a stronger notion than dominated by images of $\mathbb{A}^1$ \cite[Proposition 9]{at}. 
\end{remark}
Suppose $\mathcal{F}$ is a Nisnevich sheaf of sets on $Sm/k$ and $W \in Sm/k$, $f \in \mathcal{F}(W)$.
\begin{definition}
An element $\alpha \in \mathcal{F}(Spec \ k)$ is in the image of $f$ if $\exists \ \gamma \in W(Spec \ k)$ such that the composition $Spec \ k \to W \xrightarrow{f} \mathcal{F}$ is $\alpha$.
\end{definition}
\begin{definition}
A homotopy $H \in \mathcal{F}(\mathbb{A}^1_W)$ is said to be non-constant if $H(0) \neq H(1) \in \mathcal{F}(W)$, where $H(0)$ and $H(1)$ are induced by the $0$-section and the $1$-section from $W$ to $\mathbb{A}^1_W$ respectively.
\end{definition}
\begin{remark} \label{useful remark}
 Let $\mathcal{F}$ be a sheaf and $X \in Sm/k$. A section $\alpha \in S(\mathcal{F})(X)$ is given by a  Nisnevich covering $W \to X$, a section $\gamma \in \mathcal{F}(W)$ and a Nisnevich covering $W' \to W \times_{X} W$ such that $ p_1^*(\gamma)|_{W'} $ and $p_2^*(\gamma)|_{W'}$  in $\mathcal{F}(W')$ are joined by a chain of $\mathbb{A}^1$-homotopies (where $p_1, p_2 : W \times_{X} W \to W$ are the projection maps). If  $ p_1^*(\gamma)|_{W'} = p_2^*(\gamma)|_{W'}$, then $\gamma$ can be lifted to some element $\alpha' \in \mathcal{F}(X)$ and in this case $\alpha'$ maps to $\alpha$ via the canonical morphism $\mathcal{F} \to S(\mathcal{F})$.  Otherwise, we will get an element $H \in  \mathcal{F}(\mathbb{A}^1_{W'})$ such that $ p_1^*(\gamma)|_{W'} = H(0) \neq H(1)$ as sections. This is essentially the data of ghost homotopy mentioned in \cite[Definition 3.2]{bhs}. 
\end{remark}
\begin{condition} \label{SIGMA}
Suppose $X, W \in Sm/k$, $\alpha \in X(k)$ and $n \geq 0$. A homotopy $H \in \mathcal{S}^n(X)(\mathbb{A}^1_W)$ is said to satisfy the condition $*(W, \alpha)$, if $H$ satisfies the following properties:
\begin{enumerate}
 \item $H$ is a non-constant homotopy.
 \item $H(0)$ factors through $X$ i.e. there is a morphism $\psi : W \to X$ such that the following diagram commutes:
  $$\begin{tikzcd}
 W \ar[r, "\psi"] \ar[d, "i_0"] &X \ar[d]\\
\mathbb{A}^1_{W} \ar[r,"H"] 
&\mathcal{S}^n(X)
\end{tikzcd}$$ 
Here $i_0: W \to \mathbb{A}^1_W$ is the $0$-section and the right vertical map is the canonical epimorphism $X \to \mathcal{S}^n(X)$.
  \item $\alpha \in \overline{Im(H(0))}$ (By (2), $H(0): W \to X$). 
\end{enumerate}
\end{condition}
\begin{proposition} \label{track}
Suppose $X, W \in Sm/k$, $\alpha \in X(k)$ and $n \geq 1$. Let $H \in \mathcal{S}^n(X)(\mathbb{A}^1_W)$ be a homotopy, where $W$ is irreducible and $H$ satisfies $*(W, \alpha)$. Then there is $W^{\prime} \in Sm/k$ irreducible and a homotopy $H^{\prime} \in \mathcal{S}^m(X)(\mathbb{A}^1_{W^{\prime}})$ for some $m < n$ such that $H^{\prime}$ satisfies $*(W^{\prime},\alpha)$.
 \end{proposition}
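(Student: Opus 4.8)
The plan is to descend by a single level, taking $m=n-1$, using the canonical epimorphism $q:\mathcal{S}^{n-1}(X)\to\mathcal{S}^n(X)=\mathcal{S}(\mathcal{S}^{n-1}(X))$ together with the ghost-homotopy description of sections of $\mathcal{S}(-)$ recalled in Remark \ref{useful remark}. Write $\mathcal{G}=\mathcal{S}^{n-1}(X)$ and let $q':X\to\mathcal{S}^{n-1}(X)$ be the canonical map, and set $\phi:=q'\circ\psi\in\mathcal{G}(W)$. Then $q(\phi)=H(0)$ and, crucially, $\phi$ already factors through $X$ via $\psi$, with $\alpha\in\overline{Im(\psi)}$. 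The whole point is to transport this factorisation at time $0$ down one level while keeping a non-constant homotopy.

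First I would apply Remark \ref{useful remark} to the section $H\in\mathcal{S}(\mathcal{G})(\mathbb{A}^1_W)$, producing a Nisnevich cover $\Omega\to\mathbb{A}^1_W$, a lift $\gamma\in\mathcal{G}(\Omega)$ of $H|_\Omega$, and the associated ghost data on a cover of $\Omega\times_{\mathbb{A}^1_W}\Omega$. In the favourable case the two pullbacks of $\gamma$ agree, so $\gamma$ descends to an honest homotopy $\tilde H\in\mathcal{S}^{n-1}(X)(\mathbb{A}^1_W)$ with $q(\tilde H)=H$; since $q$ commutes with the $0$- and $1$-sections, $q(\tilde H(0))=H(0)\neq H(1)=q(\tilde H(1))$, so $\tilde H$ is automatically non-constant. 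It then remains to repair the time-$0$ value. Both $\tilde H(0)$ and $\phi$ lift $H(0)$ along $q$, so by the definition of $\mathcal{S}(-)$ they are joined, over some Nisnevich cover $W_1\to W$, by a chain of $\mathbb{A}^1$-homotopies $\phi=s_0,s_1,\dots,s_r=\tilde H(0)$ in $\mathcal{S}^{n-1}(X)$. If $\tilde H(0)=\phi$ I simply take $H'=\tilde H$. Otherwise the chain is non-trivial, hence some connecting homotopy is non-constant; the first such homotopy $G_i$ has $G_i(0)=s_{i-1}=\phi$ because every earlier one is constant, and I take $H'=G_i$. In either sub-case $H'(0)=\phi=q'\circ\psi$ factors through $X$; choosing $W'$ to be an irreducible component of the relevant cover dominating $W$ and restricting, $H'(0)$ becomes $q'\circ(\psi\circ g)$ for the induced dominant map $g:W'\to W$, whence $\overline{Im(H'(0))}=\overline{\psi(g(W'))}\supseteq\overline{\psi(W)}\ni\alpha$, so $*(W',\alpha)$ holds at level $n-1$. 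One must still check that non-constancy survives restriction to the dominant component $W'$, which I expect to follow from $W'$ dominating $W$.

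The hard part will be the remaining ghost case, where $\gamma$ does not descend and Remark \ref{useful remark} instead yields a non-constant ghost homotopy $H_{\mathrm{ghost}}\in\mathcal{S}^{n-1}(X)(\mathbb{A}^1_{W'})$ with $H_{\mathrm{ghost}}(0)=p_1^*\gamma|_{W'}$. This gives condition (1) for free, but the time-$0$ value is now read off from the arbitrary lift $\gamma$ of $H$, which need not factor through $X$, so conditions (2) and (3) of $*(W',\alpha)$ are not automatic. The key obstacle is therefore to control the lift near the $0$-section: I would try to choose the cover $\Omega\to\mathbb{A}^1_W$ and the lift $\gamma$ compatibly with the canonical lift $\phi$ over the $0$-section $W\hookrightarrow\mathbb{A}^1_W$ — equivalently, to solve the relative lifting problem for $q$ with prescribed boundary value $\phi$ — and then to shrink the base $W'$ so that it lies over a neighbourhood of the $0$-section and over a point specialising to $\alpha$. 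Once $\gamma$ can be arranged to agree with $\phi$ along the $0$-section, the same chain-extraction device used above forces the time-$0$ value of the resulting homotopy to factor through $X$ with $\alpha$ in the closure of its image, completing the descent. The technical heart of the proposition is exactly this compatibility of the one-level lift with the factorisation through $X$ at time $0$, realised over a base of the required form $\mathbb{A}^1_{W'}$.
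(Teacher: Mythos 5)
Your ``favourable'' (descent) case is handled correctly, and your chain-extraction device --- orienting the chain to start at $\phi$ and taking the first non-constant homotopy, so that its time-$0$ value is forced to equal $\phi$ --- is exactly the right move; the paper uses the same trick. The genuine gap is in the ghost case, which you yourself flag as the technical heart, and your proposed repair does not close it. First, the ``relative lifting problem'' --- choosing the cover $\Omega \to \mathbb{A}^1_W$ and the lift $\gamma \in \mathcal{S}^{n-1}(X)(\Omega)$ of $H$ so that $\gamma$ agrees with $\phi$ along the $0$-section --- is not solvable in general: an epimorphism of Nisnevich sheaves only provides \emph{local} lifts with no control on a prescribed closed subscheme, and two lifts of $H(0)$ are merely locally chain-homotopic, not equal; there is no group structure or fibrancy that lets you modify $\gamma$ to match $\phi$ on $\Omega \times_{\mathbb{A}^1_W} W$. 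Second, and more fatally, even if such a $\gamma$ existed, the non-constant ghost homotopies produced by Remark \ref{useful remark} live over a cover of $\Omega \times_{\mathbb{A}^1_W} \Omega$, and the time-$0$ value of the first non-constant one is (a restriction of) $p_1^*\gamma$ over that whole cover. Agreement of $\gamma$ with $\phi$ along the $0$-section --- a \emph{closed} subscheme --- does not make $p_1^*\gamma$ factor through $X$ on any component of that cover, and shrinking the base ``to a neighbourhood of the $0$-section'' cannot help: a section of $\mathcal{S}^{n-1}(X)$ that factors through $X$ on a closed subscheme need not factor through $X$ on any neighbourhood of it. So conditions (2) and (3) of $*(W',\alpha)$ remain unverified exactly where the real difficulty lies.

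The paper avoids this obstruction by a structurally different first step: instead of lifting $H$ one level, it uses the epimorphism $X \to \mathcal{S}^n(X)$ to lift $H$ Nisnevich-locally all the way to an honest scheme morphism $\phi \colon V \to X$. Then the two pullbacks $p_1^*\phi$, $p_2^*\phi$ on (a cover of) $V \times_{\mathbb{A}^1_W} V$ are themselves honest morphisms to $X$, so every comparison homotopy between them --- whether at level $n-1$ or at the first lower level $t$ where they differ --- automatically has time-$0$ value factoring through $X$; condition (3) is then secured by a case analysis on whether $\alpha \in \overline{Im(\phi)}$ and on which irreducible components of $V$ have $\alpha$ in the closure of their image (the paper's Cases 1--3). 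Note this produces some $m < n$ that need not equal $n-1$; your insistence on a single-level descent $m = n-1$ is precisely what forces you into the unsolvable relative lifting problem. If you replace your one-level lift by the full lift to $X$, your chain-extraction argument goes through and essentially reproduces the paper's proof.
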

\begin{proof}
The morphism $X \to \mathcal{S}^n(X)$ is an epimorphism and $H \in \mathcal{S}^n(X)(\mathbb{A}^1_W)$. Thus, 
\begin{enumerate}
  \item  There is a Nisnevich covering $f:V \to \mathbb{A}^1_W$,
  \item  There is a morphism $\phi: V \to X$ such that the following diagram commutes:
 $$\begin{tikzcd}
 V \ar[r, "\phi"] \ar[d, "f"] &X \ar[d]\\
\mathbb{A}^1_{W} \ar[r,"H"] 
&\mathcal{S}^n(X)
\end{tikzcd}$$ 
\end{enumerate}
The morphism $\phi$ gives an element of $\mathcal{S}^{n-1}(X)(V)$ via the epimorphism $X \to \mathcal{S}^{n-1}(X)$. The elements $p_1^*(\phi)$ and $p_2^*(\phi)$ are same in $\mathcal{S}^n(X)(V \times_{\mathbb{A}^1_W} V)$ (where $p_1, p_2: V \times_{\mathbb{A}^1_W} V \to V$ are the projection maps). Therefore, there is a Nisnevich covering $V^{\prime} \to V\times_{\mathbb{A}^1_W} V$ and there is a chain of non-constant (since $H$ is a non-constant homotopy, so $p_1^{*}(\phi)|_{V^{\prime}} \neq p_2^*(\phi)|_{V^{\prime}} \in \mathcal{S}^{n-1}(X)(V^{\prime})$ by Remark \ref{useful remark}) $\mathbb{A}^1$-homotopies $G_1, G_2, \dots, G_k \in \mathcal{S}^{n-1}(X)(\mathbb{A}^1_{V^{\prime}})$ such that 
$$G_1(0) = p_1^*(\phi)|_{V^{\prime}} \text{ and } G_k(1) = p_2^*(\phi)|_{V^{\prime}}.$$ \par
  Suppose $V = \coprod_{i=1}^n V_i$, $V_i$-s are the irreducible components of $V$. Then $V \times_{\mathbb{A}^1_W} V$ is the union of $V_i \times_{\mathbb{A}^1_W} V_j$ varying $i$ and $j$ (note that, each $V_i \times_{\mathbb{A}^1_W} V_j$ is non-empty since $W$ is irreducible) and for each irreducible component $V_0$ of $V^{\prime}$ which is also a connected component, there are dominant maps (\'etale maps) from $V_0$ to $V_i$ (for some $i$) induced by the projection maps $p_1$ and $p_2$. We have the following cases. \par
\textbf{Case 1:} Suppose $\alpha \notin \overline{Im(\phi)}$. Consider the following diagram:  
$$\begin{tikzcd}[column sep=50pt, row sep=50 pt]
W^{\prime} \ar[r] \ar[d] &V \ar[r, "\phi"] \ar[d, "f" near start] &X \ar[d]\\
W \ar[r,"i_0" below] \ar[urr, dotted, "H(0)" near start] &\mathbb{A}^1_{W} \ar[r, "H" below] &\mathcal{S}^n(X)
\end{tikzcd}$$
where $i_0 : W \to \mathbb{A}^1_W$ is the $0$-section. Here the left square is cartesian and the lower triangle is commutative, since $H(0)$ factors through $X$. We have $\phi|_{W^{\prime}} \neq H(0)|_{W^{\prime}}$ as morphisms to $X$, since $\alpha \notin \overline{Im(\phi)}$. But they are the same in $\mathcal{S}^n(X)(W^{\prime})$. Suppose $m \geq 0$ is the least such that these two maps are the same in $\mathcal{S}^{m+1}(X)(W^{\prime})$. Thus there is a Nisnevich covering $W^{\prime \prime} \to W^{\prime}$ and there is a non-constant homotopy (by Remark \ref{useful remark}) $H^{\prime} \in \mathcal{S}^{m}(X)(\mathbb{A}^1_{W^{\prime \prime}})$, such that $H^{\prime}(0) = H(0)|_{W^{\prime \prime}}$. There is an irreducible component (say $W_0$) of $W^{\prime \prime}$ such that $H^{\prime}|_{\mathbb{A}^1_{W_0}}$ is non-constant. Since the map $W_0 \to W$ is dominant and $\alpha \in \overline{Im(H(0))}$, $\alpha \in \overline{Im(H^{\prime}|_{\mathbb{A}^1_{W_0}}(0))}$. \par
\textbf{Case 2:} Suppose $\alpha \in \overline{Im(\phi)}$. Moreover assume that there is an irreducible component (say $V_0$) of $V^{\prime}$ that maps to $V_i \times_{\mathbb{A}^1_W} V_j$ (for some $i$ and $j$) with $\alpha \in \overline{\phi(V_i)}$ and $p_1^*(\phi)|_{V_0} \neq p_2^*(\phi)|_{V_0}$. Then there is some $t$ such that $G_t|_{\mathbb{A}^1_{V_0}}$ is the required non-constant homotopy (if for each $t$, $G_t|_{\mathbb{A}^1_{V_0}}$ is constant, then $p_1^*(\phi)$ and $p_2^*(\phi)$ agree in $V_0$). Since the map $V_0 \to V_i$ is dominant, $\alpha \in \overline{Im(G_t|_{\mathbb{A}^1_{V_0}}(0))}$. In particular, if $\alpha \in \overline{\phi(V_i)}$ for every $i$, then we can take any irreducible component $V_0$ of $V^{\prime}$ such that $G_1|_{\mathbb{A}^1_{V_0}}$ is the non-constant homotopy. \par
\textbf{Case 3:} Suppose $\alpha \in \overline{Im(\phi)}$ and there is a $j$ such that $\alpha \notin \overline{\phi(V_j)}$. If needed, renumbering $V_l$-s, we can assume that $\alpha \in \overline{\phi(V_1)}, \overline{\phi(V_2)}, \dots, \overline{\phi(V_i)}$ and $\alpha \notin \overline{\phi(V_{i+1})}, \dots, \overline{\phi(V_n)}$. Moreover we can assume that for each irreducible component $V_0$ of $V^{\prime}$ that maps to $V_m \times_{\mathbb{A}^1_W} V_l$ with $m \leq i$ we have, $p_1^*(\phi)|_{V_0} = p_2^*(\phi)|_{V_0}$ in $\mathcal{S}^{n-1}(X)(V_0)$. Otherwise the conclusion follows from Case 2. Thus we have for every $m \leq i$, 
\begin{gather*}
p_1^*(\phi)|_{V_m \times_{\mathbb{A}^1_W} V_l} = p_2^*(\phi)|_{V_m \times_{\mathbb{A}^1_W} V_l} \in \mathcal{S}^{n-1}(X)(V_m \times_{\mathbb{A}^1_W} V_l).
\end{gather*}
    Suppose there is a $t < n-1$ and there is an irreducible component $W_0$ of $V^{\prime}$ that maps to $V_m \times_{\mathbb{A}^1_W} V_l$ for some $m,l$ with $m \leq i$ such that $$p_1^*(\phi)|_{W_0} \neq p_2^*(\phi)|_{W_0} \in \mathcal{S}^{t}(X)(W_0).$$ Since $p_1^*(\phi)|_{W_0}$ and $p_2^*(\phi)|_{W_0}$ are the same in $\mathcal{S}^{n-1}(X)(W_0)$, we choose $t$ such that $p_1^*(\phi)|_{W_0}$ is same with $p_2^*(\phi)|_{W_0}$ in $\mathcal{S}^{t+1}(X)(W_0)$. Then there is a Nisnevich covering $V^{\prime \prime} \to W_0$ and a non-constant homotopy (by Remark \ref{useful remark}) $H^{\prime} \in \mathcal{S}^t(X)(\mathbb{A}^1_{V^{\prime \prime}})$ such that $H^{\prime}(0) = p_1^*(\phi)|_{V^{\prime \prime}}$. So there is an irreducible component $W^{\prime}_0$ of $V^{\prime \prime}$ such that $H^{\prime}|_{\mathbb{A}^1_{W^{\prime}_0}}$ is non-constant. Since the map $W_0^{\prime} \to V_m$ is dominant, we have $\alpha \in \overline{Im(H^{\prime}|_{\mathbb{A}^1_{W^{\prime}_0}})}$. \par 
    On the other hand, if there is no such $t$ then for every irreducible component $V_0$ of $V^{\prime}$ that maps to $V_m \times_{\mathbb{A}^1_W} V_l$ for some $m \leq i$, we have $p_1^*(\phi)|_{V_0} = p_2^*(\phi)|_{V_0}$ as morphisms to $X$. Therefore we have, 
\begin{gather*} 
p_1^*(\phi)|_{V_m \times_{\mathbb{A}^1_W} V_l} = p_2^*(\phi)|_{V_m \times_{\mathbb{A}^1_W} V_l}, \ \forall m \leq i \ \forall l
\end{gather*}
as morphisms to $X$. But then all $\overline{\phi(V_l)}$ are the same for every $l$, since $p_1: V_m \times_{\mathbb{A}^1_W} V_l \to V_m$ and $p_2: V_m \times_{\mathbb{A}^1_W} V_l \to V_l$ are dominant maps. It is a contradiction, since we have assumed there is some $j$ such that $\alpha \notin \overline{\phi(V_j)}$. \par
Therefore, the proposition is proved.
\end{proof}
\begin{remark}
For any Nisnevich sheaf of sets $\mathcal{F}$, using the same argument as in the proof of Proposition \ref{track}, we have the following : Suppose there is a non-constant homotopy $H \in \mathcal{S}(\mathcal{F})(\mathbb{A}^1_W)$ for some $W \in Sm/k$ such that the image of $H(0)$ contains some $\alpha \in \mathcal{F}(Spec \ k)$. Then there is a non-constant homotopy $H^{\prime} \in \mathcal{F}(\mathbb{A}^1_{W^{\prime}})$ for some $W^{\prime} \in Sm/k$ such that the image of $H^{\prime}(0)$ contains $\alpha$.
\end{remark}

The next theorem is the main theorem of this section. It shows that being $\mathbb{A}^1$-connected gives $\mathbb{A}^1$-s in a variety.

\begin{theorem}\label{main}
Suppose $X \in Sm/k$ is $\mathbb{A}^1$-connected with $dim(X) \geq 2$ and $k$ is an algebraically closed field. Then one of the following holds :
\begin{enumerate} 
\item $\forall \ x \in X(k)$, there is a non-constant $\mathbb{A}^1$ through $x$.
\item  There is a non-constant homotopy $H : \mathbb{A}^1_Y \to X$, for some irreducible $Y \in Sm/k$, such that the dimension of the closure of the image of $H$ is at least $2$.
\end{enumerate}
 In particular for a surface $X \in Sm/k$, where $k$ is of characteristic zero, if $X$ is $\mathbb{A}^1$-connected, then $X$ is dominated by images of $\mathbb{A}^1$.
 \end{theorem}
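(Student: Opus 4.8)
The plan is to reduce the statement to a geometric analysis of level-$0$ homotopies obtained by descending the tower $\mathcal{S}^n(X)$. First I would record that $\mathbb{A}^1$-connectedness forces total collapse over $k$. Since the canonical map $X\to\mathcal{L}(X)$ factors through $\pi_0^{\mathbb{A}^1}(X)$ by Remark \ref{propertiesS(X)}, one has $\mathcal{L}(X)\cong\mathcal{L}(\pi_0^{\mathbb{A}^1}(X))$, so Corollary \ref{comparison} (with $F=k$) gives $\mathcal{L}(X)(k)=\pi_0^{\mathbb{A}^1}(X)(k)=\ast$, i.e. $\varinjlim_n \mathcal{S}^n(X)(k)=\ast$. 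Now fix an arbitrary $\alpha=x_0\in X(k)$ and a second point $x_1\neq x_0$ (available since $dim(X)\geq 2$); they agree at a finite stage, and I take $n$ minimal with $x_0=x_1$ in $\mathcal{S}^n(X)(k)$. Because $k=\bar k$, every Nisnevich cover of $\Spec k$ splits, so $\mathcal{S}(\mathcal{F})(k)=\mathcal{F}(k)/\sim$ with $\sim$ generated by $\mathbb{A}^1_k$-homotopies; hence the distinct classes of $x_0,x_1$ in $\mathcal{S}^{n-1}(X)(k)$ are joined by a chain in $\mathcal{S}^{n-1}(X)(\mathbb{A}^1_k)$, the first non-constant term of which, call it $H$, satisfies $H(0)=\bar x_0$. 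Taking $W=\Spec k$ (irreducible) and noting that $H(0)$ is the constant section at $x_0$ with $\overline{Im(H(0))}=\{\alpha\}$, the homotopy $H$ satisfies Condition \ref{SIGMA}, i.e. $*(\Spec k,\alpha)$, at level $n-1$.

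Next I would descend. Applying Proposition \ref{track} repeatedly (it lowers the level while preserving $*(-,\alpha)$ and irreducibility of the base) transports $H$ down to a non-constant $H'\in\mathcal{S}^0(X)(\mathbb{A}^1_{W'})=\Hom(\mathbb{A}^1_{W'},X)$ over an irreducible $W'$, still satisfying $*(W',\alpha)$; when $n-1=0$ this step is vacuous. Thus $H':\mathbb{A}^1_{W'}\to X$ is an honest non-constant morphism with $\alpha\in\overline{Im(H'(0))}$. In this way every $\alpha\in X(k)$ acquires a non-constant homotopy $H'_\alpha$ in whose image-closure $\alpha$ lies.

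The dichotomy then comes from $C_\alpha:=\overline{Im(H'_\alpha)}$. If for some $\alpha$ one has $dim(C_\alpha)\geq 2$, then $H'_\alpha$ is precisely the homotopy required for conclusion (2). Otherwise $dim(C_\alpha)=1$ for every $\alpha$ (it is $\geq 1$ by non-constancy), and I would argue conclusion (1): $C_\alpha$ is an irreducible curve containing $\alpha$, and $H'_\alpha$ factors through the normalization $\nu:\tilde C_\alpha\to C_\alpha$ because $\mathbb{A}^1_{W'}$ is normal. Non-constancy of $H'_\alpha$ forces some fibrewise restriction $H'_\alpha|_{\mathbb{A}^1_{w}}:\mathbb{A}^1\to\tilde C_\alpha$ to be non-constant (if all were constant then $H'_\alpha(0)=H'_\alpha(1)$), so the smooth curve $\tilde C_\alpha$ admits a non-constant map from $\mathbb{A}^1$; completing to $\mathbb{P}^1\to\overline{\tilde C_\alpha}$ and using Riemann--Hurwitz, together with the absence of non-constant maps $\mathbb{A}^1\to\mathbb{G}_m$ or to a hyperbolic curve, identifies $\tilde C_\alpha$ with $\mathbb{A}^1$ or $\mathbb{P}^1$. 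Either way there is a non-constant $\mathbb{A}^1\to\tilde C_\alpha$ through any chosen point of $\nu^{-1}(\alpha)$, and composing with the finite map $\nu$ produces a non-constant $\mathbb{A}^1$ in $X$ through $\alpha$, giving (1).

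Finally, for the surface statement I would check that both alternatives yield domination by images of $\mathbb{A}^1$ in the sense of Definition \ref{several A1}(1): alternative (1) allows $U=X$, while in alternative (2) the morphism $H:\mathbb{A}^1_Y\to X$ is dominant (its image-closure is $2$-dimensional $=dim(X)$); restricting to the dense open locus of $Y$ where the fibrewise maps $\mathbb{A}^1\to X$ are non-constant (non-empty, else $H(0)=H(1)$) and pushing forward exhibits a dense open $U\subseteq X$ each of whose points lies on a non-constant $\mathbb{A}^1$. The main obstacle is exactly this level-$0$ analysis: Proposition \ref{track} does not keep $H(0)$ constant during the descent, so $H'_\alpha(0)$ may dominate the curve $C_\alpha$ instead of equalling $\alpha$; the normalization plus Riemann--Hurwitz argument is what repairs this, producing an $\mathbb{A}^1$ through $\alpha$ no matter where $H'_\alpha(0)$ sits.
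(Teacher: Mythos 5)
Your proposal is correct and follows essentially the same route as the paper's own proof: triviality of $\mathcal{L}(X)$ on $k$-points, extraction of a first non-constant homotopy at a finite stage of the $\mathcal{S}^n$-tower, repeated descent to level $0$ via Proposition \ref{track}, and the identical Chevalley-type argument for the surface statement. The only differences are organizational (you run the construction for every $\alpha$ and then dichotomize on $\dim \overline{Im(H'_{\alpha})}$, where the paper argues by contradiction from a point with no $\mathbb{A}^1$ through it) and that you spell out, via normalization and Riemann--Hurwitz, the step the paper compresses into its final sentence, namely that a one-dimensional image-closure containing $\alpha$ and a non-constant $\mathbb{A}^1$ would force a non-constant $\mathbb{A}^1$ through $\alpha$ itself.
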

 \begin{proof}
As $X$ is $\mathbb{A}^1$-connected, the sheaf $\mathcal{L}(X)$ is trivial \cite[Corollary 2.18]{bhs}. Suppose that $\exists \ \alpha \in X(k)$ such that there is no non-constant $\mathbb{A}^1$ through $\alpha$. Choose $\beta \in X(k)$ with $\beta \neq \alpha$. Also $\alpha \neq \beta \in \mathcal{S}(X)(Spec \ k)$, but $\alpha = \beta \in \mathcal{L}(X)(Spec \ k)$. Therefore, there is an $n \geq 1$ such that $\alpha = \beta \in \mathcal{S}^{n+1}(X)(Spec \ k)$ and $\alpha \neq \beta \in \mathcal{S}^n(X)(Spec \ k)$.  So there is a non-constant homotopy $H \in \mathcal{S}^{n}(X)(\mathbb{A}^1_k)$ such that $H(0)= \alpha$. Hence by applying Proposition \ref{track} repeatedly, there exists some $Y \in Sm/k$ irreducible, along with a non-constant homotopy $H^{\prime}: \mathbb{A}^1_Y \to X$, such that $\alpha \in \overline{Im(H^{\prime}(0))}$. Since $k$ is algebraically closed, the $k$-rational points are dense, so $H^{\prime}(0) \neq H^{\prime}(1)$ at some $k$-rational point. Therefore the image of $H^{\prime}$ contains a non-constant $\mathbb{A}^1$ and we have $\overline{Im(H^{\prime})}$ contains $\alpha$. Therefore $\overline{Im(H^{\prime})}$ is of dimension at least $2$, as we have assumed that there is no non-constant $\mathbb{A}^1$ through $\alpha$.  \par
Since $H^{\prime}$ is a non-constant homotopy, by shrinking $Y$ we can assume that $H^{\prime}(0,y) \neq H^{\prime}(1, y), \ \forall \ y \in Y(k)$ and the dimension of the closure of image is at least $2$. Thus if $X$ is a surface, the map $H^{\prime}$ is dominant. So there is a non-empty open subset $U$ of $X$ such that $U$ is contained in the image of $H^{\prime}$. Each $u \in U(k)$ has the preimage $(t, y) \in \mathbb{A}^1_Y$ for some $k$-point $y$ in $Y$. Therefore, $u$ is in the image of $H^{\prime}|_{\mathbb{A}^1_k \times \{y\}}$. Thus $X$ is dominated by images of $\mathbb{A}^1$. 
\end{proof}
\begin{corollary}\label{A1}
Suppose $X \in Sm/k$ is $\mathbb{A}^1$-connected and $k$ is an algebraically closed field. Then there is a non-constant $\mathbb{A}^1$ in $X$.  
\end{corollary}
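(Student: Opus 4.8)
The plan is to read the conclusion off Theorem \ref{main}, which does all the substantive work, and then patch the low-dimensional cases that fall outside its hypothesis $dim(X) \geq 2$. Concretely, I would split into $dim(X) \geq 2$, $dim(X) = 1$, and the degenerate $dim(X) = 0$ case. Note first that $\mathbb{A}^1$-connectedness forces $X$ to be connected (otherwise $\pi_0^{\mathbb{A}^1}(X)$ would split as a disjoint union and could not equal $Spec \ k$), so $X$ is a connected smooth $k$-variety; and if $dim(X) = 0$ then $X \cong Spec \ k$, which carries no non-constant $\mathbb{A}^1$, so the statement is to be read for $dim(X) \geq 1$.

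For $dim(X) \geq 2$ the result is immediate from Theorem \ref{main}. That theorem offers two alternatives. In alternative (1) every $k$-point of $X$ lies on a non-constant $\mathbb{A}^1$, so picking any $x \in X(k)$ already exhibits a non-constant $\mathbb{A}^1$ in $X$. In alternative (2) we are given a non-constant homotopy $H : \mathbb{A}^1_Y \to X$ with $Y \in Sm/k$ irreducible; by definition ``non-constant'' means $H(0) \neq H(1)$ as morphisms $Y \to X$, and since $k$ is algebraically closed its rational points are dense in $Y$, so there is $y \in Y(k)$ with $H(0,y) \neq H(1,y)$. The restriction $H|_{\mathbb{A}^1_k \times \{y\}} : \mathbb{A}^1_k \to X$ then takes distinct values at $0$ and $1$, hence is a non-constant $\mathbb{A}^1$ in $X$. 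So both alternatives deliver the conclusion.

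It remains to treat $dim(X) = 1$, which Theorem \ref{main} does not cover, and here I would argue by contraposition using $\mathbb{A}^1$-rigidity. Suppose the smooth connected curve $X$ contained no non-constant $\mathbb{A}^1$. Fixing a smooth projective completion $\bar{X}$, any non-constant $\mathbb{A}^1_k \to X$ would extend, by the valuative criterion, to a non-constant $\mathbb{P}^1_k \to \bar{X}$, forcing $\bar{X} \cong \mathbb{P}^1_k$; writing $X = \mathbb{P}^1_k \setminus S$ and examining the boundary $S$, one sees a non-constant $\mathbb{A}^1$ exists precisely when $|S| \leq 1$ (i.e. $X \cong \mathbb{P}^1_k$ or $X \cong \mathbb{A}^1_k$). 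Hence a curve without such a map is either an open subscheme of a positive-genus proper curve or an open subscheme of $\mathbb{G}_m$, and in both cases $\mathbb{A}^1$-rigid by Remark \ref{Open-Closed}(3),(5). For an $\mathbb{A}^1$-rigid scheme Remark \ref{Open-Closed}(4) gives $\pi_0^{\mathbb{A}^1}(X) \cong X$, and combining this with the hypothesis that $X$ is $\mathbb{A}^1$-connected forces $X \cong Spec \ k$, contradicting $dim(X) = 1$. Thus a one-dimensional $\mathbb{A}^1$-connected $X$ contains a non-constant $\mathbb{A}^1$.

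The main obstacle is conceptual rather than computational: essentially everything is already encoded in Theorem \ref{main}, so for $dim(X) \geq 2$ the corollary is little more than a repackaging, and the only genuinely separate point is the one-dimensional case, where the short rigidity classification above (together with the harmless degenerate case $X \cong Spec \ k$) must be supplied by hand. I would therefore expect the write-up to be brief, with the only care needed in the density argument producing a non-constant fibre in alternative (2) and in citing the correct parts of Remark \ref{Open-Closed} for the curve case.
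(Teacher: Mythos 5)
Your proposal is correct and follows essentially the paper's route: the paper states this corollary with no separate proof, as an immediate consequence of Theorem \ref{main}, and your argument for $\dim(X) \geq 2$ (pick any $k$-point in alternative (1); in alternative (2) use density of $k$-points of $Y$ to find a fibre $\mathbb{A}^1_k \times \{y\}$ on which $H$ is non-constant) is exactly the reasoning already embedded in the paper's proof of that theorem. Your supplementary treatment of $\dim(X) = 1$ via $\mathbb{A}^1$-rigidity (Remark \ref{Open-Closed}) and your caveat that $X = Spec \ k$ is a degenerate exception to the literal statement are correct additions covering cases the paper leaves implicit.
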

\begin{corollary}\label{A2}
Suppose $X \in Sm/k$ is $\mathbb{A}^1$-connected surface and $k$ is an uncountable algebraically closed field of characteristic zero. Then $\bar{\kappa}(X) = -\infty$.
\end{corollary}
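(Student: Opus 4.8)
The plan is to deduce the corollary immediately from Theorem \ref{main} together with the geometric equivalences recorded in Remark \ref{dominated and uniruled}. Since $X$ is a smooth surface over the algebraically closed field $k$ of characteristic zero, the hypotheses of Theorem \ref{main} are satisfied (indeed $dim(X) = 2 \geq 2$). The final assertion of that theorem then yields that $X$ is \textbf{dominated by images of $\mathbb{A}^1$} in the sense of Definition \ref{several A1}(1): there is a dense open $U \subseteq X$ through each $k$-point of which there passes a non-constant $\mathbb{A}^1$.

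The next step is to upgrade this pointwise statement to a genuine uniruling, and here the uncountability of $k$ enters in an essential way. As explained in Remark \ref{dominated and uniruled}, over an uncountable field the scheme $Mor((\mathbb{P}^1_k, \infty), (\bar{X}, D))$ parametrizing the relevant maps has only countably many irreducible components, so at least one of them, say $Y$, must evaluate dominantly onto $X$; restricting the evaluation morphism to $Y$ produces a dominant generically finite map $\mathbb{A}^1_k \times_k Y \to X$, i.e. $X$ is $\mathbb{A}^1$-uniruled. Finally, every $\mathbb{A}^1$-uniruled variety has negative logarithmic Kodaira dimension by \cite[Proposition 1]{iit}, so $\bar{\kappa}(X) = -\infty$.

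Alternatively, and more directly in the surface case, one may bypass the uniruling step and simply quote \cite[Theorem 1.1]{km}: for a smooth surface over an uncountable algebraically closed field of characteristic zero, being dominated by images of $\mathbb{A}^1$ is \emph{equivalent} to the negativity of $\bar{\kappa}(X)$. Either route closes the argument, and both are already assembled for us in Remark \ref{dominated and uniruled}.

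The only genuine subtlety is that uncountability of $k$ is indispensable: it is precisely what permits the passage from ``there is an $\mathbb{A}^1$ through every point'' to a single dominant family $\mathbb{A}^1 \times Y \to X$. Over a countable field this passage is not known (see the penultimate sentence of Remark \ref{dominated and uniruled}), so the hypothesis cannot simply be dropped. Beyond verifying that all the hypotheses of the cited results (uncountable, algebraically closed, characteristic zero, $\dim = 2$) are in force, which they are, there is no further obstacle.
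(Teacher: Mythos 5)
Your proposal is correct and follows essentially the same route as the paper: the paper's proof likewise applies the final assertion of Theorem \ref{main} to get that $X$ is dominated by images of $\mathbb{A}^1$, invokes the equivalence of $(1)$ and $(2)$ of Definition \ref{several A1} (valid over an uncountable field, as in Remark \ref{dominated and uniruled}) to upgrade this to $\mathbb{A}^1$-uniruledness, and concludes negativity of $\bar{\kappa}(X)$ from \cite[Proposition 1]{iit}. Your alternative shortcut via \cite[Theorem 1.1]{km} is also already recorded in that remark, so it is a cosmetic rather than substantive difference.
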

\begin{proof}
This follows because of the equivalence of $(1)$ and $(2)$ in Definition \ref{several A1} in this case and \cite[Proposition 1]{iit}.
\end{proof}
\begin{remark}
In Corollary \ref{A1}, the assumption that $k$ is an algebraically closed field, is necessary. The unit sphere $T$ in $\mathbb{A}^3_{\mathbb{R}}$ given by the equation $x^2 + y^2 + z^2 = 1$ is $\mathbb{A}^1$-connected (\cite[Theorem 4.3.4]{sawant}), however there is no non-constant $\mathbb{A}^1_{\mathbb{R}}$ in $T$. If $X \in Sm/k$ is an $\mathbb{A}^1$-connected surface and the base field is countable (e.g. $\bar{\mathbb{Q}}$), even though $X$ is dominated by the images of $\mathbb{A}^1$'s,  we don't know whether $X$ has negative logarithmic Kodaira dimension.
\end{remark}

\section{Characterisation of Affine Space} \label{charac}
 In this section we give some characterisations of $\mathbb{A}^n_k$ for the cases $n=2,3$ and $4$ using $\mathbb{A}^1$-homotopy theory. We give here the proof of the main theorem (Theorem \ref{main theorem}). A variant of the main theorem (Theorem \ref{main topology}) and its consequences are also given in this section. We thank the referee and Prof. Amartya Kumar Dutta for this version of Theorem \ref{main theorem}. \par
\begin{proof}[\textbf{Proof of the Theorem \ref{main theorem}}]
 One direction is clear by definition of $\mathbb{A}^1$-contractibility. Conversely, suppose $X$ is $\mathbb{A}^1$-contractible. We can consider $k$ as a subfield of an uncountable algebraically closed field $L$. As $L/k$ is the filtered colimit of its finitely generated sub-extensions over $k$, therefore by \cite[Corollary 1.24]{mv} the base change $X_L:=X \times_k L$ is $\mathbb{A}^1$-contractible. Then the Picard group of $X_L$ is trivial and the group of units of $X_L$ is $L^*$. Moreover by Corollary \ref{A2}, the logarithmic Kodaira dimension of $X_L$ is $-\infty$. Therefore, using \cite[Section 4.1]{miyanishi}, we get $X_L \cong \mathbb{A}^2_{L}$. Thus $\mathcal{O}(X)$ is an $\mathbb{A}^2$-form over $L/k$. 
Using \cite[Theorem 3]{kamb} one can show that $\mathcal{O}(X)$ is a trivial $\mathbb{A}^2$-form as $k$ is of characteristic $0$. Indeed, 
since $\mathcal{O}(X)$ is a finitely generated $k$-algebra, we can assume $L$ to be a finitely generated field extension over $k$. In characteristic zero any algebraic extension is separable, therefore after the base change we can assume $L$ to be a finitely generated purely transcendental extension (say, $L = k(X_1, X_2, .., X_n)$) over $k$ by \cite[Theorem 3]{kamb}. Again since $\mathcal{O}(X)$ is a finitely generated $k$-algebra, there is some $f \in k[X_1, X_2, .., X_n]$ such that $\mathcal{O}(X) \otimes_k k[X_1, X_2, .., X_n]_f \cong k[X_1, X_2, .., X_n]_f [X, Y]$. Taking quotient by some maximal ideal, we can assume $L$ is a separable algebraic extension. Therefore by \cite[Theorem 3]{kamb}, we get $\mathcal{O}(X) \cong k[X, Y]$. 
\end{proof}
If the base field is $\mathbb{C}$ and $X(\mathbb{C})$ is topologically contractible, then $X$ has trivial Picard group and trivial group of units, the same proof gives another characterisation of the affine complex plane.
\begin{theorem} \label{main topology}
A smooth complex surface $X$ is isomorphic to $\mathbb{A}^2_{\mathbb{C}}$ if and only if it is topologically contractible and $\mathbb{A}^1$-connected.
\end{theorem}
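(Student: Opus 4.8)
The plan is to deduce the theorem from the Miyanishi--Sugie characterisation quoted above, running essentially the same argument as in the proof of Theorem~\ref{main theorem}, with topological contractibility replacing $\mathbb{A}^1$-contractibility as the source of the ``unit'' and ``Picard'' hypotheses. The ``only if'' direction is immediate: $\mathbb{A}^2_{\mathbb{C}}$ is $\mathbb{A}^1$-contractible, hence $\mathbb{A}^1$-connected, and $\mathbb{A}^2_{\mathbb{C}}(\mathbb{C}) = \mathbb{C}^2$ is topologically contractible. For the converse I would take $X = \Spec \mathcal{O}(X)$ to be smooth affine (as in Theorem~\ref{main theorem}, where the affineness is needed both for the Miyanishi--Sugie criterion and for the Stein property below) and verify the three conditions of that criterion over the algebraically closed characteristic-zero field $\mathbb{C}$: that $\mathcal{O}(X)$ is a unique factorisation domain, that $\mathcal{O}(X)^\ast = \mathbb{C}^\ast$, and that $\bar{\kappa}(X) = -\infty$. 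A pleasant simplification over Theorem~\ref{main theorem} is that no base change and no descent of an $\mathbb{A}^2$-form are required, since $\mathbb{C}$ is already uncountable and algebraically closed.

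First I would extract the two algebraic conditions from topological contractibility, exactly as recorded in the remark preceding the statement. Since $X(\mathbb{C})$ is contractible, its reduced integral cohomology vanishes, so $H^1(X(\mathbb{C}),\mathbb{Z}) = H^2(X(\mathbb{C}),\mathbb{Z}) = 0$. The triviality of the units is the robust part: the map $u \mapsto d\log u$ embeds $\mathcal{O}(X)^\ast/\mathbb{C}^\ast$ into $H^1_{\mathrm{dR}}(X) \cong H^1(X(\mathbb{C}),\mathbb{C}) = 0$, whence $\mathcal{O}(X)^\ast = \mathbb{C}^\ast$. For the Picard group I would invoke the classical theory of topologically contractible smooth affine surfaces (the vanishing is part of the standard package, and is the content of the cited remark): the exponential sequence on the Stein manifold $X(\mathbb{C})$, together with $H^i(X(\mathbb{C}),\mathcal{O}^{\mathrm{an}}) = 0$ for $i>0$ and $H^1 = H^2 = 0$, already forces the analytic Picard group to vanish, and the Kummer sequence identifies $\Pic(X)/n$ and $\Pic(X)[n]$ with subgroups of $H^2(X(\mathbb{C}),\mathbb{Z}/n)$ and $H^1(X(\mathbb{C}),\mathbb{Z}/n)$, both trivial; combined with the surface-theoretic input this yields $\Pic(X) = 0$. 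As $X$ is smooth affine, $\Pic(X) = \mathrm{Cl}(X) = 0$ is equivalent to $\mathcal{O}(X)$ being a UFD, so the first two Miyanishi--Sugie conditions hold.

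Finally, the negativity of the logarithmic Kodaira dimension is supplied directly by the motivic machinery of the paper: $X$ is an $\mathbb{A}^1$-connected smooth surface over $\mathbb{C}$, an uncountable algebraically closed field of characteristic zero, so Corollary~\ref{A2} gives $\bar{\kappa}(X) = -\infty$. With all three hypotheses verified, the Miyanishi--Sugie theorem yields $X \cong \mathbb{A}^2_{\mathbb{C}}$. In this proof the genuinely hard part is already behind us: it is Corollary~\ref{A2}, resting on Theorem~\ref{main}, the heart of the paper. The only remaining delicate classical point is the vanishing of the Picard group, where one must resist the tempting but false shortcut that $\Pic(X)$ injects into $H^2(X(\mathbb{C}),\mathbb{Z})$ or into the analytic Picard group (the elliptic curve minus a point shows both fail for general affine $X$); full use of contractibility, rather than of $H^2$ alone, is what makes the argument go through.
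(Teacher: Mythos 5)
Your proposal is correct and follows essentially the same route as the paper's own (one-sentence) proof: topological contractibility supplies the trivial group of units and trivial Picard group, Corollary~\ref{A2} supplies $\bar{\kappa}(X)=-\infty$, and the Miyanishi--Sugie criterion concludes; the paper simply asserts the two classical vanishing statements, whereas you sketch (correctly, though partly by citation to the ``standard package'') the arguments behind them, and you rightly observe that no Kambayashi-type descent is needed over $\mathbb{C}$. The one caveat --- which your proof shares with the paper's --- is that both implicitly treat $X$ as affine although the statement says only ``smooth complex surface,'' so strictly speaking one should also invoke the classical fact that a topologically contractible smooth complex surface is automatically affine before applying Miyanishi--Sugie and the Stein-theoretic arguments.
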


\begin{corollary} \label{stronger}
Suppose $X$ is a smooth complex surface which is topologically contractible and of non-negative logarithmic Kodaira dimension. Then $X$ is not $\mathbb{A}^1$-connected. 
\end{corollary}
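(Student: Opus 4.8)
The plan is to obtain this as an immediate contrapositive of Corollary \ref{A2}. The base field here is $\mathbb{C}$, which is an uncountable algebraically closed field of characteristic zero, so the hypotheses of Corollary \ref{A2} are met by any smooth complex surface. First I would argue by contradiction: suppose, on the contrary, that $X$ is $\mathbb{A}^1$-connected. Then Corollary \ref{A2} applies verbatim and forces $\bar{\kappa}(X) = -\infty$. This directly contradicts the standing assumption that $\bar{\kappa}(X) \geq 0$. Hence $X$ cannot be $\mathbb{A}^1$-connected, which is exactly the assertion.

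I expect there to be no genuine obstacle at this final step: all the substantive work has already been absorbed into Theorem \ref{main} (existence of affine lines forced by $\mathbb{A}^1$-connectedness) and its consequence Corollary \ref{A2}, together with the input \cite[Proposition 1]{iit} relating $\mathbb{A}^1$-uniruledness to negativity of $\bar{\kappa}$. The only points to check are book-keeping ones, namely that $\mathbb{C}$ indeed satisfies the uncountability, algebraic closedness, and characteristic-zero hypotheses, which is clear.

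Two remarks on the role of the hypotheses that I would fold into the write-up. The topological contractibility is not logically needed for the implication itself, since the negativity of the logarithmic Kodaira dimension of \emph{every} $\mathbb{A}^1$-connected complex surface already does all the work; its purpose is to isolate a nonempty and interesting class of examples, namely topologically contractible surfaces of non-negative $\bar{\kappa}$ such as the Ramanujam surface and the tom Dieck--Petrie surfaces, to which the conclusion then applies. Alternatively, one can route the argument through Theorem \ref{main topology}: if $X$ were both topologically contractible and $\mathbb{A}^1$-connected, that theorem would give $X \cong \mathbb{A}^2_{\mathbb{C}}$, whence $\bar{\kappa}(X) = -\infty$, again contradicting $\bar{\kappa}(X) \geq 0$; this variant uses topological contractibility essentially. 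Finally, since $\mathbb{A}^1$-contractibility implies $\mathbb{A}^1$-connectedness, the conclusion ``not $\mathbb{A}^1$-connected'' is strictly stronger than ``not $\mathbb{A}^1$-contractible,'' which justifies the label of the corollary.
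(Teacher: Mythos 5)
Your proposal is correct and takes essentially the same route as the paper: the corollary is the immediate contrapositive of the fact that an $\mathbb{A}^1$-connected smooth complex surface has $\bar{\kappa}(X)=-\infty$ (Corollary \ref{A2}, with $\mathbb{C}$ uncountable, algebraically closed, of characteristic zero), which is also what drives the paper's neighbouring Theorem \ref{main topology}. Your side remark that topological contractibility is not logically needed for the implication, but only singles out the interesting class of examples (Ramanujam and tom Dieck--Petrie surfaces), is likewise accurate.
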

\begin{remark}
There are topological contractible complex surfaces which are affine modifications of $\mathbb{A}^2_{\mathbb{C}}$, but they are not $\mathbb{A}^1$-contractible. The tom Dieck-Petrie surfaces are the affine modifications of $\mathbb{A}^2_{\mathbb{C}}$ \cite[Example 3.1]{kaza} and they are topologically contractible \cite[Theorem A]{dp}. However the tom Dieck-Petrie surfaces are not even $\mathbb{A}^1$-connected (Corollary \ref{stronger}).
\end{remark}

\begin{corollary} \label{koras-russell}
Any Koras-Russell threefolds of the first kind over a field of characteristic zero cannot be the product of two proper subvarieties.
\end{corollary}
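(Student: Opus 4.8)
The plan is to argue by contradiction, using the two features that distinguish a Koras--Russell threefold $X$ of the first kind: it is $\mathbb{A}^1$-contractible (\cite[Theorem 1.1]{df}, \cite[Theorem 4.2]{hko}) but it is \emph{not} isomorphic to $\mathbb{A}^3$ (\cite[Theorem 9.9]{fr}). So suppose, for contradiction, that $X \cong Y \times_k Z$ with $Y$ and $Z$ proper subvarieties. Then $1 \le \dim Y,\ \dim Z \le 2$ and $\dim Y + \dim Z = 3$, so after interchanging the factors I may assume $\dim Y = 1$ and $\dim Z = 2$. The aim is to show that both factors are forced to be affine spaces, so that $X \cong \mathbb{A}^1_k \times_k \mathbb{A}^2_k = \mathbb{A}^3_k$, contradicting \cite[Theorem 9.9]{fr}.

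First I would verify that each factor inherits the two hypotheses needed to apply the classification results. Since $\Char k = 0$ the field is perfect, and the singular locus of a product is $\Sing(Y \times_k Z) = (\Sing(Y) \times Z) \cup (Y \times \Sing(Z))$; smoothness of $X$ therefore forces $Y$ and $Z$ to be smooth. Next I would produce $\mathbb{A}^1$-contractibility of the factors by a retraction argument in $\mathbf{H}(k)$. Fixing a $k$-point $(y_0, z_0) \in X(k)$ (available, e.g., as the origin on a defining model), the morphism $y \mapsto (y, z_0)$ is a section of the projection $pr_Y : X \to Y$, so $Y$ is a retract of $X$ in $\mathbf{H}(k)$. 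Because $X \simeq \Spec k$, its retract $Y$ is also $\simeq \Spec k$, i.e. $Y$ is $\mathbb{A}^1$-contractible; the symmetric argument with $pr_Z$ and $z \mapsto (y_0, z)$ shows $Z$ is $\mathbb{A}^1$-contractible.

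With smoothness and contractibility of the factors established, I would invoke the classifications. The curve $Y$ is a smooth $\mathbb{A}^1$-contractible $k$-scheme of dimension one, hence $Y \cong \mathbb{A}^1_k$ by \cite[Claim 5.7]{ad}; the surface $Z$ is smooth and $\mathbb{A}^1$-contractible, hence $Z \cong \mathbb{A}^2_k$ by Theorem \ref{main theorem}. Consequently $X \cong \mathbb{A}^1_k \times_k \mathbb{A}^2_k \cong \mathbb{A}^3_k$, which is the desired contradiction.

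I expect the only delicate point to be the second step: one must be certain that a retract of $\Spec k$ in $\mathbf{H}(k)$ is again $\mathbb{A}^1$-contractible (equivalently, that the identity of the retract becomes homotopic to a constant) and that a $k$-rational point is genuinely available to build the section. If the field over which the $\mathbb{A}^1$-contractibility of $X$ is established differs from the given $k$, I would first base change along a suitable extension using \cite[Corollary 1.24]{mv}, which preserves $\mathbb{A}^1$-contractibility, apply the argument there, and transport the conclusion; everything else is an immediate consequence of the two cited classification theorems and the elementary description of the singular locus of a product.
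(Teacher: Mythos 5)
Your proof is correct and follows essentially the same route as the paper: assume $X \cong Y \times_k Z$ with $Y$ a curve and $Z$ a surface, observe that each factor is a smooth ($\mathbb{A}^1$-contractible) retract of $X$, conclude $Y \cong \mathbb{A}^1_k$ by \cite[Claim 5.7]{ad} and $Z \cong \mathbb{A}^2_k$ by Theorem \ref{main theorem}, and contradict \cite[Theorem 9.9]{fr}. The extra care you take with smoothness of the factors, the existence of a $k$-point, and base change is a fine elaboration of steps the paper treats as immediate.
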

\begin{proof}
If possible, $X$ is a Koras-Russell threefold of first kind and $X$ is isomorphic to $Y \times_k Z$ where $Y$ and $Z$ are proper subvarieties of $X$. Then both $Y$ and $Z$ are smooth affine varieties. We can assume that $Y$ is a curve and $Z$ is a surface. Since $X$ is $\mathbb{A}^1$-contractible \cite[Theorem 1.1]{df}, being retract of $X$, both $Y$ and $Z$ are $\mathbb{A}^1$-contractible. Therefore, $Y \cong \mathbb{A}^1_k$ \cite[Claim 5.7]{ad} and $Z\cong \mathbb{A}^2_k$ (Theorem \ref{main theorem}). Thus $X$ is isomorphic to $\mathbb{A}^3_k$ which is a contradiction \cite[Thoerem 9.9]{fr}.
\end{proof}
Corollary \ref{koras-russell} holds in a more general setting:  Any Koras-Russell threefold (of any kind) cannot be the product of two other varieties. This can be proved using the properties of $\mathbb{G}_a$-actions without using $\mathbb{A}^1$-homotopy theory. This was pointed by the referee.

\begin{corollary}[Generalised Zariski's cancellation] \label{zar can}
Let $X$ and $Y$ be varieties over a field $k$ of charateristic zero. Suppose that $X$ is a surface and $X \times_k Y \cong \mathbb{A}^N_k$. Then $X \cong \mathbb{A}^2_k$.
\end{corollary}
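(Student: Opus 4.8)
The plan is to deduce the statement from Theorem \ref{main theorem} by checking that $X$ is a smooth affine surface which is $\mathbb{A}^1$-contractible, in exactly the spirit of the proof of Corollary \ref{koras-russell}.

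First I would pin down the geometry of the two factors. Since $X \times_k Y \cong \mathbb{A}^N_k$ is non-empty and has $k$-rational points, projecting such a point to each factor shows $X(k) \neq \emptyset$ and $Y(k) \neq \emptyset$; fix $y_0 \in Y(k)$. Because $y_0$ gives a closed immersion $\Spec k \to Y$ (a $k$-rational point of a separated finite type $k$-scheme is closed), the base-changed closed immersion realizes $X \cong X \times_k \{y_0\}$ as a closed subscheme of $X \times_k Y \cong \mathbb{A}^N_k$, so $X$ is affine; the symmetric argument gives that $Y$ is affine. For smoothness I would use faithfully flat descent of regularity: the projection $pr_X : X \times_k Y \to X$ is flat (every morphism to the spectrum of a field is flat, and flatness is stable under base change) and surjective since $Y \neq \emptyset$, hence faithfully flat. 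As $X \times_k Y \cong \mathbb{A}^N_k$ is regular, so is $X$; and since $k$ has characteristic zero, hence is perfect, regularity is equivalent to smoothness. Together with the hypothesis $\dim X = 2$, this shows that $X$ is a smooth affine surface over $k$.

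Next I would produce the $\mathbb{A}^1$-contractibility. The inclusion $i : X \to X \times_k Y$, $x \mapsto (x, y_0)$, and the projection $pr_X : X \times_k Y \to X$ satisfy $pr_X \circ i = \mathrm{id}_X$, so $X$ is a retract of $X \times_k Y \cong \mathbb{A}^N_k$ in $\mathbf{H}(k)$. Since $\mathbb{A}^N_k$ is $\mathbb{A}^1$-contractible and a retract of an $\mathbb{A}^1$-contractible space is again $\mathbb{A}^1$-contractible (the identity of $X$ then factors through the terminal object $\Spec k$ of $\mathbf{H}(k)$, forcing $X \cong \Spec k$ in $\mathbf{H}(k)$), the surface $X$ is $\mathbb{A}^1$-contractible. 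Theorem \ref{main theorem} now yields $X \cong \mathbb{A}^2_k$, as desired.

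The argument is essentially formal once Theorem \ref{main theorem} is in hand, so the real content lies in that theorem rather than in this corollary. The one place demanding a little care is the passage from the product being smooth and affine to each factor being smooth and affine; this is where I would invoke the rational point $y_0$ and faithfully flat descent of regularity rather than any coordinate computation, and it is the step I would present most explicitly to avoid a hidden gap.
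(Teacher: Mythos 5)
Your proposal is correct and follows exactly the paper's own route: realize $X$ as a (scheme-theoretic) retract of $X \times_k Y \cong \mathbb{A}^N_k$ via a $k$-point of $Y$, conclude $X$ is a smooth affine $\mathbb{A}^1$-contractible surface, and apply Theorem \ref{main theorem}. The only difference is that you spell out the details (existence of $y_0$, affineness via the closed immersion, smoothness via faithfully flat descent of regularity) that the paper leaves implicit.
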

\begin{proof}
If $X \times_k Y \cong \mathbb{A}^N_k$, then both $X$ and $Y$ are smooth affine $k$-varieties. Being retract of $\mathbb{A}^N_k$, $X$ is $\mathbb{A}^1$-contractible. Thus $X \cong \mathbb{A}^2_{k}$ by Theorem \ref{main theorem}.
\end{proof}
Theorem \ref{main theorem} has following immediate consequence in dimensions $3$ and $4$:
\begin{corollary} \label{4,5}
 An $\mathbb{A}^1$-contractible smooth affine threefold $X$ over a field of characteristic zero is isomorphic to $\mathbb{A}^3_{k}$ if and only if it is isomorphic to a product of two proper sub-varieties of lower dimension. Similarly an $\mathbb{A}^1$-contractible smooth affine fourfold $X$ over a field of characteristic zero is isomorphic to $\mathbb{A}^4_{k}$ if and only if it is isomorphic to a product of two proper sub-varieties each of dimension two.
\end{corollary}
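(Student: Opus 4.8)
The plan is to reduce both assertions to the classifications of $\mathbb{A}^1$-contractible varieties in dimensions $1$ and $2$ already available to us: the only $\mathbb{A}^1$-contractible smooth curve is $\mathbb{A}^1_k$ \cite[Claim 5.7]{ad}, and the only $\mathbb{A}^1$-contractible smooth affine surface is $\mathbb{A}^2_k$ (Theorem \ref{main theorem}). One direction of each equivalence is immediate: $\mathbb{A}^3_k \cong \mathbb{A}^1_k \times_k \mathbb{A}^2_k$ exhibits the required product decomposition in the threefold case, and $\mathbb{A}^4_k \cong \mathbb{A}^2_k \times_k \mathbb{A}^2_k$ does so in the fourfold case.

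For the converse in the threefold case, suppose $X \cong Y \times_k Z$ with $Y, Z$ proper subvarieties of strictly lower dimension, so that necessarily $\dim Y = 1$ and $\dim Z = 2$. Since $X$ is smooth affine, each factor is smooth affine as well. The key observation is the same retract argument used in Corollary \ref{koras-russell}: the projection $X \to Y$ together with an inclusion $Y \to X$ determined by a $k$-point of $Z$ realises $Y$ as a retract of $X$, and symmetrically for $Z$. As $X$ is $\mathbb{A}^1$-contractible, i.e.\ isomorphic to $\mathrm{Spec}\,k$ in $\mathbf{H}(k)$, any retract of $X$ is a retract of $\mathrm{Spec}\,k$ and hence itself $\mathbb{A}^1$-contractible. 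Applying \cite[Claim 5.7]{ad} to $Y$ and Theorem \ref{main theorem} to $Z$ gives $Y \cong \mathbb{A}^1_k$ and $Z \cong \mathbb{A}^2_k$, whence $X \cong \mathbb{A}^3_k$.

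The fourfold case proceeds identically once the two factors are each assumed to have dimension $2$. Both factors are then smooth affine surfaces that occur as retracts of the $\mathbb{A}^1$-contractible scheme $X$, hence are themselves $\mathbb{A}^1$-contractible, so each is isomorphic to $\mathbb{A}^2_k$ by Theorem \ref{main theorem}; this yields $X \cong \mathbb{A}^2_k \times_k \mathbb{A}^2_k \cong \mathbb{A}^4_k$.

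The only delicate point—and the precise reason the fourfold statement is phrased for a $2+2$ splitting rather than an arbitrary factorisation into proper subvarieties—is that a hypothetical $1+3$ decomposition would require identifying an $\mathbb{A}^1$-contractible smooth affine threefold with $\mathbb{A}^3_k$, which is \emph{false} in general: the Koras--Russell threefolds of the first kind are $\mathbb{A}^1$-contractible yet not isomorphic to $\mathbb{A}^3_{\mathbb{C}}$. Restricting to the prescribed dimension profiles guarantees that every factor has dimension at most $2$, so that the established one- and two-dimensional classifications apply verbatim and no appeal to an unknown three-dimensional cancellation statement is needed.
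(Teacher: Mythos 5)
Your proof is correct and follows essentially the same route the paper intends: the paper treats Corollary \ref{4,5} as an immediate consequence of Theorem \ref{main theorem}, via exactly the retract argument it uses in Corollaries \ref{koras-russell} and \ref{zar can} (each factor is a retract of the $\mathbb{A}^1$-contractible $X$, hence $\mathbb{A}^1$-contractible, so the curve factor is $\mathbb{A}^1_k$ by \cite[Claim 5.7]{ad} and each surface factor is $\mathbb{A}^2_k$ by Theorem \ref{main theorem}). Your closing remark explaining why the fourfold statement requires the $2+2$ dimension profile (to avoid the open three-dimensional case, cf.\ Koras--Russell) is a sound observation consistent with the paper's phrasing.
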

The above corollaries (\ref{zar can}, \ref{4,5}) can be stated and proved without an appeal to $\mathbb{A}^1$-homotopy theory. The main ingredient here is the negativity of the logarithmic Kodaira dimension of the surfaces appearing in the proof , which we have derived from Theorem \ref{main}. \\\\

\textbf{Locally Nilpotent Derivation}

Let $k$ be a field of characteristic zero and $R$ is a $k$-algebra. The following definition is related to the property of being $\mathbb{A}^1$-ruled. 
\begin{definition} \cite[Section 1.1.7]{fr} 
A locally nilpotent $k$-derivation $D : R \to R$ is a $k$-linear derivation such that for each $a \in R$ $\exists \ n \in \mathbb{N}$ such that $D^{n}(a)=0$. The derivation $D$ has a slice if $\exists \ s \in R$  with $D(s)=1$. We denote the kernel of $D$ by $R^{D}$ which is a $k$-algebra and the set of all locally nilpotent $k$-derivations on $R$ will be denoted by $LND_k(R)$.
\end{definition}
\par

Locally nilpotent derivation is an essential tool in affine algebraic geometry to characterise the polynomial rings. Miyanishi showed a two dimensional affine U.F.D. over an algebraically closed field $k$ with no non-trivial units is isomorphic to $k[x,y]$ if it admits a non-trivial locally nilpotent $k$-derivation. \cite[Theorem 1]{mi}. We want to also emphasise that locally nilpotent derivation corresponds to $\mathbb{G}_a$ -action only when $char(k) = 0$. 

\begin{remark}  \label{slice}
\begin{enumerate}
\item Suppose $D \in LND_k(R)$ has a slice $s \in R$. Then $R= R^{D}[s]$ i.e. $R$ is a polynomial ring over $R^{D}$ of one variable and $D=\frac{d}{d s}$, derivative with respect to $s$ \cite[Corollary 1.26]{fr}.
\item The locally nilpotent $k$-derivations on an affine $k$-domain $B$ correspond to the algebraic $\mathbb{G}_a$-actions on $Spec \ B$ \cite[Section 1.5]{fr}.
\item Let $X$ be an affine variety such that $\mathcal{O}(X)$ is a U.F.D. Then $X$ is $\mathbb{A}^1$-ruled if and only if there is a non-trivial locally nilpotent derivation on $\mathcal{O}(X)$ \cite[Proposition 2]{at} .

\end{enumerate}

\end{remark}
The fact that $\mathbb{A}^2_{k}$ is the only $\mathbb{A}^1$-contractible smooth affine surface over a field $k$ of characteristic zero has the following consequences.
\begin{corollary} 
A smooth affine threefold $X$ over a field of characteristic zero is isomorphic to $\mathbb{A}^3_{k}$ if and only if $X$ is $\mathbb{A}^1$-contractible and there exists a locally nilpotent derivation with a slice.

\end{corollary}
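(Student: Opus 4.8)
The plan is to prove the stated corollary, which characterizes $\mathbb{A}^3_k$ among smooth affine threefolds over a characteristic zero field as precisely those that are $\mathbb{A}^1$-contractible and admit a locally nilpotent derivation with a slice. One direction is immediate: if $X \cong \mathbb{A}^3_k = \Spec \ k[x,y,z]$, then $X$ is $\mathbb{A}^1$-contractible (as noted in the introduction, affine spaces are $\mathbb{A}^1$-contractible), and the derivation $\frac{d}{dx}$ is a locally nilpotent $k$-derivation with slice $x$ (so $D(x)=1$). Thus the content lies in the converse.

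For the converse, suppose $X = \Spec \ R$ is a smooth affine threefold that is $\mathbb{A}^1$-contractible and admits $D \in LND_k(R)$ with a slice $s \in R$, so $D(s)=1$. The first step is to invoke Remark \ref{slice}(1): the existence of a slice forces $R = R^D[s]$, a polynomial ring in one variable over the kernel subring $R^D$. Writing $Z = \Spec \ R^D$, this yields an isomorphism $X \cong Z \times_k \mathbb{A}^1_k$. In particular $X$ splits as a product, and $Z$ is a smooth affine surface (smoothness of $Z$ follows since $X \cong Z \times_k \mathbb{A}^1_k$ is smooth, and the factor $\mathbb{A}^1_k$ is smooth, so $Z$ is smooth).

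The second step is to transfer $\mathbb{A}^1$-contractibility from $X$ to the factor $Z$. Since $X \cong Z \times_k \mathbb{A}^1_k$ and $Z$ is a retract of $X$ (via the inclusion of any fiber $Z \times \{pt\}$ composed with the projection), and $\mathbb{A}^1$-contractibility is preserved under retracts in $\mathbf{H}(k)$, the surface $Z$ is itself $\mathbb{A}^1$-contractible. Concretely, $Z$ being a retract of the $\mathbb{A}^1$-contractible object $X$ means the structure map $Z \to \Spec \ k$ is a retract of $X \to \Spec \ k$ in $\mathbf{H}(k)$, hence an isomorphism. Now I would apply Theorem \ref{main theorem}, the main result of the paper, which says that any $\mathbb{A}^1$-contractible smooth affine surface over a field of characteristic zero is isomorphic to $\mathbb{A}^2_k$. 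This gives $Z \cong \mathbb{A}^2_k$, and therefore $X \cong Z \times_k \mathbb{A}^1_k \cong \mathbb{A}^2_k \times_k \mathbb{A}^1_k \cong \mathbb{A}^3_k$, completing the proof.

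The main conceptual obstacle is already resolved by the earlier machinery: the hard part is precisely Theorem \ref{main theorem} (itself resting on the negativity of the logarithmic Kodaira dimension established in Theorem \ref{main} via Corollary \ref{A2}), which the corollary is free to cite. Within this proof the only point demanding care is the passage from the slice to the product decomposition and the verification that the surface factor inherits both smoothness and $\mathbb{A}^1$-contractibility; both are routine once Remark \ref{slice}(1) is in hand, but one should note explicitly that a slice (rather than merely a nontrivial locally nilpotent derivation) is what guarantees a clean polynomial splitting $R = R^D[s]$ rather than only $\mathbb{A}^1$-ruledness. This mirrors the structure of Corollary \ref{4,5}, which characterizes $\mathbb{A}^3_k$ among $\mathbb{A}^1$-contractible threefolds as those admitting a product decomposition; here the existence of a slice supplies that decomposition automatically.
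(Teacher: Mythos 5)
Your proposal is correct and follows essentially the same route as the paper's own proof: use the slice to get the splitting $X \cong Z \times_k \mathbb{A}^1_k$ via Remark \ref{slice}(1), observe that the surface factor is $\mathbb{A}^1$-contractible as a retract of $X$, and conclude $Z \cong \mathbb{A}^2_k$ by Theorem \ref{main theorem}. The only difference is that you spell out the routine verifications (smoothness of the factor, the retract argument) that the paper leaves implicit.
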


\begin{proof}
 One direction is straightforward. For the other direction, suppose $X$ is $\mathbb{A}^1$-contractible and there is a locally nilpotent derivation on $\mathcal{O}(X)$ with a slice. Then by Remark \ref{slice}, $X \cong U \times_k \mathbb{A}^1_k$, where $U$ is a smooth affine $k$-surface. The surface $U$ is $\mathbb{A}^1$-contractible, being a retract of $X$. Therefore by Theorem \ref{main theorem}, $U \cong \mathbb{A}^2_{k}$ and hence $X \cong \mathbb{A}^3_{k}$.

\end{proof}

In this context, there is an algebraic characterisation of the polynomial ring $k[x, y, z]$ where $k$ is an algebraically closed field of characteristic zero. A three dimensional finitely generated $k$-algebra, which is also a U.F.D., is isomorphic to $k[x, y, z]$ if and only if its Makar-Limanov invariant is trivial and it has a locally nilpotent derivation with a slice \cite[Theorem 4.6]{nn}.

\section{Regular functions on $\mathcal{S}(X)$}
So far we have used $\mathbb{A}^1$-homotopy theory to classify the affine varieties, specially the affine spaces. There are algebraic invariants associated to locally nilpotent derivation which also allow us to classify the affine spaces. One such invariant is the Makar-Limanov invariant. Makar-Limanov invariant of an affine variety $X$ is a subring $ML(X)$ of $\mathcal{O}(X)$. The ring $ML(X)$ is the set of regular functions on $X$, constant along the orbits of all $\mathbb{G}_a$-actions on $X$ \cite[Section 2.5]{fr},  affine variety $X$ over a characteristic $0$ field $k$. In particular for a $k$-domain $R$, 
$$ML(R):=\underset{D \in LND_{k}(R)}{\cap}R^D.$$ 
In case of $R = \mathcal{O}(X)$, we write $ML(X)$ instead of $ML(R)$. This invariant is not functorial. A two dimensional affine U.F.D. over an algebraically closed field $k$ of characteristic zero is isomorphic to $k[x,y]$ if and only if its Makar-Limanov invariant is trivial \cite[Theorem 9.12]{fr}. In this section, we construct a new functorial invariant $\mathcal{O}_{ch}(X)$ (Definition \ref{definition O_{ch}}) which is a subobject of $ML(X)$. We show that $\mathcal{O}_{ch}(X)$ is trivial i.e. $\mathcal{O}_{ch}(X) = k$ implies the existence of $\mathbb{A}^1$-s in $X$ (Theorem \ref{ochtrivial}).

Throughout the section we assume $k$ to be an algebraically closed field. Recall as in Section 4 by the phrase ``a line $g: \mathbb{A}^1 \to X$", we mean non-constant morphism $g: \mathbb{A}^1_k \to X$.
\begin{definition} \label{definition O_{ch}}
 Let $X$ be an affine $k$-variety and $\mathcal{O}(X)$ be the ring of regular functions on $X$. For a fixed $g: \mathbb{A}^{1}_{k} \to X$, define
\[
  \mathcal{O}_{ch,g}(X): =\left\lbrace  f \in \mathcal{O}(X) \;\middle|\;
   f \circ g \text{ is constant}
  \right\rbrace
\]
We define $\mathcal{O}_{ch}(X) = \underset {g \in Hom_{Sch/k}(\mathbb{A}^1_k, X)}{\bigcap} \mathcal{O}_{ch,g}(X)$, where $Sch/k$ is the category of finite type $k$-schemes.
 
\end{definition}

 We get the following immediate properties of $\mathcal{O}_{ch,g}(X)$.
\begin{lemma} \label{constant}
Suppose $X$ is an affine $k$-variety and $g: \mathbb{A}^{1}_{k} \to X$ a $k$-morphism.
\begin{enumerate}
\item A morphism $\phi:\mathbb{A}^{1}_{k} \to \mathbb{A}^{1}_{k}$ is constant if and only if the induced $k$-algebra homomorphism $\Tilde{\phi}: k[T] \to k[T]$ takes $T$ to an element of $k$.
\item $\mathcal{O}_{ch,g}(X)$ is a $k$-subalgebra of $\mathcal{O}(X)$. In particular, $\mathcal{O}_{ch}(X)$ is a $k$-subalgebra of $\mathcal{O}(X)$.
\item Suppose $f_1, f_2 \in \mathcal{O}(X)$. If the product $f_{1}f_{2} \in \mathcal{O}_{ch,g}(X)$ is non-zero, then $f_{1} \in \mathcal{O}_{ch,g}(X)$ and $f_{2} \in \mathcal{O}_{ch,g}(X)$.
\item The group of units of $X$, $\mathcal{O}(X)^* \subset \mathcal{O}_{ch,g}(X)$. Thus $\mathcal{O}(X)^* \subset \mathcal{O}_{ch}(X)$. If $\mathcal{O}_{ch}(X)$ is trivial, then $X$ has trivial group of units.
\item Suppose, $X$ is of dimension at least two. Then the morphism $\bar{i}: X \to Spec(\mathcal{O}_{ch,g}(X))$ induced by the inclusion $i: \mathcal{O}_{ch,g}(X) \to \mathcal{O}(X)$ is birational.
\end{enumerate}
\end{lemma}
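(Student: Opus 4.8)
The plan is to route every part of the statement through the pullback $k$-algebra homomorphism $g^{*} : \mathcal{O}(X) \to \mathcal{O}(\mathbb{A}^1_k) = k[T]$ induced by $g$, together with the identification $\mathcal{O}_{ch,g}(X) = (g^{*})^{-1}(k)$. For (1), I would note that a $k$-morphism $\phi : \mathbb{A}^1_k \to \mathbb{A}^1_k$ is exactly the datum of $p(T) := \tilde{\phi}(T) \in k[T]$, giving the map ``$t \mapsto p(t)$''; this is constant precisely when $p$ is a constant polynomial, i.e. when $\tilde{\phi}(T) \in k$. Applying (1) to the composite $f \circ g : \mathbb{A}^1_k \to \mathbb{A}^1_k$, whose associated element of $k[T]$ is $g^{*}(f)$, shows that $f \circ g$ is constant if and only if $g^{*}(f) \in k$, which is exactly the asserted identification $\mathcal{O}_{ch,g}(X) = (g^{*})^{-1}(k)$.

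Parts (2)--(4) then follow formally from elementary ring theory in the domain $k[T]$. For (2), since $k \subset k[T]$ is a $k$-subalgebra and $g^{*}$ is a homomorphism of $k$-algebras, the preimage $(g^{*})^{-1}(k)$ is a $k$-subalgebra of $\mathcal{O}(X)$, and the intersection $\mathcal{O}_{ch}(X)$ of these over all $g$ is again one. For (3), if $f_1 f_2 \in \mathcal{O}_{ch,g}(X)$ is nonzero then $g^{*}(f_1)\,g^{*}(f_2) = g^{*}(f_1 f_2)$ is a nonzero constant of $k[T]$; comparing degrees in the domain $k[T]$ forces each factor to have degree $0$, hence to lie in $k$, so $f_1, f_2 \in \mathcal{O}_{ch,g}(X)$. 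For (4), any unit $u \in \mathcal{O}(X)^{*}$ satisfies $g^{*}(u) \in k[T]^{*} = k^{*}$, so $u \in \mathcal{O}_{ch,g}(X)$ for every $g$ and therefore $\mathcal{O}(X)^{*} \subset \mathcal{O}_{ch}(X)$; if $\mathcal{O}_{ch}(X) = k$, then $\mathcal{O}(X)^{*}$ is a set of nonzero elements of $k$, which (together with the automatic inclusion $k^{*} \subset \mathcal{O}(X)^{*}$) forces $\mathcal{O}(X)^{*} = k^{*}$.

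The geometric content lies in (5), and here is how I would argue it. Write $V = \overline{Im(g)} \subset X$ for the closure of the image of $g$; being the image of $\mathbb{A}^1_k$, it is irreducible of dimension at most $1$. Since $\dim X \geq 2$ and $X$ is a variety (so $\mathcal{O}(X)$ is a domain), $V$ is a \emph{proper} closed subset, and thus the ideal $\ker(g^{*}) = I(V)$ of regular functions vanishing on $V$ is nonzero. Fixing any $0 \neq h \in \ker(g^{*})$, for an arbitrary $f \in \mathcal{O}(X)$ one has $g^{*}(h) = 0 \in k$ and $g^{*}(hf) = g^{*}(h)\,g^{*}(f) = 0 \in k$, so both $h$ and $hf$ lie in $\mathcal{O}_{ch,g}(X)$; hence $f = (hf)/h \in Frac(\mathcal{O}_{ch,g}(X))$. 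This shows $Frac(\mathcal{O}_{ch,g}(X)) = Frac(\mathcal{O}(X))$, and since $\bar{i}$ is dominant (the inclusion $i$ being injective), $\bar{i}$ is birational.

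I expect the \textbf{main obstacle} to be (5): everything there rests on producing a nonzero $h$ with $g^{*}(h) = 0$, which is precisely where the hypothesis $\dim X \geq 2$ is indispensable, since it guarantees that the curve (or point) $V = \overline{Im(g)}$ is a proper closed subset so that $I(V) \neq 0$. For $\dim X = 1$ with $g$ dominant this fails ($V = X$, $I(V) = 0$) and $\mathcal{O}_{ch,g}(X)$ collapses to $k$, so the dimension assumption is essential rather than cosmetic. Once such an $h$ is in hand, the identity $f = (hf)/h$ --- with numerator and denominator both in $\mathcal{O}_{ch,g}(X)$ --- finishes the argument with no further computation, and parts (1)--(4) are pure bookkeeping around the homomorphism $g^{*}$.
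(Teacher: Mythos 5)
Parts (1), (2) and (4) of your proposal are correct and agree in substance with the paper: for (4) the paper views a unit as a morphism $X \to \mathbb{G}_m$ and invokes $\mathbb{A}^1$-rigidity of $\mathbb{G}_m$, which is exactly your observation $g^*(u) \in k[T]^* = k^*$ in geometric clothing. Your part (5) is also essentially the paper's argument, with two genuine simplifications. The paper first proves that $Im(g)$ is \emph{closed} in $X$ (by extending $g$ to $\bar{g}:\mathbb{P}^1_k \to \overline{X}$ and a properness argument), then picks $f$ in the ideal of the image with $D(f)$ non-empty, and shows the localisation $\mathcal{O}_{ch,g}(X)_f \to \mathcal{O}(X)_f$ is an isomorphism, the key point being that $fh+\mu \in \mathcal{O}_{ch,g}(X)$ for all $h \in \mathcal{O}(X)$, $\mu \in k$. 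You sidestep the closedness discussion entirely by working with $V = \overline{Im(g)}$ and $\ker(g^*)$, and you conclude via equality of fraction fields; the mechanism ($h$ and $hf$ both lie in $\mathcal{O}_{ch,g}(X)$ for $h \in \ker(g^*)$) is identical. Since $\mathcal{O}_{ch,g}(X)$ need not be Noetherian or of finite type, I would spend one more line to upgrade your conclusion to the paper's: for any $a \in \mathcal{O}(X)$ one has $a/h^n = (ha)/h^{n+1}$ in $\mathcal{O}(X)_h$ with $ha \in \mathcal{O}_{ch,g}(X)$, so $\mathcal{O}_{ch,g}(X)_h = \mathcal{O}(X)_h$; this exhibits $\bar{i}$ as an isomorphism over the dense open $D(h)$, which is the unambiguous form of birationality.

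The genuine gap is in (3), not in (5) where you anticipated the main obstacle. You assert that if $f_1f_2 \in \mathcal{O}_{ch,g}(X)$ is non-zero then $g^*(f_1)g^*(f_2) = g^*(f_1f_2)$ is a \emph{non-zero} constant. That inference fails: $g^*$ annihilates every function vanishing on $Im(g)$, and such functions are non-zero elements of $\mathcal{O}(X)$ whenever $Im(g)$ is a proper closed subset. In fact, under the literal reading of ``non-zero'' (non-zero as an element of $\mathcal{O}(X)$), the statement itself is false: take $X = \mathbb{A}^2_k$, $g(t) = (t,0)$, $f_1 = y$, $f_2 = x$. Then $f_1f_2 = xy \neq 0$ and $(xy)\circ g \equiv 0$ is constant, so $xy \in \mathcal{O}_{ch,g}(X)$, yet $x \circ g = t$ is non-constant, so $f_2 \notin \mathcal{O}_{ch,g}(X)$. (In particular $\mathcal{O}_{ch,g}(X)$ is \emph{not} factorially closed in $\mathcal{O}(X)$; the paper's subsequent remark only claims it is ``similar to'' factorially closed.) Your degree argument in the domain $k[T]$ is valid exactly under the stronger hypothesis that the constant value $g^*(f_1f_2)$ is non-zero, i.e.\ that $f_1f_2$ does not vanish along $Im(g)$, which is evidently the intended reading of the lemma. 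The paper gives no proof of (1)--(3) (they are ``left to the reader''), so there is nothing to compare against there; but your write-up silently substitutes this corrected hypothesis for the stated one, and the step ``is a nonzero constant of $k[T]$'' is precisely the point where any proof of the literal statement must break down.
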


\begin{proof}
\textbf{(1),(2) and (3): }The proofs are quite straightforward. So we leave it to the reader. \par
\textbf{(4)}: Suppose, $f \in \mathcal{O}(X)^*$, then $f$ is a morphism from $X$ to $\mathbb{G}_m$. Therefore $f \circ g$ is constant, since $\mathbb{G}_m$ is $\mathbb{A}^1$-rigid (Remark \ref{Open-Closed}). Hence $f \in \mathcal{O}_{ch,g}(X)$. \par   
\textbf{(5):} Since $X$ is of dimension at least $2$, so $g:\mathbb{A}^{1}_{k} \to X$ is not dominant. The image of $g$ is closed in $X$. Indeed, if $g$ is non-constant, we extend $g$ to a morphism $\bar{g}: \mathbb{P}^1_k \to \overline{X}$ ($\overline{X}$ is a compactification of $X$). Since $\mathbb{P}^1_k$ is a projective variety and $X$ is an affine variety, there are no non-constant morphisms from $\mathbb{P}^1_k$ to $X$. Thus $\bar{g}$ maps the point at infinity of $\mathbb{P}^1_k$ to a point in $\overline{X} \setminus X$. The morphism $\bar{g}$ is the composition of two proper morphisms $\mathbb{P}^1_k \xrightarrow{\text{graph of }\bar{g}} \mathbb{P}^1_k \times_k \overline{X} \xrightarrow{\text{projection}} \overline{X}$, 
so $\bar{g}$ is a proper morphism. 
Thus the image of $\bar{g}$ is closed and $Im(g) = Im(\bar{g}) \cap X$ is closed in $X$. Therefore,  image of $g$ is given by some ideal $I$ of $\mathcal{O}(X)$. So its complement is the union of basic open set $D(f)$-s, $f \in I$. Choose $f \in I$ with $D(f)$ is non-empty. Then $\Tilde{g}(f)=0$ (where $\Tilde{g}:\mathcal{O}(X) \to k[T]$ is induced by $g$.). So $fh + \mu \in \mathcal{O}_{ch,g}(X)$ $\forall \ h \in \mathcal{O}(X), \forall \mu \ \in k$ by Part (1). We have an injective homomorphism $i_*: \mathcal{O}_{ch,g}(X)_{f} \to \mathcal{O}(X)_{f}$ induced by the inclusion $i$. The map $i_*$ is also surjective. Indeed for $\frac{h}{f^{k}} \in \mathcal{O}(X)_{f}$, $fh \in \mathcal{O}_{ch,g}(X)$ and the element $\frac{fh}{f^{k+1}}$ is mapped to $\frac{h}{f^{k}}$. Hence $\mathcal{O}_{ch,g}(X)_{f}$ and $\mathcal{O}(X)_{f}$ are isomorphic and therefore $X$ and $Spec(\mathcal{O}_{ch,g}(X))$ are birational.
\end{proof}
\begin{remark}
In the above Lemma \ref{constant}(5), the assumption about the dimension of $X$  is necessary. If $X=\mathbb{A}^{1}_{k}$ and $g$  be the identity map on $\mathbb{A}^{1}_{k}$, then $\mathcal{O}_{ch,g}(X)$ is trivial. Property $(3)$ of $\mathcal{O}_{ch,g}(X)$ in the above Lemma \ref{constant} is similar to a ring being factorially closed which is satisfied by kernels of a locally nilpotent derivation hence by the Makar-Limanov invariant \cite[Section 1.4, Principle 1]{fr}. Also observe that, $\mathcal{O}_{ch,g}(X)$ may not always be finitely generated $k$-subalgebra of $\mathcal{O}(X)$. For instance, suppose $X= \mathbb{A}^2_k$ and $g$ is the $y$-axis. Then $\mathcal{O}_{ch,g}(X) = k + xk[x,y]$. This subring of $k[x,y]$ is not Noetherian. For this, consider the chain of ideals $\{I_n\}_n$ in $\mathcal{O}_{ch,g}(X)$: $I_n$ is the ideal generated by $\{x, xy,xy^2,.., xy^{n-1}\}$. This chain of ideals does not stabilize. 
\end{remark}
\begin{remark}
We can describe $\mathcal{O}_{ch,g}(X)$ explicitly. The image of the affine line $g: \mathbb{A}^1_k \to X$ is closed in $X$. Let  $\Tilde{g}:\mathcal{O}(X) \to k[T]$ be the $k$-algebra homomorphism induced by $g$. A regular function on $X$ is in $\mathcal{O}_{ch,g}(X)$ if and only if its image is in $k$ under $\Tilde{g}$. Thus for $\phi \in\mathcal{O}_{ch, g}(X) $, $\phi - \Tilde{g}(\phi) \in Ker(\Tilde{g})$. Therefore, $\mathcal{O}_{ch, g}(X) = k + Ker(\Tilde{g})$. If $\phi = \lambda + \theta$ for some constant $\lambda$ and $\theta \in Ker(\Tilde{g})$, then $\phi$ takes value $\lambda$ along $Im(g)$. \\
\textbf{Suppose} $ \mathbf{g_1}$ \textbf{ and } $\mathbf{g_2}$ \textbf{ are  two intersecting } $\mathbf{\mathbb{A}^1}$ \textbf{-s in } $\mathbf{X}$ \textbf{: } \par
 If $\phi \in \mathcal{O}_{ch, g_1}(X) \cap \mathcal{O}_{ch, g_2}(X) $, then $\phi = \lambda + \theta = \lambda^{\prime} + \theta^{\prime}$ for some constants $\lambda, \lambda^{\prime}$ and $\theta, \theta^{\prime}$ are in kernel of $\Tilde{g_1}$ and $\Tilde{g_2}$ respectively. Since $g_1$ and $g_2$ intersect, $\lambda = \lambda^{\prime}$. Therefore $ \mathcal{O}_{ch, g_1}(X) \cap \mathcal{O}_{ch, g_2}(X) = k + (Ker(\Tilde{g_1}) \cap Ker(\Tilde{g_2}))$, if $g_1$ and $g_2$ intersect. \\ 
\textbf{Suppose } $\mathbf{g_1}$ \textbf{ and } $\mathbf{g_2}$ \textbf{ are parallel : } \par
If the images of $g_1$ and $g_2$ are disjoint, then $\mathcal{O}_{ch, g_1}(X) \cap \mathcal{O}_{ch, g_2}(X)$ properly contains $k + (Ker(\Tilde{g_1}) \cap Ker(\Tilde{g_2}))$ from the following lemma (Lemma \ref{geometric properties}). 
\end{remark}
\begin{definition}
A line $g:\mathbb{A}^1 \to X$ is called isolated if it does not intersect any other lines in $X$ i.e. for any line $h: \mathbb{A}^1 \to X$, if $Im(h) \neq Im(g)$ then $Im(h) \cap Im(g) = \emptyset$.
\end{definition}
\begin{lemma} \label{geometric properties}
Suppose $X$ is an affine $k$-variety.
\begin{enumerate}
\item Suppose $g_1, g_2,..,g_n$ are pairwise parallel lines in $X$ (i.e. $Im(g_i) \cap Im(g_j) = \emptyset, \ \forall i \neq j$) and $c_1, c_2,.., c_n$ are $n$ many constants. Then there is $f \in \mathcal{O}(X)$ such that $f = c_i$ along $Im(g_i)$.
\item Let $X$ be a smooth affine surface such that $\mathcal{O}(X)$ is a U.F.D. Suppose there is a line $g: \mathbb{A}^1 \to X$ which is isolated. Then $\mathcal{O}_{ch}(X)$ is non-trivial.
\end{enumerate}
\end{lemma}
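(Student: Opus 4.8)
The plan is to treat the two parts separately; each reduces to a standard commutative-algebra fact once the geometry of lines in an affine variety is in place.

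For part (1), I would first record that each image $\mathrm{Im}(g_i)$ is a closed irreducible subset of $X$ --- this is exactly the closedness established in the proof of Lemma \ref{constant}(5), together with the fact that the image of the irreducible $\mathbb{A}^1_k$ is irreducible. Let $I_i = I(\mathrm{Im}(g_i)) \subset \mathcal{O}(X)$ be the corresponding radical ideal. Since the images are pairwise disjoint and $k = \bar{k}$, the Nullstellensatz gives $V(I_i + I_j) = \mathrm{Im}(g_i) \cap \mathrm{Im}(g_j) = \emptyset$, so $I_i + I_j = \mathcal{O}(X)$ for $i \neq j$; that is, the ideals are pairwise comaximal. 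The Chinese Remainder Theorem then makes the map $\mathcal{O}(X) \to \prod_{i=1}^n \mathcal{O}(X)/I_i$ surjective. As each $c_i$ lies in $k \subset \mathcal{O}(X)/I_i$, I can lift $(c_1, \dots, c_n)$ to some $f \in \mathcal{O}(X)$, and then $f - c_i \in I_i$ means precisely that $f = c_i$ along $\mathrm{Im}(g_i)$. This finishes (1).

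For part (2), set $C = \mathrm{Im}(g)$. Again by the argument in Lemma \ref{constant}(5), $C$ is a closed irreducible curve in the surface $X$, so the prime ideal $\mathfrak{p} = I(C)$ has height one. Because $\mathcal{O}(X)$ is a UFD, every height-one prime is principal, so $\mathfrak{p} = (h)$ with $h$ irreducible; in particular $C = V(h)$ as a set and $h \notin k$. The heart of the argument is to show $h \in \mathcal{O}_{ch}(X)$, i.e.\ that $h \circ g'$ is constant for every line $g' : \mathbb{A}^1_k \to X$. I would split into two cases dictated by the isolatedness hypothesis: if $\mathrm{Im}(g') = C$ then $h \circ g' = 0$ is constant; if $\mathrm{Im}(g') \neq C$ then isolatedness forces $\mathrm{Im}(g') \cap C = \emptyset$, so the polynomial $h \circ g' \in k[T]$ has no zero, and since $k$ is algebraically closed a zero-free polynomial is a nonzero constant. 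Thus $h$ is constant along every line, whence $h \in \mathcal{O}_{ch}(X)$; as $h \notin k$, the subalgebra $\mathcal{O}_{ch}(X)$ is non-trivial.

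The only delicate inputs are geometric rather than algebraic: that the image of a non-constant line in the surface $X$ is a closed curve (so that $V(h)$ is genuinely $C$ and the disjointness conditions are conditions on closed sets), and that its defining prime is principal. Both are already available --- the first from Lemma \ref{constant}(5) and the second from the UFD hypothesis --- so I expect the decisive point to be the clean dichotomy in (2): $h \circ g'$ either vanishes identically or never vanishes. This is precisely where the isolatedness of $g$ and the algebraic closedness of $k$ enter, and everything surrounding it is the Chinese Remainder Theorem and the identification of the vanishing locus of $h$ with $C$.
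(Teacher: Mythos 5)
Your proposal is correct and follows essentially the same route as the paper: for (1), comaximality of the ideals cutting out the (closed, pairwise disjoint) images followed by the Chinese Remainder Theorem, and for (2), using the UFD hypothesis to produce an irreducible $h$ with $V(h) = \mathrm{Im}(g)$ and showing $h$ is constant along every line. The only cosmetic difference is in the final step of (2): where the paper deduces constancy of the nowhere-vanishing function along another line from the $\mathbb{A}^1$-rigidity of $\mathbb{G}_m$, you invoke directly that a zero-free polynomial over an algebraically closed field is constant --- the same underlying fact.
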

\begin{proof}
\begin{enumerate}
\item Since $g_i$ and $g_j$ are parallel, $Ker(\Tilde{g_i}) + Ker(\Tilde{g_j}) = \mathcal{O}(X)$. Indeed, if $Ker(\Tilde{g_i}) + Ker(\Tilde{g_j})$ is contained in some maximal ideal of $\mathcal{O}(X)$, then there is a common point of $g_1$ and $g_2$. So the ideals $Ker(\Tilde{g_i})$ and $Ker(\Tilde{g_j})$ are pairwise comaximal. Thus by Chinese remainder theorem, there exists $f \in \mathcal{O}(X)$ such that $f$ is $c_i$ along $Im(g_i)$. 
\item Since $\mathcal{O}(X)$ is a U.F.D., there is a $f \in \mathcal{O}(X)$ irreducible such that the zero set of $f$ is the closed set $Im(g)$. Then $f$ is non-zero in the complement of $Im(g)$. So for any other line $h: \mathbb{A}^1 \to X$ with $Im(h) \neq Im(g)$, $f$ is everywhere non-zero along $Im(h)$ as $g$ is an isolated line. Hence $f$ must be constant along $Im(h)$, since $\mathbb{G}_m$ is $\mathbb{A}^1$-rigid (Remark \ref{Open-Closed}). Therefore $f$ is a non-trivial element in $\mathcal{O}_{ch}(X)$.
\end{enumerate}
\end{proof}
 Therefore, for a smooth affine surface $X$ with trivial Picard group, if  $\mathcal{O}_{ch}(X)$ is trivial, then all lines in $X$ cannot be parallel to each other (by ``all lines in $X$ are parallel to each other", we mean given any two lines $g_1, g_2 : \mathbb{A}^1  \to X$, we have $Im(g_1) \cap Im(g_2) = \emptyset$). Note that in $\mathbb{A}^1 \times \mathbb{G}_m$, any line parallel to $x$-axis is an isolated line and any polynomial of $y$ is in $\mathcal{O}_{ch}(\mathbb{A}^1 \times \mathbb{G}_m)$.
\begin{definition}
A chain connected component of $X$ is defined to be the largest subset of $X(k)$ such that any two points in it can be joined by a chain of $\mathbb{A}^1$-s.
\end{definition}
\begin{remark}
Let $T \subset X(k)$ be  a chain component. then T is the union of lines in $X$ such that the points in the images of the lines are in $T$.
\end{remark}
\begin{theorem} \label{ochtrivial}
Let $X$ be a smooth affine surface such that $\mathcal{O}(X)$ is a U.F.D. Then $\mathcal{O}_{ch}(X)$ is trivial if and only if there is some dense chain connected component of $X$.
\end{theorem}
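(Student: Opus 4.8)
The statement is a biconditional whose two implications are of very different character, so the plan is to treat them separately, proving the easy implication directly and the hard one by contraposition.

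The implication ``some chain component is dense $\Rightarrow \mathcal{O}_{ch}(X)=k$'' is the soft one. Given $f\in\mathcal{O}_{ch}(X)$, the function $f$ is by definition constant along the image of every non-constant $g\colon\mathbb{A}^1_k\to X$, hence takes a single value on any set of $k$-points connected by a chain of $\mathbb{A}^1$'s; in particular it is constant, say $f\equiv c$, on the assumed dense chain component $T$. Since $f-c$ is a regular function vanishing on the dense set $T$ and $X$ is irreducible (it is a U.F.D.), $f-c$ vanishes identically, so $f=c$. Thus $\mathcal{O}_{ch}(X)=k$.

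For the converse I would argue by contraposition: assuming no chain component is dense, I will produce a non-constant $f\in\mathcal{O}_{ch}(X)$. Let $L\subseteq X$ be the union of the images of all non-constant maps $\mathbb{A}^1_k\to X$. If $\overline L\neq X$ (in particular if $L=\varnothing$, where one even gets $\mathcal{O}_{ch}(X)=\mathcal{O}(X)$), then any nonzero $f$ in the ideal $I(\overline L)\subseteq\mathcal{O}(X)$ vanishes on every line and so lies in $\mathcal{O}_{ch}(X)$, and it is non-constant because it vanishes on the nonempty proper closed set $\overline L$. The essential case is $\overline L=X$, i.e. $X$ is dominated by images of $\mathbb{A}^1$. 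Here I would invoke the surface theory: by Remark \ref{dominated and uniruled} (after base changing to an uncountable algebraically closed field if necessary) $X$ is $\mathbb{A}^1$-uniruled, so $\bar{\kappa}(X)=-\infty$, and by Miyanishi--Sugie \cite{misu} a smooth affine surface with $\bar{\kappa}=-\infty$ carries an $\mathbb{A}^1$-fibration, i.e. a morphism $\pi\colon X\to B$ to a smooth curve whose general fibre is isomorphic to $\mathbb{A}^1_k$.

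The crux is then a clean dichotomy for lines relative to $\pi$. Every non-constant $g\colon\mathbb{A}^1_k\to X$ either has image contained in a single fibre of $\pi$, or $\pi\circ g$ is non-constant and hence dominant. In the second case, if $m$ is such a transversal line then $\pi(m)$ is dense in $B$; for general $b\in\pi(m)$ the fibre $F_b\cong\mathbb{A}^1_k$ is itself a line meeting $m$, so $F_b$ and $m$ lie in one chain component $T$. Consequently $T$ contains $F_b$ for a dense set of $b$, hence is dense in $X$, contradicting the hypothesis. Therefore every line lies in a fibre of $\pi$, and for any non-constant $h\in\mathcal{O}(B)$ the pullback $\pi^{*}h$ is constant along every line and thus is a non-constant element of $\mathcal{O}_{ch}(X)$, finishing the contrapositive. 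The main obstacle is precisely this last case: manufacturing a genuine \emph{global regular} function that contracts every line. My resolution routes through the $\mathbb{A}^1$-fibration furnished by $\bar{\kappa}=-\infty$ together with the observation that a single transversal line already forces a dense chain component; the points needing care are the reduction to an uncountable base field (so that ``dominated by images of $\mathbb{A}^1$'' upgrades to $\mathbb{A}^1$-uniruledness and $\bar{\kappa}=-\infty$) and the fact that $\pi$ is defined on all of $X$, so that $\pi^{*}h$ is regular rather than merely rational.
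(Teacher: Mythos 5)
Your forward implication and the easy half of your contrapositive (the case $\overline{L}\neq X$) are correct. The genuine gap is in your ``essential case'': you identify $\overline{L}=X$ with ``$X$ is dominated by images of $\mathbb{A}^1$'', but these are different conditions. Definition \ref{several A1}(1) requires a dense \emph{open} subset $U$ such that \emph{every} $k$-point of $U$ lies on a line, whereas $\overline{L}=X$ only says that the union of the images of all lines is Zariski dense. A countable union of pairwise distinct $\mathbb{A}^1$-curves is already Zariski dense in a surface, yet (over an uncountable field) it contains no nonempty open subset; so density of $L$ does not put you in a position to invoke Remark \ref{dominated and uniruled} (Keel--McKernan) and conclude $\bar{\kappa}(X)=-\infty$. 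Closing this hole would need a different and deeper input (e.g., finiteness of $\mathbb{A}^1$-curves on smooth affine surfaces with $\bar{\kappa}\geq 0$), which neither you nor the paper supplies. There are also two secondary points you wave at: the base $B$ of the Miyanishi--Sugie fibration must be \emph{affine} for $\mathcal{O}(B)$ to contain a non-constant function (this is part of the correct citation of \cite{misu}, not automatic), and the reduction to an uncountable field needs care because Definition \ref{several A1}(1) is a condition on $k$-points.

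A telling symptom is that your essential case never uses the hypothesis that $\mathcal{O}(X)$ is a U.F.D., which is exactly where the paper's (much more elementary) argument lives. The paper argues: if $\mathcal{O}_{ch}(X)=k$ and some chain component $T$ were a finite nonempty union of lines, then $T$ is closed of pure dimension $1$ (images of lines in an affine variety are closed, as in the proof of Lemma \ref{constant}(5)), so factoriality gives a single $f\in\mathcal{O}(X)$ with zero set exactly $T$; any line outside $T$ is \emph{disjoint} from $T$ (because $T$ is a chain component), so $f$ is nowhere vanishing along it, hence constant along it by $\mathbb{A}^1$-rigidity of $\mathbb{G}_m$ (Remark \ref{Open-Closed}), and $f$ vanishes along the lines in $T$; thus $f$ is a non-constant element of $\mathcal{O}_{ch}(X)$, a contradiction. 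Consequently every chain component containing a line consists of infinitely many lines, its closure is then $2$-dimensional, and since $X$ is irreducible that component is dense (and a line exists because $\mathcal{O}_{ch}(X)=k\neq\mathcal{O}(X)$). Your contrapositive can be repaired by replacing the entire Keel--McKernan/Miyanishi--Sugie detour with exactly this argument; as written, the structure-theory route is both unjustified at its key step and unnecessary.
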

\begin{proof}
Suppose there is some chain connected component of $X$ which is dense in $X$. Let $T$ be the union of all lines in that chain connected component and $f$ is in $\mathcal{O}_{ch}(X)$. The function $f$ is constant along $T$, since any two points of $T$ can be joined by chain of lines. But $T$ is dense in $X$. Therefore $f$ is constant. \par
           On the other hand, assume that $\mathcal{O}_{ch}(X)$ is trivial. If possible, there is a chain component (say $T$) which is the union of finitely many lines (i.e. finitely many distinct images). Then it is closed. There is some $f \in \mathcal{O}(X)$ such that its zero set is $T$. Then $f$ is non-zero along every other line outside $T$. Therefore it is constant along each line outside $T$. But $f$ is non-constant. This gives a contradiction. Therefore every chain connected component is a union of infinitely many lines (i.e. infinitely many distinct images). Choose any such chain connected component, say $S$. Its closure cannot be of dimension $1$, since it contains infinitely many lines, hence $S$ is dense in $X$.
\end{proof}



\textbf{Functoriality}

Suppose, $\alpha: Y \to X$ is a morphism of affine $k$-varieties. For an affine line $g$ in $Y$, $\alpha \circ g$ is an affine line in $X$. So $f \circ (\alpha \circ g)$ is constant if $f \in \mathcal{O}_{ch}(X)$. Thus the morphism $\alpha$ induces $\alpha ^{*}: \mathcal{O}(X) \to \mathcal{O}(Y)$ that restricts to a $k$-algebra homomorphism $\mathcal{O}_{ch}(X) \to \mathcal{O}_{ch}(Y)$. Therefore, $\mathcal{O}_{ch}(X)$ is functorial in $X$. 
\begin{proposition} \label{trivial}
Let $k$ be a field of characteristic $0$. Suppose $X$ is an affine $k$-variety. Then $\mathcal{O}_{ch}(X) \subset ML(X)$. Therefore, if $ML(X)$ is trivial then $\mathcal{O}_{ch}(X)$ is trivial.
\end{proposition}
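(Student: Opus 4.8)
The plan is to show that every $f \in \mathcal{O}_{ch}(X)$ lies in the kernel $\mathcal{O}(X)^D$ of an arbitrary locally nilpotent derivation $D \in LND_k(\mathcal{O}(X))$; since $ML(X) = \bigcap_{D} \mathcal{O}(X)^D$, this yields the inclusion $\mathcal{O}_{ch}(X) \subseteq ML(X)$. The bridge between derivations and affine lines is the correspondence between locally nilpotent $k$-derivations and algebraic $\mathbb{G}_a$-actions available in characteristic zero (Remark \ref{slice}(2)): a derivation $D$ integrates to an action $\phi \colon \mathbb{G}_a \times_k X \to X$ with $\phi_t^* = \exp(tD)$.

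First I would fix $f \in \mathcal{O}_{ch}(X)$ and $D \in LND_k(\mathcal{O}(X))$ with associated action $\phi$, and for each closed point $p \in X(k)$ consider the orbit morphism $g_p \colon \mathbb{A}^1_k \to X$, $t \mapsto \phi(t,p)$. If $p$ is not a fixed point of $\phi$, then $g_p$ is a non-constant morphism, i.e. a line in $X$; since $f \in \mathcal{O}_{ch}(X) \subseteq \mathcal{O}_{ch,g_p}(X)$, the composite $f \circ g_p$ is constant, hence equal to $f(p)$. If $p$ is a fixed point, then $\phi(t,p) = p$ for all $t$, so $f(\phi(t,p)) = f(p)$ trivially. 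In either case $f(\phi(t,p)) = f(p)$ for every $t \in k$ and every $p \in X(k)$.

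Next I would upgrade this pointwise identity to the scheme-theoretic equality $\phi^* f = \mathrm{pr}^* f$ in $\mathcal{O}(\mathbb{A}^1_k \times_k X)$, where $\mathrm{pr}$ is the projection. Here the algebraic closedness of $k$ (assumed throughout the section) enters: the two regular functions $\phi^* f$ and $\mathrm{pr}^* f$ agree on all $k$-points of $\mathbb{A}^1_k \times_k X$, which are dense, and $\mathcal{O}(\mathbb{A}^1_k \times_k X)$ is reduced, so they coincide. Thus $f$ is $\mathbb{G}_a$-invariant, which in characteristic zero is equivalent to $Df = 0$ (compare the coefficients of $t$ in $\phi_t^* f = \exp(tD)(f) = f$). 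As $D$ was arbitrary, $f \in ML(X)$, giving $\mathcal{O}_{ch}(X) \subseteq ML(X)$. The concluding assertion is then immediate: $\mathcal{O}_{ch}(X)$ is a $k$-subalgebra of $\mathcal{O}(X)$ (Lemma \ref{constant}(2)), so $k \subseteq \mathcal{O}_{ch}(X) \subseteq ML(X)$, whence $ML(X) = k$ forces $\mathcal{O}_{ch}(X) = k$.

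The only genuinely delicate step is the passage from orbitwise constancy to membership in $\ker D$. One must ensure that orbit maps through non-fixed points really are non-constant morphisms from $\mathbb{A}^1_k$ (this holds because in characteristic zero a nontrivial $\mathbb{G}_a$-orbit is isomorphic to $\mathbb{A}^1$, the stabilizer being a proper closed subgroup of $\mathbb{G}_a$ and hence trivial), and then invoke density of $k$-points together with reducedness to recover the algebraic identity $\phi^* f = \mathrm{pr}^* f$ from its validity at all closed points. Everything else is formal once the derivation-action dictionary is in place.
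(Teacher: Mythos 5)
Your proposal is correct and takes essentially the same approach as the paper: the paper's homomorphism $exp(D)\colon \mathcal{O}(X) \to \mathcal{O}(X)[T]$ is precisely the coaction of your $\mathbb{G}_a$-action $\phi$, its composition $k[T] \xrightarrow{T \mapsto f} \mathcal{O}(X) \xrightarrow{exp(D)} \mathcal{O}(X)[T] \xrightarrow{\mathrm{ev}_x} k[T]$ is the pullback of $f$ along your orbit map $g_x$, and the paper likewise concludes from $f \in \mathcal{O}_{ch}(X)$ that this is constant, then uses density of $k$-points (with $k$ algebraically closed, as assumed throughout the section) to get $exp(D)(f) = f$, hence $Df = 0$. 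You merely phrase geometrically, via orbit morphisms and $\mathbb{G}_a$-invariance, what the paper does algebraically with $exp(D)$; under the derivation/action dictionary the two arguments coincide.
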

\begin{proof}
Suppose $f \in \mathcal{O}_{ch}(X)$ and $D$ is a locally nilpotent $k$-derivation on $\mathcal{O}(X)$. We need to show that $f \in Ker(D)$. We have a $k$-algebra homomorphism $exp(D): \mathcal{O}(X) \to \mathcal{O}(X)[T]$ defined as $exp(D)(g)= \sum_{n=0}^{\infty} \frac{D^{n}(g)}{n!} T^{n}$. Fix $x \in X(k)$. Consider the following composition of $k$-algebra homomorphisms (this is precisely considering $f$ along each of the orbits of $x \in X(k)$): 
$$k[T] \xrightarrow{T \mapsto f} \mathcal{O}(X) \xrightarrow{exp(D)} \mathcal{O}(X)[T] \xrightarrow{\text{evaluation at } x} k[T]$$
This composition takes $T$ to an element of $k$ by Lemma \ref{constant}. Suppose, $exp(D)(f) = \sum_{n=0}^{\infty} f_n T^n \in \mathcal{O}(X)[T]$, where $f_n \in \mathcal{O}(X)$. Then for every $n \geq 1$, $f_n(x) = 0, \ \forall x \in X(k)$. Since $X(k)$ is dense in $X$, $f_n$-s are zero for every $n \geq 1$. Thus $exp(D)(f)=f$. Hence $f \in Ker(D)$.
\end{proof}
 \begin{proposition}
 $\mathcal{O}_{ch}(-)$ is homotopy invariant i.e. $\mathcal{O}_{ch}(X) = \mathcal{O}_{ch}(X \times \mathbb{A}^1_k)$ (both are $k$-subalgebras of $\mathcal{O}(X \times \mathbb{A}^1_k)$), $\forall X \in Sm/k$.
 \end{proposition}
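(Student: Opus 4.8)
The plan is to prove the two inclusions separately after identifying $\mathcal{O}(X \times \mathbb{A}^1_k)$ with the polynomial ring $\mathcal{O}(X)[t]$, where $\mathcal{O}(X)$ is viewed inside $\mathcal{O}(X)[t]$ via the pullback along the projection $\pi : X \times \mathbb{A}^1_k \to X$. The whole proof is formal once one observes that a line in $X \times \mathbb{A}^1_k$ projects to a (possibly constant) morphism into $X$, and that a function pulled back from $X$ is $t$-independent.

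For the inclusion $\mathcal{O}_{ch}(X) \subseteq \mathcal{O}_{ch}(X \times \mathbb{A}^1_k)$, I would take $f \in \mathcal{O}_{ch}(X)$, regarded as a $t$-independent function. Any line $G : \mathbb{A}^1_k \to X \times \mathbb{A}^1_k$ has the form $G = (G_1, G_2)$ with $G_1 : \mathbb{A}^1_k \to X$, and since $f$ is pulled back from $X$ one has $f \circ G = f \circ G_1$. If $G_1$ is constant this is trivially constant; otherwise $G_1$ is a line in $X$, and then $f \circ G_1$ is constant because $f \in \mathcal{O}_{ch}(X)$. Hence $f$ is constant along every line in $X \times \mathbb{A}^1_k$, so $f \in \mathcal{O}_{ch}(X \times \mathbb{A}^1_k)$.

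For the reverse inclusion $\mathcal{O}_{ch}(X \times \mathbb{A}^1_k) \subseteq \mathcal{O}_{ch}(X)$, I would take $F \in \mathcal{O}_{ch}(X \times \mathbb{A}^1_k)$ and write $F = \sum_i a_i t^i$ with $a_i \in \mathcal{O}(X)$. For each $x \in X(k)$ the vertical line $t \mapsto (x,t)$ is non-constant, so $F$ is constant along it, which forces $a_i(x) = 0$ for all $i \geq 1$. As $k = \bar{k}$, the $k$-points of $X$ are dense, so $a_i = 0$ for $i \geq 1$ and $F = a_0 \in \mathcal{O}(X)$. To conclude that $F \in \mathcal{O}_{ch}(X)$, for any line $g : \mathbb{A}^1_k \to X$ I would compose with the zero-section $i_0 : X \to X \times \mathbb{A}^1_k$; then $i_0 \circ g$ is a line in $X \times \mathbb{A}^1_k$, along which $F$ is constant, and $F \circ i_0 \circ g = F \circ g$ because $F$ is $t$-independent. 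Thus $F \circ g$ is constant and $F \in \mathcal{O}_{ch}(X)$.

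The only genuinely delicate step is the density argument in the second inclusion, which is exactly where the standing assumption that $k$ is algebraically closed enters; everything else is a direct unwinding of the definition of $\mathcal{O}_{ch}$ together with the projection/section bookkeeping between lines in $X$ and lines in $X \times \mathbb{A}^1_k$.
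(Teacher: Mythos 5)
Your proof is correct and follows essentially the same route as the paper: one inclusion comes from pulling back along the projection, and the reverse inclusion uses constancy along the vertical lines $t \mapsto (x,t)$ together with density of $k$-rational points, then the zero section $i_0$ to land back in $\mathcal{O}_{ch}(X)$. The only cosmetic difference is that you expand $F = \sum_i a_i t^i$ in coordinates and explicitly verify $F \circ i_0 \in \mathcal{O}_{ch}(X)$, a step the paper leaves to the previously established functoriality of $\mathcal{O}_{ch}(-)$.
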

 \begin{proof}
The projection map $p: X \times_{k} \mathbb{A}^1_k \to X$ induces an injective homomorphism $p^*: \mathcal{O}_{ch}(X) \to \mathcal{O}_{ch}(X \times \mathbb{A}^1_k)$. For the surjectivity, suppose $f \in \mathcal{O}_{ch}(X \times_k \mathbb{A}^1_k)$. For each $x \in X(k)$, consider the line $j_x: \mathbb{A}^1_k \to X \times_k \mathbb{A}^1_k$ defined as $t \mapsto (x,t)$. As $f \in \mathcal{O}_{ch}(X \times_k \mathbb{A}^1_k)$, $f \circ j_x$ is constant. Thus for every $x \in X(k)$ and $t \in \mathbb{A}^1_k(k)$, we have $f \circ i_0 \circ p(x, t) = f(x, t)$ (where $i_0: X \to X \times_k \mathbb{A}^1_k$ is the $0$-section). Since $X(k)$ is dense in $X$, $f = f \circ i_0 \circ p$. So $p^*(f \circ i_0) = f$, which proves the surjectivity of $p^*$.
 \end{proof}
 \begin{remark}
Makar-Limanov invariant is not homotopy invariant. If $X$ is a Koras-Russell threefold of the first kind over $\mathbb{C}$, then $ML(X)=\mathbb{C}[T]$ (\cite[Theorem 9.9]{fr}) and $ML(X \times \mathbb{A}^{1}_{\mathbb{C}})=\mathbb{C}$ (\cite[Section 1]{dou}). From Proposition \ref{trivial} and homotopy invariance of $\mathcal{O}_{ch}(X)$, we have $\mathcal{O}_{ch}(X)=\mathcal{O}_{ch}(X \times \mathbb{A}^{1}_{\mathbb{C}})=\mathbb{C}$.
\end{remark}
\begin{remark} \label{subalgebra}
For an affine variety $X \in Sm/k$, recall the sheaf $\mathcal{S}(X)$ of $\mathbb{A}^1$-chain connected components of $X$ is the coequalizer of two morphisms 
$$\begin{tikzcd}
\underline{Hom}(\mathbb{A}^1_k, X) \ar[r, shift left=.75ex, "\theta_0"] \ar[r, shift right=.75ex,swap, "\theta_1"] 
& X
\end{tikzcd}$$
in $Shv(Sm/k)$ (Remark \ref{propertiesS(X)}). There is a canonical isomorphism 
$$Hom_{Sch/k}(X, \mathbb{A}^1_k) \cong \mathcal{O}(X).$$ 
The natural epimorphism $\pi: X \to \mathcal{S}(X)$ induces a monomorphism 
  $$Hom_{Shv(Sm/k)}(\mathcal{S}(X), \mathbb{A}^1_k) \to Hom_{Sm/k}(X, \mathbb{A}^1_k).$$ 
This way we identify $Hom_{Shv(Sm/k)}(\mathcal{S}(X), \mathbb{A}^1_k)$ as a $k$-subalgebra of $\mathcal{O}(X)$. 
\end{remark} 
\begin{proposition} \label{reg}
As $k$-subalgebras of $O(X)$, we have $$\mathcal{O}_{ch}(X) = Hom_{Shv(Sm/k)}(\mathcal{S}(X), \mathbb{A}^1_k).$$
\end{proposition}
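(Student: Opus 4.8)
The plan is to read off both sides as subalgebras of $\mathcal{O}(X)$ and then compare them directly through the universal property of the coequalizer defining $\mathcal{S}(X)$. Since $\mathbb{A}^1_k$ is a Nisnevich sheaf and $\mathcal{S}(X)$ is the coequalizer in $Shv(Sm/k)$ of $\theta_0, \theta_1 : \underline{Hom}(\mathbb{A}^1_k, X) \rightrightarrows X$ (Remark \ref{propertiesS(X)}), a morphism $\mathcal{S}(X) \to \mathbb{A}^1_k$ is the same datum as a regular function $f \in \mathcal{O}(X) = Hom_{Sch/k}(X, \mathbb{A}^1_k)$ satisfying $f \circ \theta_0 = f \circ \theta_1$ as morphisms of sheaves $\underline{Hom}(\mathbb{A}^1_k, X) \to \mathbb{A}^1_k$. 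Under the identification of Remark \ref{subalgebra}, the subalgebra $Hom_{Shv(Sm/k)}(\mathcal{S}(X), \mathbb{A}^1_k) \subseteq \mathcal{O}(X)$ is precisely $\{ f \in \mathcal{O}(X) \mid f \circ \theta_0 = f \circ \theta_1 \}$, so it suffices to show this set equals $\mathcal{O}_{ch}(X)$.

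For the inclusion $Hom_{Shv(Sm/k)}(\mathcal{S}(X), \mathbb{A}^1_k) \subseteq \mathcal{O}_{ch}(X)$, I would feed the coequalizer relation a multiplication homotopy. Fix any line $g : \mathbb{A}^1_k \to X$, take $U = \mathbb{A}^1_k$, and set $H = g \circ m : \mathbb{A}^1_U \to X$ where $m : \mathbb{A}^1_k \times_k \mathbb{A}^1_k \to \mathbb{A}^1_k$ is the multiplication $(t,s) \mapsto ts$. Composing with the sections $i_0, i_1 : U \to \mathbb{A}^1_U$ gives $\theta_0(H) = H \circ i_0$ the constant map at $g(0)$ and $\theta_1(H) = H \circ i_1 = g$. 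Evaluating $f \circ \theta_0 = f \circ \theta_1$ at the $U$-point $H$ then reads $f \circ g = f(g(0))$ in $\mathcal{O}(\mathbb{A}^1_k)$, so $f \circ g$ is constant; as $g$ was arbitrary, $f \in \mathcal{O}_{ch}(X)$.

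For the reverse inclusion, suppose $f \in \mathcal{O}_{ch}(X)$. I must verify $f \circ \theta_0 = f \circ \theta_1$, an equality of sheaf morphisms that can be checked on $U$-sections for every $U \in Sm/k$. A $U$-section of $\underline{Hom}(\mathbb{A}^1_k, X)$ is a morphism $H : \mathbb{A}^1_U \to X$, and I need the two regular functions $f \circ (H \circ i_0)$ and $f \circ (H \circ i_1)$ on $U$ to agree. For each $k$-point $u \in U(k)$, restricting $H$ to $\mathbb{A}^1_k \times \{u\}$ produces a line $H_u : \mathbb{A}^1_k \to X$; since $f \in \mathcal{O}_{ch,H_u}(X)$, the composite $f \circ H_u$ is constant, whence $f(H(0,u)) = f(H(1,u))$. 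Thus the two functions on $U$ take equal values at every $k$-point.

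The only step that genuinely uses the standing hypotheses is the passage from pointwise agreement to equality of the two functions: because $k$ is algebraically closed and $U$ is a reduced finite-type $k$-scheme, the $k$-points are Zariski dense and the intersection of their maximal ideals is the nilradical $(0)$, so regular functions agreeing at all $k$-points coincide. This yields $f \circ (H \circ i_0) = f \circ (H \circ i_1)$ for every $U$ and every $H$, hence $f \circ \theta_0 = f \circ \theta_1$ and $f \in Hom_{Shv(Sm/k)}(\mathcal{S}(X), \mathbb{A}^1_k)$. I expect this density reduction to be the only delicate ingredient; the rest is a formal application of the coequalizer universal property together with the explicit multiplication homotopy.
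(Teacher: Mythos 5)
Your proof is correct and follows essentially the same route as the paper's: both directions rest on the coequalizer presentation of $\mathcal{S}(X)$, with the multiplication homotopy $(t,s)\mapsto g(ts)$ handling the inclusion $Hom_{Shv(Sm/k)}(\mathcal{S}(X),\mathbb{A}^1_k)\subseteq\mathcal{O}_{ch}(X)$ and density of $k$-points (using $k=\bar{k}$ and reducedness of $U$) handling the reverse inclusion. The only cosmetic difference is that you package the universal property upfront as the equalizing condition $f\circ\theta_0=f\circ\theta_1$, which the paper invokes implicitly.
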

\begin{proof}
Suppose $\phi \in \mathcal{O}_{ch}(X)$, $\phi$ gives a morphism $X$ to $\mathbb{A}^1_k$. We will show that $\phi \circ \theta_0 = \phi \circ \theta_1$ as morphisms from $\underline{Hom}(\mathbb{A}^1_k, X)$ to $\mathbb{A}^1_k$ i.e. to show that $\forall \ U \in Sm/k$ and $f: \mathbb{A}^1_U:= \mathbb{A}^1_k \times_k U \to X$, $\phi \circ f \circ \sigma_0 = \phi \circ f \circ \sigma_1$ where $\sigma_0, \sigma_1: U \to \mathbb{A}^1_U$ are the $0$-section and the $1$-section respectively. For any $x \in U(k)$, consider the morphism $i_x: \mathbb{A}^1_k \to \mathbb{A}^1_U$ defined as $t \mapsto (t,x)$. Composing $f$ with $i_x$, we get a morphism from $\mathbb{A}^1_k$ to $X$. Since $\phi \in \mathcal{O}_{ch}(X)$, $\phi \circ f \circ i_x$ is constant. Thus $\phi(f(\sigma_0(x))) = \phi(f(\sigma_1(x)))$. The $k$-points are dense in $U$, so $\phi \circ f \circ \sigma_0 = \phi \circ f \circ \sigma_1$. Therefore, $\phi$ induces a unique morphism of sheaves from $\mathcal{S}(X)$ to $\mathbb{A}^1_k$. Thus using the identification we have observed $Hom_{Shv(Sm/k)}(\mathcal{S}(X), \mathbb{A}^1_k)$ as $k$-subalgebra of $\mathcal{O}(X)$, we have $\phi \in Hom_{Shv(Sm/k)}(\mathcal{S}(X), \mathbb{A}^1_k)$. \par
Conversely suppose, $\eta \in Hom_{Shv(Sm/k)}(\mathcal{S}(X), \mathbb{A}^1_k)$ and $g :\mathbb{A}^1_k \to X$ is a morphism . Then $\eta \circ \pi$ gives a morphism from $X$ to $\mathbb{A}^1_k$.  The morphism $g$ is homotopic to the constant map. Indeed, there is a homotopy $H: \mathbb{A}^1_k \times \mathbb{A}^1_k \to X$ as the composition of $g$ with the multiplication map $\mathbb{A}^1_k \times \mathbb{A}^1_k \to \mathbb{A}^1_k$ ($(s,t) \mapsto st$). Then $H \sigma_0$ is the constant map and $H \sigma_1 = g$ , where $\sigma_0, \sigma_1: \mathbb{A}^1_k \to \mathbb{A}^1_k \times \mathbb{A}^1_k$ are the $0$-section and the $1$-section (where we put $0$ and $1$ in the first coordinate) respectively. Since $\mathcal{S}(X)$ is the coequaliser of $\theta_0$ and $\theta_1$ (Remark \ref{propertiesS(X)}, (1)), we have 
$$\eta \circ \pi \circ H \circ \sigma_0 = \eta \circ \pi \circ H \circ \sigma_1$$ 
So $\eta \circ \pi \circ g$ is constant. This implies $\eta \circ \pi \in  \mathcal{O}_{ch}(X)$.  Now using the identification in remark \ref{subalgebra} we get $\eta \in \mathcal{O}_{ch}(X)$.
\end{proof}
\begin{corollary}
If $X \in Sm/k$ is $\mathbb{A}^1$-chain connected, then $\mathcal{O}_{ch}(X)$ is trivial.
\end{corollary}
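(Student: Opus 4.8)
The plan is to deduce this immediately from Proposition \ref{reg}. In the notation of the paper, $X$ being $\mathbb{A}^1$-chain connected means precisely that its chain connected component sheaf $\mathcal{S}(X) = \pi_0^{ch}(X)$ is trivial, i.e. $\mathcal{S}(X) \cong Spec \ k$ as Nisnevich sheaves on $Sm/k$ (this parallels the definition of $\mathbb{A}^1$-connectedness via $\pi_0^{\mathbb{A}^1}(\mathcal{X}) \cong Spec \ k$). So the whole content is to transport this sheaf-level triviality through the identification of Proposition \ref{reg}.

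First I would invoke Proposition \ref{reg}, which identifies $\mathcal{O}_{ch}(X)$ with $Hom_{Shv(Sm/k)}(\mathcal{S}(X), \mathbb{A}^1_k)$ as $k$-subalgebras of $\mathcal{O}(X)$. Substituting $\mathcal{S}(X) \cong Spec \ k$ reduces the computation to $Hom_{Shv(Sm/k)}(Spec \ k, \mathbb{A}^1_k)$. Since $Spec \ k$ is the terminal object of $Shv(Sm/k)$ and is represented by the point, the Yoneda lemma gives $Hom_{Shv(Sm/k)}(Spec \ k, \mathbb{A}^1_k) \cong \mathbb{A}^1_k(Spec \ k) = \mathcal{O}(Spec \ k) = k$. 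Hence $\mathcal{O}_{ch}(X) \cong k$, which is exactly the assertion that $\mathcal{O}_{ch}(X)$ is trivial.

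There is essentially no obstacle in this route; the only point worth pinning down is the reading of $\mathbb{A}^1$-chain connectedness as the condition $\mathcal{S}(X) \cong Spec \ k$, after which the argument is a one-line consequence of Proposition \ref{reg} plus Yoneda. As a cross-check I could instead argue directly at the level of $k$-points, using the standing assumption that $k$ is algebraically closed: any $f \in \mathcal{O}_{ch}(X)$ is constant along every line in $X$, hence constant along every finite chain of lines, so chain connectedness forces $f$ to be constant on a dense subset of $X$ and therefore constant. I would still prefer the sheaf-theoretic route through Proposition \ref{reg}, since it packages the density and chain-joining bookkeeping into the already-established coequalizer description of $\mathcal{S}(X)$ and avoids re-examining field-of-definition issues.
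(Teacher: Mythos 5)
Your proof is correct and is essentially the paper's own (implicit) argument: the corollary appears immediately after Proposition \ref{reg} with no separate proof, the intended deduction being exactly yours — $\mathbb{A}^1$-chain connectedness gives $\mathcal{S}(X) \cong Spec \ k$, whence $\mathcal{O}_{ch}(X) \cong Hom_{Shv(Sm/k)}(Spec \ k, \mathbb{A}^1_k) = k$ by Yoneda. Your $k$-points cross-check (constancy along chains of lines plus density of $X(k)$ for $k$ algebraically closed) is also sound and has the merit of covering the weaker ``naive'' reading of chain connectedness that the paper invokes later for the Pham--Brieskorn surfaces.
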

\begin{remark}
Koras-Russell threefolds of the first kind are $\mathbb{A}^{1}$-chain connected \cite[Example 2.21]{dpo}. Therefore if $X$ is a Koras-Russell threefold of the first kind, then $\mathcal{O}_{ch}(X)$ is trivial. 
\end{remark}
So far we have observed that $\mathcal{O}_{ch}$, the presheaf of $k$-algebras on the category of affine varieties over $k$ is homotopy invariant and it is directly related to $\mathbb{A}^1$-chain-connected component sheaf (Proposition \ref{reg}). However $\mathcal{O}_{ch}$ is not representable in $\mathbf{H}(k)$. 
\begin{lemma} \label{not representable}
$\mathcal{O}_{ch}$ is not representable in $\mathbf{H}(k)$.
\end{lemma}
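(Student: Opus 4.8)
The plan is to argue by contradiction: suppose $\mathcal{O}_{ch}$ is representable, say there is a space $\mathcal{Y}$ and a natural isomorphism $\mathcal{O}_{ch}(X)\cong\Hom_{\mathbf{H}(k)}(X,\mathcal{Y})$ for affine $X\in Sm/k$, and then show that $\mathcal{O}_{ch}$ would be forced to satisfy Nisnevich (hence Zariski) descent, which I will contradict by an explicit local computation on $\mathbb{A}^2_k$. The key consequence of representability that I intend to violate is the following: since every $\mathbb{A}^1$-weak equivalence becomes invertible in $\mathbf{H}(k)$, and in particular the \v{C}ech map $U_\bullet\to X$ associated to a Zariski cover is an isomorphism in $\mathbf{H}(k)$ (Lemma \ref{cover} and the corollary following it), the functor $X\mapsto\Hom_{\mathbf{H}(k)}(X,\mathcal{Y})$ is governed by the $\mathbb{A}^1$-invariant Nisnevich sheaf $\pi_0^{\mathbb{A}^1}(\mathcal{Y})$. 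Concretely, a matching family of sections over a cover assembles into a map out of $U_\bullet$, hence a map out of $X$ in $\mathbf{H}(k)$; so an affirmative answer would make $\mathcal{O}_{ch}$ a Nisnevich sheaf on affine schemes, in which any family of local sections agreeing on overlaps descends to a global section.

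Next I would exhibit the failure of descent. Take $X=\mathbb{A}^2_k=\Spec\,k[x,y]$ with the cover $\{D(x),D(x-1)\}$, which is genuine because $\{x=0\}\cap\{x=1\}=\emptyset$. On $D(x)\cong\mathbb{G}_m\times\mathbb{A}^1$ the coordinate $x$ is invertible, so for any line $g\colon\mathbb{A}^1_k\to D(x)$ the composite $x\circ g$ is a unit of $k[t]$, i.e.\ a constant; thus $x\in\mathcal{O}_{ch}(D(x))$, and symmetrically $x\in\mathcal{O}_{ch}(D(x-1))$ since $x-1$ is invertible there. These two sections restrict to the same element $x$ on $D(x(x-1))$, so they form a matching family for the cover. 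On the other hand $\mathbb{A}^2_k$ is $\mathbb{A}^1$-chain connected, whence $\mathcal{O}_{ch}(\mathbb{A}^2_k)=k$, and no constant can restrict to the non-constant function $x$; indeed the line $t\mapsto(t,0)$ gives $x\circ g=t$, so already $x\notin\mathcal{O}_{ch}(\mathbb{A}^2_k)$ directly. Hence the matching family does not glue, $\mathcal{O}_{ch}$ is not a Zariski sheaf, and this contradicts the descent property forced above.

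The main obstacle is precisely the first step: making rigorous that representability in $\mathbf{H}(k)$ forces Nisnevich descent for the set-valued functor $\mathcal{O}_{ch}$. The subtlety is that the unsheafified presheaf $X\mapsto\Hom_{\mathbf{H}(k)}(X,\mathcal{Y})$ need not literally be a sheaf: its sheafification is $\pi_0^{\mathbb{A}^1}(\mathcal{Y})$, and passing from the homotopy limit over the \v{C}ech nerve to $\pi_0$ could a priori destroy gluing through higher-homotopy ($\pi_1$) obstructions. I would close this gap by exploiting that $\mathcal{O}_{ch}$ is \emph{already} homotopy invariant and separated (it is a subpresheaf of $\mathcal{O}=\mathbb{G}_a$), so that its comparison with the associated $\mathbb{A}^1$-invariant sheaf is controlled by the results of Section 2, in particular Corollary \ref{comparison}; the genuine discrepancy located at $\mathbb{A}^2_k$, between $\mathcal{O}_{ch}(\mathbb{A}^2_k)=k$ and the value of the associated sheaf (which contains the glued class of $x$), is then exactly the obstruction to representability.

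A conceptually cleaner packaging of the same contradiction, which I would use to double-check the argument, runs through $\mathbb{A}^1$-connectedness. For an $\mathbb{A}^1$-connected $X$ the sheaf $\mathcal{L}(X)$ is trivial (this is the input used in the proof of Theorem \ref{main}), so every $\mathbb{A}^1$-invariant sheaf takes on $X$ the same value as on $\Spec\,k$; consequently, were $\mathcal{O}_{ch}$ represented by an $\mathbb{A}^1$-invariant sheaf it would be trivial on every $\mathbb{A}^1$-connected affine variety. The computation above is the local manifestation of this: $D(x)$ and $D(x-1)$ are the pieces on which the would-be representing sheaf must collapse $x$ to a constant, while on the $\mathbb{A}^1$-chain connected $\mathbb{A}^2_k$ the two collapses are incompatible, so no single space $\mathcal{Y}$ in $\mathbf{H}(k)$ can represent $\mathcal{O}_{ch}$.
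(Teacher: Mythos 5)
Your local computation is sound, and your overall shape (contradict a gluing property via a two-open Zariski cover) is essentially the paper's: the paper covers $\mathbb{A}^1_k$ by $\mathbb{A}^1_k\setminus\{0\}$ and $\mathbb{A}^1_k\setminus\{1\}$, where $\mathcal{O}_{ch}=\mathcal{O}$ by $\mathbb{A}^1$-rigidity, and shows the section $t$ cannot lift to $\mathcal{O}_{ch}(\mathbb{A}^1_k)=k$; you cover $\mathbb{A}^2_k$ by $D(x),D(x-1)$ and use that $x$ is forced into $\mathcal{O}_{ch}$ locally because $x$, resp.\ $x-1$, is a unit there. The genuine gap is exactly the step you flag as the main obstacle, and your proposed repair does not close it. Your claim that representability would make $\mathcal{O}_{ch}$ ``a Nisnevich sheaf'' is false: for a fibrant $\mathcal{Y}$ the presheaf $X\mapsto\Hom_{\mathbf{H}(k)}(X,\mathcal{Y})=\pi_0(\mathcal{Y}(X))$ is in general neither separated nor a sheaf, and the $\pi_1$-phenomenon you worry about obstructs precisely \emph{injectivity} of the comparison with the fibre product. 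What representability does give---and all your example needs---is the \emph{surjectivity} half, and this is unobstructed: an $\mathbb{A}^1$-fibrant object is Nisnevich fibrant, hence satisfies the Brown--Gersten property (Definition \ref{bg}), so the square of simplicial sets attached to the elementary distinguished square (Definition \ref{cd}) determined by $D(x),D(x-1)$ is homotopy cartesian, and $\pi_0$ of a homotopy pullback of simplicial sets always surjects onto the fibre product of the $\pi_0$'s (a matching pair of vertices together with any path joining their images is a vertex of the homotopy pullback). This is exactly Lemma \ref{glu}, which is the lemma the paper's own proof invokes, and it makes your fear that ``passing to $\pi_0$ could destroy gluing'' groundless for a two-open cover.

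By contrast, the repair you sketch is a non sequitur: Corollary \ref{comparison} compares $\pi_0^{\mathbb{A}^1}(\mathcal{X})$ with $\mathcal{L}(\pi_0^{\mathbb{A}^1}(\mathcal{X}))$ on sections over finitely generated \emph{field} extensions and says nothing about gluing sections over $\mathbb{A}^2_k$; separatedness of $\mathcal{O}_{ch}$ inside $\mathcal{O}$ plays no role in any version of the argument. The ``cleaner packaging'' in your last paragraph also does not prove the lemma as stated: it assumes the representing object is an $\mathbb{A}^1$-invariant Nisnevich sheaf, a strictly stronger hypothesis than representability in $\mathbf{H}(k)$ (whether $\pi_0^{\mathbb{A}^1}(\mathcal{Y})$ is $\mathbb{A}^1$-invariant is Morel's conjecture, open in general), and even granting it the claimed incompatibility of the ``two collapses'' is never actually derived---note that $\mathcal{O}_{ch}(\mathbb{A}^2_k)=k$ is perfectly consistent with triviality on $\mathbb{A}^1$-connected varieties, so the contradiction must come from the cover, not from $\mathbb{A}^2_k$ alone. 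The correct fix is to delete both the sheaf-condition claim and the appeal to Corollary \ref{comparison}, and to cite Lemma \ref{glu} once: if $\mathcal{O}_{ch}$ were the $\pi_0$-presheaf of an $\mathbb{A}^1$-fibrant object, the map
$$\mathcal{O}_{ch}(\mathbb{A}^2_k)\longrightarrow \mathcal{O}_{ch}(D(x))\times_{\mathcal{O}_{ch}(D(x(x-1)))}\mathcal{O}_{ch}(D(x-1))$$
would be surjective, but the element $(x,x)$ of the target is not hit, since $\mathcal{O}_{ch}(\mathbb{A}^2_k)=k$. With that substitution your proof is correct and runs parallel to the paper's, with $\mathbb{A}^2_k$ in place of $\mathbb{A}^1_k$.
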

\begin{proof}
If possible, $\mathcal{O}_{ch}$ is given by $\mathbb{A}^1$-connected component presheaf of some $\mathbb{A}^1$-fibrant object $\mathcal{X}$. Then $\mathcal{O}_{ch}$ satisfies the gluing property by Lemma \ref{glu}, for any elementary distinguished square as in \ref{cd}. Suppose, $X$ is the affine line $\mathbb{A}^1_k$ and the elementary distinguished square [Definition \ref{cd}] is given by Zariski open covering $U=\mathbb{A}^1_k - \{0\}$ and $V=\mathbb{A}^1_k - \{1\}$. Then both $U$ and $V$ contain no affine lines so, $\mathcal{O}_{ch}(U)=\mathcal{O}(U)$ and $\mathcal{O}_{ch}(V)=\mathcal{O}(V)$. So $\mathcal{O}(U) \times_{\mathcal{O}(U \cap V)} \mathcal{O}(V) = \mathcal{O}(\mathbb{A}^1_k)$. But $\mathcal{O}_{ch}(\mathbb{A}^1_k)$ is trivial. So the map $\mathcal{O}_{ch}(\mathbb{A}^1_k)$ to $\mathcal{O}_{ch}(U) \times_{\mathcal{O}_{ch}(U \cap V)} \mathcal{O}_{ch}(V)$ is not surjective. Hence $\mathcal{O}_{ch}$ does not satisfy the gluing property.
\end{proof} 
\begin{remark}
For an affine variety $X \in Sm/k$, let $X_{Nis}$ be the small Nisnevich site. Then $\mathcal{O}_{ch}|_{X_{Nis}}$ is not a sheaf whenever $\mathcal{O}_{ch}(X) \neq \mathcal{O}(X)$ by Lemma \ref{Local Nature}. If $\mathcal{O}_{ch}(X) = \mathcal{O}(X)$, then $\mathcal{O}_{ch}|_{X_{Nis}}$ is a sheaf.
\end{remark}
\begin{question}
Let $X \in Sm/k$ be an affine surface such that $\mathcal{O}(X)$ is a U.F.D. Suppose $\mathcal{O}_{ch}(X)$ is trivial. Is $X \cong \mathbb{A}^2_k$? 
\end{question}
\begin{remark}
If $X$ is a smooth affine surface with $\mathcal{O}(X)$ is a U.F.D. and $\mathcal{O}_{ch}(X)$ is trivial, then $\mathcal{O}(X)^* = k^*$ (by Lemma \ref{constant}, (4)) and by Theorem \ref{ochtrivial}, there is $T \subset X$ dense in $X$ such that for each $x \in T(k)$ there is a non-constant morphism $g: \mathbb{A}^1_k \to X$ such that $x \in Im(g)$. But from this, we cannot conclude that $X$ is dominated by images of $\mathbb{A}^1$ (Definition \ref{several A1}) which ensures the negativity of logarithmic Kodaira dimension of $X$. \par
            However, there are singular affine surfaces with $\mathcal{O}(X)$ is U.F.D and $\mathcal{O}_{ch}(X)$ is trivial. Consider, for instance the singular Pham-Brieskorn surfaces $X_{p,q,r}= \{x^p + y^q+z^r = 0\} \subset \mathbb{A}^3_k$, where $p, q, r \geq 2$ are pairwise relatively prime integers. Then each $X_{p, q, r}$ is factorial \cite[Section 4, Example (c)]{samuel}. On the other hand, since $X_{p, q, r}$ is the affine cone over the closed curve $C_{p, q, r} =  \{x^p + y^q+z^r = 0\}$  in the weighted projective space $\mathbb{P}(m/p, m/q, m/r)$ where $m = lcm(p, q, r)$, it is $\mathbb{A}^1$-chain connected in the naive sense that any two $k$-points of $X_{p, q, r}$ can be connected by finitely many non-constant images of $\mathbb{A}^1_k$, which implies $\mathcal{O}_{ch}(X_{p, q, r}) = k$.
\end{remark}
\begin{remark} \label{iterated ochain}
We have seen $\mathcal{O}_{ch}(X)$ as the regular functions on $\mathbb{A}^1$-chain connected components of $X$ [Proposition \ref{reg}]. We define, $$\mathcal{O}_{ch}^{(n)}(X) := Hom_{Shv(Sm/k)}(\mathcal{S}^n(X), \mathbb{A}^1_k).$$ Then $\mathcal{O}_{ch}^{(n)}(X)$ is a $k$-subalgebra of $\mathcal{O}(X)$ and $\mathcal{O}_{ch}^{(n+1)}(X) \subset \mathcal{O}_{ch}^{(n)}(X)$ (since there is an epimorphism from $\mathcal{S}^n(X)$ to $\mathcal{S}^{n+1}(X)$). Thus inside $\mathcal{O}(X)$, we have a decreasing chain of $k$-subalgebras, not necessarily Noetherian. Does the above chain of $k$-subalgebras stabilize?
\end{remark}
\begin{remark}
Suppose $X \in Sm/k$ is $\mathbb{A}^1$-connected. Then there is an $n$ such that $\mathcal{S}^n(X)$ is trivial. Indeed since $\mathcal{L}(X)$ is trivial, the identity map on $X$ and the constant map given by a $k$-rational point are the same in $\mathcal{S}^n(X)$ for some $n$. Thus any map from $Spec \ \mathcal{O}$ ($\mathcal{O}$ is a Henselian local ring) to $X$ is the same as the constant map in $\mathcal{S}^n(X)$. Therefore the chain $\{\mathcal{O}_{ch}^{(n)}(X)\}_n$ in Remark \ref{iterated ochain} stabilizes for some $n$.
\end{remark}
\begin{question}
Given an affine variety $X$, define $X_i$ inductively as follows: $X_0 = X$ and $X_i = Spec(\mathcal{O}_{ch}(X_{i-1}))$. What is the relation between $X_i$ and the spectrum of $\mathcal{O}_{ch}^{(i)}(X)$? \par
  There is a canonical map from $X_i$ to $X_{i+1}$ for every $i$. Note that if $X$ does not have any non-constant $\mathbb{A}^1_k$, then $\mathcal{O}_{ch}(X) = \mathcal{O}(X)$. Does there exist some $n$ such that $X_n$ has no non-constant $\mathbb{A}^1$?
\end{question}

\section{Appendix} \label{app}

  Denote the category of presheaves of sets on $Sm/k$ by $PSh(Sm/k)$ and the category of presheaves of simplicial sets on $Sm/k$ by $\bigtriangleup^{op} PSh(Sm/k)$. We will call it the category of spaces. The left Bousfield localisation of the global projective model structure \cite[Section 2]{dhi} on $\bigtriangleup^{op} PSh(Sm/k)$ with respect to the Nisnevich hypercovers gives Nisnevich local projective model structure \cite[Section 6]{dhi}.  Following \cite[Section 3.2]{mv}, we consider the left Bousfield localisation of the Nisnevich local projective model structure on $\bigtriangleup^{op} PSh(Sm/k)$ with respect to the class of maps $X \times \mathbb{A}^1 \to X$ for all smooth schemes $X$. The resulting model structure is called the unstable $\mathbb{A}^1$- model structure and the resulting homotopy category is denoted by $\mathbf{H}(k)$. The following are some classes of maps which are invertible in $\mathbf{H}(k)$,  :

\begin{enumerate}
\item A morphism $f : \mathcal{X} \to \mathcal{Y} \in \bigtriangleup^{op} PSh(Sm/k)$  such that 
the induced morphisms on the stalks (for the Nisnevich topology) are weak equivalences of simplicial sets.

\item The projection morphism $pr : \mathcal{X} \times \mathbb{A}^1 \to \mathcal{X}$ for any $\mathcal{X} \in  \bigtriangleup^{op} PSh(Sm/k)$.

\item The structure map of any vector bundle $\mathcal{V} \to X$, for $X \in Sm/k$.
\end{enumerate}

Computation of the unstable motivic invariants of space $\mathcal{X}$  requires a  $\mathbb{A}^1$-fibrant model of $\mathcal{X}$, i.e $i : \mathcal{X} \to Ex^{\mathbb{A}^1}(\mathcal{X})$ such that $i$ is an $\mathbb{A}^1$-weak equivalence and $ Ex^{\mathbb{A}^1}(\mathcal{X})$ Nisnevich fibrant and $\mathbb{A}^1$-local. 
\begin{definition} \label{cd}
An elementary distinguished square (in the Nisnevich topology) is a cartesian square in $Sm/k$ of the form
$$\begin{tikzcd}
U \times_X V \ar[r] \ar[d] &V \ar[d, "p"]\\
U \ar[r,"j"] &X
\end{tikzcd}$$
such that $p$ is an \'etale morphism, $j$ is an open embedding and $p^{-1}(X-U) \to (X-U)$ is an isomorphism (we put the reduced induced structure on the corresponding closed sets) \cite[Definition 3.1.3]{mv}.
\end{definition}
See \cite[Chapter 3]{mv} for more details.
\begin{definition} \label{bg}
A simplicial presheaf $\mathcal{X}$ on $Sm/k$  is said to satisfy the Nisnevich Brown–Gersten property if for any elementary distinguished square in the Nisnevich topology as in Definition \ref{cd}, the induced square of simplicial sets 
$$\begin{tikzcd}
\mathcal{X}(X) \ar[r] \ar[d] &\mathcal{X}(V) \ar[d]\\
\mathcal{X}(U) \ar[r] &\mathcal{X}(U \times_X V)
\end{tikzcd}$$
is homotopy cartesian \cite[Definition 3.1.13]{mv}.
\end{definition}
\begin{remark}
Any fibrant space for the Nisnevich local model structure satisfies the Nisnevich Brown–Gersten property \cite[Remark 3.1.15]{mv}. 
Any $\mathbb{A}^1$-fibrant space satisfies Nisnevich Brown-Gersten property.
\end{remark}



The next lemma shows that the connected component presheaf of a Nisnevich fibrant object satisfies the gluing property for any elementary distinguished square.
\begin{lemma} \label{glu}
Suppose $\mathcal{X}$ is a Nisnevich fibrant object. Then for any elementary distinguished square as in \ref{cd}, the induced map $$\pi_0(\mathcal{X}(X)) \to \pi_0(\mathcal{X}(U)) \times_{\pi_0(\mathcal{X}(U \times_X V))} \pi_0(\mathcal{X}(V)) $$ is surjective.
\end{lemma}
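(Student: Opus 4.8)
The plan is to deduce the statement from the Nisnevich Brown--Gersten property together with the standard behaviour of $\pi_0$ on homotopy cartesian squares of simplicial sets. First I would invoke fibrancy: since $\mathcal{X}$ is Nisnevich fibrant, by the Remark preceding the lemma it satisfies the Nisnevich Brown--Gersten property (Definition \ref{bg}). Applied to the given elementary distinguished square (Definition \ref{cd}), this tells us precisely that the square of simplicial sets
$$\begin{tikzcd}
\mathcal{X}(X) \ar[r] \ar[d] &\mathcal{X}(V) \ar[d]\\
\mathcal{X}(U) \ar[r] &\mathcal{X}(U \times_X V)
\end{tikzcd}$$
is homotopy cartesian. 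So the whole problem reduces to a purely simplicial assertion about $\pi_0$ of a homotopy pullback.

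The key step is then the following general fact, which I would record and use: for any homotopy cartesian square of simplicial sets the canonical map from $\pi_0$ of the initial vertex to the fibre product of the $\pi_0$'s of the two intermediate vertices over $\pi_0$ of the final vertex is surjective. To prove it, I would use an explicit path-space model of the homotopy pullback $P := \mathcal{X}(U) \times^h_{\mathcal{X}(U \times_X V)} \mathcal{X}(V)$, whose vertices are triples $(a,b,\gamma)$ with $a \in \mathcal{X}(U)$, $b \in \mathcal{X}(V)$, and $\gamma$ a path in $\mathcal{X}(U\times_X V)$ joining the images of $a$ and $b$. Given a class $([a],[b])$ in $\pi_0(\mathcal{X}(U)) \times_{\pi_0(\mathcal{X}(U\times_X V))} \pi_0(\mathcal{X}(V))$, compatibility means the images of $a$ and $b$ lie in the same path component of $\mathcal{X}(U\times_X V)$, so some such $\gamma$ exists; the resulting vertex $(a,b,\gamma)$ shows that $\pi_0(P) \to \pi_0(\mathcal{X}(U)) \times_{\pi_0(\mathcal{X}(U\times_X V))} \pi_0(\mathcal{X}(V))$ is surjective.

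To finish, I would combine the two steps: because the Brown--Gersten square is homotopy cartesian, the comparison map $\mathcal{X}(X) \to P$ is a weak equivalence and hence induces a bijection on $\pi_0$. Composing the inverse of this bijection with the surjection of the previous paragraph yields that
$$\pi_0(\mathcal{X}(X)) \to \pi_0(\mathcal{X}(U)) \times_{\pi_0(\mathcal{X}(U \times_X V))} \pi_0(\mathcal{X}(V))$$
is surjective, as claimed (the roles of $U$ and $V$ being symmetric, the ordering of the factors is immaterial).

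I expect no deep obstacle here; the only points requiring care are making the path-space model of the homotopy pullback legitimate and ensuring that homotopy cartesianness is detected correctly, both of which are guaranteed by the fibrancy of $\mathcal{X}$ that underlies the Brown--Gersten property. It is worth emphasising that this argument only produces surjectivity and not injectivity: choosing a connecting path $\gamma$ involves a genuine choice, and distinct choices need not be realised by a single section over $X$, which is exactly why the statement is phrased as surjectivity alone.
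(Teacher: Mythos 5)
Your proof is correct, and it is essentially the standard argument that the paper itself outsources: the paper's ``proof'' of Lemma \ref{glu} is just a citation of \cite[Lemma 2.2]{c}, whose content is exactly your two steps (Nisnevich fibrancy gives the Brown--Gersten homotopy cartesian square, and $\pi_0$ of a homotopy pullback of Kan complexes surjects onto the fibre product of $\pi_0$'s via the path-space model). Your care points are also the right ones: sectionwise Kan-ness, which follows from Nisnevich fibrancy in the projective local structure, is what legitimises both the path-space model and the rectification of a zig-zag of edges into a single path $\gamma$, and your closing remark correctly identifies why only surjectivity (not injectivity) can be expected.
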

\begin{proof}
\cite[Lemma 2.2]{c}.
\end{proof}

\end{document}